\theoremstyle{plain}
\newtheorem{thm}{Theorem}[section] 
\newtheorem{lem}[thm]{Lemma}
\newtheorem{prop}[thm]{Proposition}
\newtheorem{cor}[thm]{Corollary}
\theoremstyle{definition}
\newtheorem{defn}[thm]{Definition} 
\newtheorem{remark}[thm]{Remark}
\definecolor{mygreen}{RGB}{28,172,0} 
\definecolor{mylilas}{RGB}{170,55,241}
\definecolor{mygray}{gray}{0.95}
\newcommand{\mscr}[1]{\mathscr{#1}}
\newcommand{\twid}[1]{\widetilde{#1}}
\newcommand{\ZZ}{\mathbb{Z}}
\newcommand{\RR}{\mathbb{R}}
\newcommand{\NN}{\mathbb{N}}
\newcommand{\EE}{\mathbb{E}}
\newcommand{\al}{\alpha}
\newcommand{\ga}{\gamma}
\newcommand{\de}{\delta}
\newcommand{\ep}{\epsilon}
\newcommand{\ta}{\theta}
\newcommand{\Ta}{\Theta}
\newcommand{\sa}{\sigma}
\renewcommand{\l}{\left}
\renewcommand{\r}{\right}
\newcommand{\defeq}{\vcentcolon=}
 \newcommand{\iid}{\overset{\text{iid}}{\sim}}
\DeclareMathOperator{\var}{Var}
\DeclareMathOperator{\floor}{floor}
\DeclareMathOperator{\ceil}{ceil}
\newcommand{\ul}{\underline}
\newcommand{\ds}{\displaystyle}
\title{Differentially Private Inference for Binomial Data}
\author{Jordan Awan \& Aleksandra Slavkovi\'c}
\author{
  Jordan Awan\\
  Department of Statistics\\
  Penn State University\\
  University Park, PA 16802 \\
  \texttt{awan@psu.edu} 
  \and
  Aleksandra Slavkovi\'c\\
  Department of Statistics\\
  Penn State University\\
  University Park, PA 16802 \\
  \texttt{sesa@psu.edu} \\ 
}
\date{}
\begin{document}

\maketitle

\begin{abstract}
  We derive uniformly most powerful (UMP)  tests for simple and one-sided hypotheses for a population proportion within the framework of Differential Privacy (DP), optimizing finite sample performance. We show that in general,  DP hypothesis tests can be written in terms of linear constraints, and for exchangeable data can always be expressed as a function of the empirical distribution. Using this structure, we prove a `Neyman-Pearson lemma' for binomial data under DP, where the DP-UMP only depends on the sample sum. Our tests can also be stated as a post-processing of a random variable, whose distribution we coin ``Truncated-Uniform-Laplace'' (Tulap), a generalization of the Staircase and discrete Laplace distributions.   Furthermore, we obtain exact $p$-values, which are easily computed in terms of the Tulap random variable.

  Using the above techniques, we show that our tests can be applied to give uniformly most accurate one-sided confidence intervals and optimal confidence distributions. We also derive uniformly most powerful unbiased (UMPU) two-sided tests, which lead to uniformly most accurate unbiased (UMAU) two-sided confidence intervals. We  show that our results can be applied to distribution-free hypothesis tests for continuous data. Our simulation results demonstrate that all our tests have exact type I error, and are more powerful than current techniques.
\end{abstract}

\noindent{\bf Keywords}: Bernoulli, Hypothesis Test, Confidence Interval, Frequentist, Statistical Disclosure Control, Neyman-Pearson, Confidence Distribution
\pagebreak
\tableofcontents
\pagebreak
\section{Introduction}\label{Introduction}

Differential Privacy (DP), introduced by \citet{Dwork2006:Sensitivity}, offers a rigorous measure of disclosure risk and more broadly, a formal privacy framework.  To satisfy DP, a procedure cannot be a deterministic function of the sensitive data, but must incorporate additional randomness, beyond sampling. Subject to the DP constraint, it is natural to search for a procedure which maximizes the utility of the output. Many works address the goal of minimizing the distance between the outputs of the randomized DP procedure and standard non-private algorithms, but few attempt to  infer properties about the underlying population (for notable exceptions, see related work), which is typically the goal in statistics and scientific research. 
In this paper, we focus on the setting where each individual contributes a sensitive binary value, and we wish to infer the population proportion via hypothesis tests and confidence intervals, subject to DP.
In particular, our main results are focused on deriving {\em uniformly most powerful} (UMP) and {\em uniformly most powerful unbiased} (UMPU) tests, related p-values, and confidence intervals, which optimize finite sample performance. While these tests are designed for binary data, we also show that they can be used to construct certain hypothesis tests for continuous data as well.

UMP tests are fundamental to classical statistics, being closely linked to  sufficiency, likelihood inference, and confidence sets. However, finding UMP tests can be hard and in many cases they do not even exist (see \citealp[Section 4.4]{Schervish1996}). Our results are the first to achieve  UMP tests under $(\epsilon,\delta)-$DP, and are among the first steps towards a general theory of optimal inference under DP. 

{\bfseries \large  Related work} \citet{Vu2009} were the first to perform classical hypothesis tests under DP. They develop private tests for population proportions as well as for independence in $2\times 2$ contingency tables. In both settings, they fix the noise adding distribution, and use approximate sampling distributions to perform these DP tests. A similar approach is used by \citet{Solea2014} to develop tests for normally distributed data. The work of \citet{Vu2009} is extended by \citet{Wang2015:Revisiting} and \citet{Gaboardi2016}, developing additional tests for multinomial data. To implement their tests, \citet{Wang2015:Revisiting} develop asymptotic sampling distributions, verifying via simulations that the type I errors are reliable.  On the other hand, \citet{Gaboardi2016} use simulations to compute an empirical type I error. \citet{Uhler2013} develop DP chi-squared tests and $p$-values for GWAS data, and derive the exact sampling distribution of their noisy statistic. Working under ``Local Differential Privacy,'' a stronger notion of privacy than DP, \citet{Gaboardi2018} develop multinomial tests based on asymptotic distributions. Given a DP output, \citet{Sheffet2017} and \citet{Barrientos2017} develop significance tests for regression coefficients. Following a common strategy in the field of Statistics, \citet{Wang2018} develop approximating distributions for DP statistics, which can be used to construct hypothesis tests and confidence intervals. In a recent work, \citet{Canonne2018} show that for simple hypothesis tests, a DP test based on a clamped likelihood ratio test achieves optimal sample complexity.


Outside the hypothesis testing setting, there is some additional work on optimal population inference under DP. \citet{Duchi2017} give general techniques to derive minimax rates under local DP, and in particular give minimax optimal point estimates for the mean, median, generalized linear models, and nonparametric density estimation.  \citet{Karwa2017} develop nearly optimal confidence intervals for normally distributed data with finite sample guarantees, which could potentially be inverted to give approximately UMP unbiased tests.

Related work on developing optimal DP mechanisms for general loss functions such as \citet{Geng2016} and \citet{Ghosh2009}, give mechanisms that optimize  symmetric convex loss functions, centered at a real-valued statistic.
Similarly, \citet{Awan2019:Structure} derive optimal mechanisms  among the class of $K$-Norm Mechanisms.


{\bfseries \large Our contributions}
The previous literature on DP hypothesis testing has a few characteristics in common: 1) nearly all of these  proposed methods first add noise to the data, and perform their test as a post-processing procedure, 2) all of the hypothesis tests use either asymptotic distributions or simulations to derive approximate decision rules, and 3) while each procedure is derived intuitively based on classical theory, none show that they are optimal among all possible DP algorithms.

In contrast, in this paper we search over all DP hypothesis tests at level $\al$, deriving the \emph{uniformly most powerful} (UMP) test for a population proportion. 
We find that our DP-UMP test can be stated as a post-processing of a noisy statistic, which allows us to efficiently compute exact $p$-values, confidence intervals, and confidence distributions as post-processing.

Sections \ref{Background}-\ref{pValues} appeared in an earlier version of this work \citet{Awan2018:Binomial}, and focus on developing DP-UMP simple and one-sided tests for binomial data. In Section \ref{Setup},  we show that arbitrary DP hypothesis tests, which report `Reject' or `Fail to Reject', can be written in terms of linear inequalities. In Theorem \ref{SuffStatThm}, we show that for exchangeable data, DP tests need only depend on the empirical distribution. We use this structure to find closed-form DP-UMP tests for simple hypotheses in Theorems
\ref{UMP1} and \ref{UMP2}, and extend these results to obtain one-sided DP-UMP tests in Corollary \ref{OneSide1}. These tests are closely tied to our proposed ``\emph{Truncated-Uniform-Laplace}'' (\emph{Tulap}) distribution, which  extends both the discrete Laplace distribution (studied in \citet{Ghosh2009}), and the Staircase distribution of \citet{Geng2016} to the setting of $(\ep,\de)$-DP. We prove that the Tulap distribution satisfies $(\ep,\de)$-DP in Theorem \ref{TulapDP}. While the tests developed in the previous sections only result in the output `Reject' or `Fail to Reject', in Section \ref{pValues}, we show that our DP-UMP tests can be stated as a post-processing of a Tulap random variable. From this formulation, we obtain exact $p$-values via Theorem \ref{PValueResult} and Algorithm \ref{PValueAlgorithm} which agree with our DP-UMP tests.  
In fact, since releasing the Tulap summary statistic satisfies $(\ep,\de)$-DP, we can also produce two-sided $p$-values, confidence intervals, and confidence distributions all in terms of the private summary statistic, thus offering a comprehensive DP statistical analysis for binomial data.

To go beyond the simple tests and one-sided hypothesis results of \citet{Awan2018:Binomial}, we use a Bonferroni correction for multiple comparisons and the one-sided DP-UMP tests to construct two-sided tests, which we detail in Section \ref{s:Bonferroni}. In Section \ref{s:UMPU}, we study unbiased tests for two-sided hypotheses and derive a two-sided DP-UMPU test, using a similar techniques as in Sections \ref{SectionDz} and \ref{SectionDnz}. While these unbiased tests often do not have a convenient form, we show that a close approximation can be used to efficiently compute $p$-values in Section \ref{s:ApproxUnbiased}. In Section \ref{s:CI} we propose methods to construct DP confidence intervals for binomial data. We show in Section \ref{s:OneSide} that our one-sided DP-UMP tests give uniformly most accurate one-sided confidence intervals. In Section \ref{s:TwoSide}, we show that the DP-UMPU test, Bonferroni test and the approximately unbiased two-sided tests can all be used to construct two-sided DP confidence intervals. Furthermore, we show that the DP-UMPU test leads to uniformly most accurate unbiased confidence intervals. In Section \ref{s:CD}, we derive stochastically optimal confidence distributions in terms of the one-sided DP-UMP tests. In Section \ref{DistributionFree}, we apply our results to develop private distribution-free hypothesis tests of continuous data.

In Section \ref{s:Simulations}, we study each of our proposed tests and confidence intervals through simulations. In Section \ref{s:Simulations1}, we verify that our UMP tests have exact type I error, and are more powerful than current techniques. In Section \ref{s:Simulations2}, we compare our different proposed two-sided tests, and in Section \ref{s:Simulations3} we study our two-sided confidence intervals. We conclude in Section \ref{s:Discussion} with discussion.  Detailed proofs and technical lemmas are postponed to Section \ref{s:Appendix}, which is in the supplementary material.

 \section{Hypothesis testing}\label{s:HT}
\subsection{Background and notation}\label{Background}
We use capital letters to denote random variables and lowercase letters for particular values. For a random variable $X$, we denote $F_X$ as its cumulative distribution function (cdf), $f_X$ as either its probability density function (pdf) or probability mass function (pmf), depending on the context.

For any set $\mscr X$, the $n$-fold cartesian product of $\mscr X$ is $\mscr X^n = \{(x_1,x_2,\ldots, x_n) \mid x_i \in \mscr X\}$.  We denote elements of $\mscr X^n$ with an underscore to emphasize that they are vectors. The \emph{Hamming distance} metric on $\mscr X^n$ is $H: \mscr X^n \times \mscr X^n \rightarrow \ZZ^{\geq 0}$, defined by $H(\ul x,\ul x') =\# \{i \mid x_i\neq x'_i\}$. 

Differential Privacy, introduced by \citet{Dwork2006:Sensitivity}, provides a formal measure of disclosure risk. The notion of DP that we give in Definition \ref{DPDefn} more closely resembles the formulation in \citet{Wasserman2010:StatisticalFDP}, which uses the language of distributions rather than random mechanisms. It is important to emphasize that the notion of Differential Privacy in Definition \ref{DPDefn} does not involve any distribution model on $\mscr X^n$.

\begin{defn}
  [Differential Privacy: \citet{Dwork2006:Sensitivity,Wasserman2010:StatisticalFDP}]\label{DPDefn}
Let $\ep>0$, $\de\geq 0$, and $n\in \{1,2,\ldots\}$ be given. Let $\mscr X$ be any set, and $(\mscr Y, \mscr F)$ be a measurable space. Let $\mscr P = \{P_{\ul x} \mid \ul x\in \mscr X^n\}$ be a set of probability measures on $(\mscr Y, \mscr F)$. We say that $\mscr P$ satisfies \emph{$(\ep, \de)$-Differential Privacy} ($(\ep,\de)$ - DP) if for all $B\in \mscr F$ and all $\ul x,\ul x' \in \mscr X^n$ such that $H(\ul x,\ul x')=1$, we have $P_{\ul x}(B) \leq e^\ep P_{\ul x'}(B) + \de.$
\end{defn}
In Definition \ref{DPDefn}, we interpret $\ul x\in \mscr X^n$ as the database we collect, where $\mscr X$ is the set of possible values that one individual can contribute, and $Y \sim P_{\ul x}$ as the statistical result we report to the public.
With this interpretation, if a set of distributions satisfies $(\ep,\de)$-DP for small values of $\ep$ and $\de$, then if one person's data is changed in the database, the change in the distribution of $Y$ is small. Ideally $\ep$ is a value less than $1$, and $\de\ll \frac 1n$ allows us to disregard events which have small probability. A special case is when $\de=0$, and $(\ep,0)$-DP is referred to as pure DP.

One of our main goals in this paper is to find uniformly most powerful (UMP) hypothesis tests, subject to DP. As the output of a DP method is necessarily a random variable, we work with randomized hypothesis tests, which we review in Definition \ref{HT}. Our notation follows that of \citet[Chapter 4]{Schervish1996}.

\begin{defn}
  [Hypothesis Test]\label{HT}
  Let $(X_1,\ldots, X_n)\in \mscr X^n$ be distributed $X_i \iid f_\ta$
  , where $\ta\in \Ta$. Let $\Ta_0, \Ta_1$ be disjoint subsets of $\Ta$. We call  $\Ta_0$ the \emph{null } and $\Ta_1$ the \emph{alternative}. A \emph{(randomized) test} of $H_0: \ta \in \Ta_0$ versus $H_1: \ta\in \Ta_1$ is a measurable function $\phi: \mscr X^n \rightarrow [0,1]$. We say a test $\phi$ is at \emph{level} $\al$ if $\sup_{\ta\in \Ta_0} \EE_{f_\ta} \phi \leq \al$, and at \emph{size} $\al$ if $\sup_{\ta\in \Ta_0} \EE_{f_\ta} \phi = \al$. 
  The \emph{power} of $\phi$ at $\ta$ is denoted $\beta_\phi(\ta) = \EE_{f_\ta}\phi$. 

Let $\Phi$ be a set of tests. We say that $\phi^*\in \Phi$ is the \emph{uniformly most powerful level $\al$} (UMP-$\al$) test among $\Phi$ for $H_0: \ta \in \Ta_0$ versus $H_1:\ta\in \Ta_1$ if 1) $\sup_{\ta\in \Ta_0}\beta_{\phi^*}(\ta)\leq \al$ and 2) for any $\phi \in \Phi$ such that $\sup_{\ta\in \Ta_0} \beta_{\phi}(\ta)\leq \al$ we have 
$\beta_{\phi^*}(\ta) \geq \beta_{\phi}(\ta)$, for all $\ta\in \Ta_1$.
\end{defn}

In Definition \ref{HT}, $\phi(x)$ is the probability of rejecting the null hypothesis, given that we observe $x\in \mscr X^n$. That is, the output of a test is either `Reject', or `Fail to Reject' with respective probabilities $\phi(x)$, and $1-\phi(x)$. While the condition of $(\ep,\de)$-DP does not involve the randomness of $X$, for hypothesis testing, the level/size, and power of a test depend on the model for $X$. In Section \ref{Setup}, we study the set of hypothesis tests which satisfy $(\ep,\de)$-DP.

\subsection{Problem setup and exchangeability condition}\label{Setup}

We begin this section by considering arbitrary hypothesis testing problems under DP.
Let $\phi: \mscr X^n \rightarrow [0,1]$ be any test. Since the only possible outputs of the mechanism are `Reject' or `Fail to Reject' with probabilities $\phi(\ul x)$ and $1-\phi(\ul x)$, the test $\phi$ satisfies $(\ep, \de)$-DP if and only if for all $\ul x,\ul x'\in \mscr X^n$ such that $H(\ul x,\ul x')=1$,
\begin{equation}
  \label{DPBinary}
\phi(\ul x) \leq e^\ep \phi(\ul x') + \de\quad \text{and} \quad (1- \phi(\ul x)) \leq e^\ep (1-\phi(\ul x')) + \de.
\end{equation}

\begin{remark}
 For any simple hypothesis test, where $\Phi_0$ and $\Phi_1$ are both singleton sets, the DP-UMP test $\phi^*$ is the solution to a linear program. If $\mscr X$ is finite, this observation allows one to explore the structure of DP-UMP tests through numerical linear program solvers. 
\end{remark}

  Given the random vector $\ul X\in \mscr X^n$, initially it may seem that we need to consider all $\phi$, which are arbitrary functions of $\ul X$. However, assuming that $\ul X$ is exchangeable, Theorem \ref{SuffStatThm} below says that for any DP hypothesis tests, we need only consider tests which are functions of the empirical distribution of $\ul X$. In other words, $\phi$ need not consider the order of the entries in $\ul X$. This result is reminiscent of De Finetti's Theorem (see \citealp[Theorem 1.48]{Schervish1996}) in classical statistics.
  \begin{thm}
    \label{SuffStatThm}
    Let $\Ta$ be a set and $\{\mu_\ta\}_{\ta\in \Ta}$ be a set of  exchangeable  distributions on $\mscr X^n$. Let $\phi: \mscr X^n \rightarrow [0,1]$ be a test satisfying \eqref{DPBinary}. Then there exists $\phi': \mscr X^n \rightarrow [0,1]$ satisfying \eqref{DPBinary} which only depends on the empirical distribution of $X$, such that $\int \phi'(\ul x) \ d\mu_\ta = \int \phi(\ul x) \ d\mu_\ta$, for all $\ta\in \Ta$.

\end{thm}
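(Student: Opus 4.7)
The plan is to symmetrize $\phi$ by averaging over all coordinate permutations. Concretely, let $S_n$ denote the symmetric group acting on $\mscr X^n$ by permuting coordinates, i.e.\ for $\pi \in S_n$ and $\ul x = (x_1,\ldots,x_n)$, set $\pi\cdot \ul x = (x_{\pi(1)}, \ldots, x_{\pi(n)})$. Then define
\[
\phi'(\ul x) \defeq \frac{1}{n!} \sum_{\pi \in S_n} \phi(\pi \cdot \ul x).
\]
Since permuting the coordinates of $\ul x$ just relabels the sum, $\phi'(\ul x) = \phi'(\pi\cdot \ul x)$ for every $\pi$, so $\phi'$ is a symmetric function of its inputs. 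A symmetric function depends only on the multiset $\{x_1,\ldots,x_n\}$, which is exactly the empirical distribution of $\ul x$; this gives the desired ``sufficiency'' reduction.

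Next I would verify the two claimed properties. For the DP condition \eqref{DPBinary}, the key observation is that the Hamming distance is invariant under the diagonal action of $S_n$: if $H(\ul x,\ul x')=1$, then $H(\pi\cdot\ul x,\pi\cdot \ul x')=1$ for every $\pi \in S_n$. Applying \eqref{DPBinary} to the neighboring pair $(\pi\cdot\ul x,\pi\cdot\ul x')$ gives, for each $\pi$,
\[
\phi(\pi\cdot\ul x) \leq e^\ep \phi(\pi\cdot\ul x') + \de, \qquad 1-\phi(\pi\cdot\ul x) \leq e^\ep (1-\phi(\pi\cdot\ul x')) + \de.
\]
Averaging each of these inequalities over $\pi \in S_n$ immediately yields the analogous inequalities for $\phi'$, since averaging is a convex combination and the constant bounds $e^\ep$ and $\de$ are unchanged.

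For the expectation-preservation property, I would use the assumed exchangeability of $\mu_\ta$: for every $\pi \in S_n$, the pushforward of $\mu_\ta$ under the permutation map $\ul x \mapsto \pi\cdot\ul x$ is again $\mu_\ta$. Consequently $\int \phi(\pi\cdot \ul x)\,d\mu_\ta(\ul x) = \int \phi(\ul x)\,d\mu_\ta(\ul x)$ for all $\pi$ and all $\ta\in \Ta$. Averaging over $\pi \in S_n$ (using Fubini to swap the finite sum with the integral) gives
\[
\int \phi'(\ul x)\, d\mu_\ta(\ul x) = \frac{1}{n!}\sum_{\pi\in S_n}\int \phi(\pi\cdot\ul x)\,d\mu_\ta(\ul x) = \int \phi(\ul x)\,d\mu_\ta(\ul x),
\]
as required. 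Measurability of $\phi'$ follows from measurability of $\phi$ and the fact that a finite average of measurable $[0,1]$-valued functions is measurable. There is no serious obstacle here; the only subtlety is ensuring the DP inequality is preserved under averaging, and that works because \eqref{DPBinary} is an affine (hence convex-combination-stable) constraint on the pair $(\phi(\ul x),\phi(\ul x'))$, combined with the permutation-invariance of the Hamming relation.
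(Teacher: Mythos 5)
Your proposal is correct and is essentially identical to the paper's own proof: the same symmetrization $\phi'(\ul x) = \frac{1}{n!}\sum_{\pi}\phi(\pi(\ul x))$, with the DP constraint preserved by convexity of \eqref{DPBinary} and the expectation preserved by exchangeability. Your explicit remark that the Hamming relation is invariant under the diagonal permutation action is a point the paper leaves implicit, but it does not change the argument.
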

\begin{proof}[Proof.]
  Define $\phi'$ by $\phi'(\ul x) = \frac{1}{n!} \sum_{\pi \in \sa(n)} \phi(\pi(\ul x))$, where $\sa(n)$ is the symmetric group on $n$ letters. For any $\pi \in \sa(n)$, $\phi(\pi(\ul x))$ satisfies $(\ep,\de)$-DP. By exchangeability, $\int \phi(\pi(\ul x))\ d\mu_\ta = \int \phi(\ul x)\ d\mu_\ta$. Since condition \ref{DPBinary} is closed under convex combinations, and integrals are linear, the result follows.
\end{proof}

We now state the particular problem which is the primary focus of the remainder of Section \ref{s:HT}. Each individual contributes a sensitive binary value to the database, and the database can be thought of as a random vector $\ul X \in \{0,1\}^n$, where  $X_i$ represents the sensitive data of individual $i$. We model $\ul X$ as $X_i \iid \mathrm{Bern}(\ta)$, where $\ta$ is unknown. Then the statistic $X= \sum_{i=1}^n  X_i\sim \mathrm{Binom}(n,\ta)$ encodes the empirical distribution of $\ul X$.
%
%
By Theorem \ref{SuffStatThm}, we can restrict our attention to tests which are functions of $X$. Such tests $\phi:\{0,1,\ldots, n\}\rightarrow[0,1]$ satisfy $(\ep,\de)$ -DP  if and only if for all $x\in \{1,2,\ldots, n\}$,

\begin{align}
  \phi(x) &\leq e^\ep \phi(x-1)+\de\label{DP1}\\
  \phi(x-1)&\leq e^\ep \phi(x)+\de\label{DP2}\\
(1-\phi(x))&\leq e^\ep(1-\phi(x-1))+\de\label{DP3}\\
(1-\phi(x-1))&\leq e^\ep(1-\phi(x))+\de\label{DP4}.
\end{align}
We denote the set of all tests which satisfy \eqref{DP1}-\eqref{DP4} as  
$\mscr D^n_{\ep,\de} =\big\{\phi:\phi \text{ satisfies \eqref{DP1}-\eqref{DP4}}\big\}.$

\begin{remark}
  For arbitrary DP hypothesis testing problems, the number of constraints generated by \eqref{DPBinary} could be very large, even infinite, but for our problem we only have $4n$ constraints.
\end{remark}

\subsection{Simple DP-UMP  tests when $\de=0$}\label{SectionDz}

In this section, we derive the DP-UMP test when $\de=0$ for simple hypotheses. In particular, given $n, \ep>0, \al>0, \ta_0<\ta_1$, and $X\sim \mathrm{Binom}(n,\ta)$, we find the UMP test at level $\al$ among $\mscr D^n_{\ep,0}$ for testing  $H_0:\ta = \ta_0$ versus $H_1: \ta=\ta_1$.

Before developing these tests, we introduce the \emph{Truncated-Uniform-Laplace} (Tulap) distribution, defined in Definition \ref{TulapDefn}, which is central to all of our main results. To motivate  this distribution, recall that \citet{Geng2016} show for general loss functions, adding discrete Laplace noise $L\sim \mathrm{DLap}(e^{-\ep})$ to $X$ is optimal  under $(\ep,0)$-DP. For this reason, it is natural to consider a test which post-processes $X+L$. However, we know by classical UMP theory that since $X+L$ is discrete, a randomized test is required. Instead of using a randomized test, by adding uniform noise $U\sim \mathrm{Unif}(-1/2,1/2)$ to $X+L$, we obtain a continuous sampling distribution, from which a deterministic test is available. We call the distribution of $(X+L+U)\mid {X}$ as $\mathrm{Tulap}(X,b,0)$. The distribution $\mathrm{Tulap}(X,b,q)$ is obtained by truncating
 between the $(q/2)^{th}$ and $(1-q/2)^{th}$ quantiles  
 of $\mathrm{Tulap}(X,b,0)$.

In Definition \ref{TulapDefn}, we use the \emph{nearest integer function} $[\cdot]:\RR\rightarrow \ZZ$. For any real number $t\in \RR$, $[t]$ is defined to be the integer nearest to $t$. If there are two distinct integers which are nearest to $t$, we take $[t]$ to be the even one. Note that, $[-t]=-[t]$ for all $t\in \RR$.


\begin{defn}[Truncated-Uniform-Laplace (Tulap)]\label{TulapDefn}
  Let $N$ and $N_0$ be real-valued random variables. Let $m\in \RR$, $b\in(0,1)$ and $q\in [0,1)$. We say that $N_0 \sim \mathrm{Tulap}(m,b,0)$ and $N\sim \mathrm{Tulap}(m,b,q)$ if $N_0$ and $N$ have the following cdfs:
\[F_{N_0}(x) = \begin{cases}
\frac{b^{-[x-m]}}{1+b} \l(b+(x-m-[x-m]+\frac 12)(1-b)\r)&\text{ if }x\leq [m]\\
1-\frac{b^{[x-m]}}{1+b}\l(b+ ([x-m]-(x-m)+\frac 12 ) (1-b)\r) &\text{ if } x>[m],
\end{cases}\]
 \[F_N(x) = \begin{cases}
     0&\text{ if } F_{N_0}<q/2\\
     \frac{F_{N_0}(x) - \frac q2}{1-q}&\text{ if } \frac q2\leq F_{N_0}(x) \leq 1-\frac q2\\
     1&\text{ if } F_{N_0}>1-\frac q2.
   \end{cases}\]
\end{defn}

Note that a Tulap random variable $\mathrm{Tulap}(m,b,q)$ is continuous and symmetric about $m$.

\begin{remark} 
  The Tulap distribution extends the staircase and discrete Laplace distributions as follows: $\mathrm{Tulap}(0,b,0) \overset d = \mathrm{Staircase}(b,1/2)$ and $[\mathrm{Tulap}(0,b,0)] \overset d= \mathrm{DLap}(b)$, where $\mathrm{Staircase}(b,\ga)$ is the distribution in \citet{Geng2016}. 
\citet{Geng2016} show that for a real valued statistic $T$ and convex symmetric loss functions centered at $T$, the optimal noise distribution for $(\ep,0)$-DP is $\mathrm{Staircase}(b,\ga)$ for $b=e^{-\ep}$ and some $\ga\in (0,1)$. If the statistic is a count, then \citet{Ghosh2009} show that $\mathrm{DLap}(b)$ is optimal. Our results agree with these works when $\de=0$, and extend them to the case of arbitrary $\de$.
\end{remark}

Now that we have defined the Tulap distribution, we are ready to develop the UMP test among $\mscr D^n_{\ep,0}$ for the simple hypotheses $H_0:\ta = \ta_0$ versus $H_1: \ta=\ta_1$. In classical statistics, the UMP for this test is given by the \emph{Neyman-Pearson lemma}, however in the DP framework, our test must satisfy \eqref{DP1}-\eqref{DP4}. Within these constraints, we follow the logic behind the Neyman-Pearson lemma as follows.
Let $\phi\in \mscr D_{\ep,0}^n$. Thinking of $\phi(x)$ defined recursively, equations \eqref{DP1}-\eqref{DP4} give  upper and  lower bounds for $\phi(x)$ in terms of $\phi(x-1)$. Since $\ta_1>\ta_0$, and binomial distributions have a monotone likelihood ratio (MLR) in $X$,  larger values of $X$ give more evidence for $\ta_1$ over $\ta_0$. Thus, $\phi(x)$ should be increasing in $x$ as much as possible, subject to  \eqref{DP1}-\eqref{DP4}. Lemma \ref{RecurrenceLem1} shows that taking $\phi(x)$ to be such a function is equivalent to having $\phi(x)$ be the cdf of a Tulap random variable.

\begin{lem}\label{RecurrenceLem1}
  Let $\ep>0$ be given. Let $\phi:\{0,1,2,\ldots, n\} \rightarrow (0,1)$. The following are equivalent:
  \begin{enumerate}[1)]
  \item There exists $m\in (0,1)$ such that for $x=0,\ldots, n$,
    \[\phi(x)=\begin{cases}
        m&\text{if }x=0\\
        \min\{e^\ep \phi(x-1) , 1-e^{-\ep} (1-\phi(x-1))\}&\text{if }x>0.
        \end{cases}\]
    \item There exists $m\in (0,1)$ such that 
      for $x=0,\ldots, n$,\\
  \[\phi(x) = \begin{cases}
      m&\text{if }x=0\\
e^\ep \phi(x-1) & \text{if } x>0\text{ and }\phi(x-1) \leq \frac{1}{1+e^\ep} \\
1-e^{-\ep}(1-\phi(x-1))&\text{if }x>0\text{ and }\phi(x-1)>\frac{1}{1+e^\ep}.
\end{cases}\]
\item There exists $m\in \RR$ such that $\phi(x) = F_{N_0}(x-m)$ for $x=0,1,2,\ldots, n$, where $N_0\sim \mathrm{Tulap}(0,b=e^{-\ep},0)$.
  \end{enumerate}
\end{lem}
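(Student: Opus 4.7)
The plan is to establish the two equivalences $(1) \iff (2)$ and $(2) \iff (3)$ separately, the latter reducing to a self-similarity identity for the Tulap cdf.

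For $(1) \iff (2)$, I reduce the problem to an elementary algebraic comparison. View $e^\ep y$ and $1 - e^{-\ep}(1-y)$ as affine functions of $y \in (0,1)$: the first passes through $(0,0)$ with slope $e^\ep > 1$, and the second passes through $(1,1)$ with slope $e^{-\ep} < 1$. Solving $e^\ep y = 1 - e^{-\ep}(1-y)$ yields the unique crossover $y = 1/(1+e^\ep)$. Below this crossover the steeper line $e^\ep y$ is the smaller of the two; above it the shallower line $1 - e^{-\ep}(1-y)$ is smaller. Consequently the $\min$ in (1), applied with $y = \phi(x-1)$, coincides exactly with the case-split expression in (2).

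For $(2) \iff (3)$, the heart of the argument is the following Tulap self-similarity identity: with $b = e^{-\ep}$,
\[
F_{N_0}(t+1) = \begin{cases} b^{-1} F_{N_0}(t) & \text{if } F_{N_0}(t) \leq b/(1+b), \\ 1 - b(1 - F_{N_0}(t)) & \text{if } F_{N_0}(t) \geq b/(1+b). \end{cases}
\]
Here $F_{N_0}(-1/2) = b/(1+b) = 1/(1+e^\ep)$, so the case split on $F_{N_0}(t)$ corresponds exactly to $t \leq -1/2$ versus $t \geq -1/2$. I prove the first case by direct substitution into the piecewise formula for $F_{N_0}$: for $t \in (k-1/2,\,k+1/2)$ with integer $k \leq -1$, $[t]=k$ and $[t+1]=k+1$, and the ratio collapses to $b^{-1}$ because the linear-in-$t$ factor $b + (t - [t] + 1/2)(1-b)$ transfers unchanged under $t \mapsto t+1$. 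The second case is then derived from the first via the symmetry $F_{N_0}(-t) = 1 - F_{N_0}(t)$, which holds since $N_0$ is continuous and symmetric about $0$. Half-integer boundary values and the even-integer tie-breaker built into $[\cdot]$ are absorbed by continuity of $F_{N_0}$.

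With this identity, both directions of $(2) \iff (3)$ follow by induction on $x$. For $(3) \Rightarrow (2)$: if $\phi(x) = F_{N_0}(x-m)$, then $\phi(0) = F_{N_0}(-m) \in (0,1)$ since $F_{N_0}$ is a continuous cdf with range $(0,1)$, and applying the identity at $t = x-1-m$ recovers the recursion in (2). For $(2) \Rightarrow (3)$: strict monotonicity and continuity of $F_{N_0}:\RR \to (0,1)$ yield a unique $m \in \RR$ with $F_{N_0}(-m) = \phi(0)$, and the induction closes because the threshold $\phi(x-1) \lessgtr 1/(1+e^\ep)$ in (2) translates exactly to the threshold $t \lessgtr -1/2$ in the identity. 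The main obstacle is the self-similarity identity itself, since the piecewise definition of $F_{N_0}$ and the nearest-integer convention force one to track $[t]$ and $[t+1]$ across every half-integer boundary; the symmetry reduction roughly halves the case work, but one still must verify that the affine pieces mesh consistently with the even-integer rule for $[\cdot]$.
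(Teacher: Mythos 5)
Your proposal is correct and takes essentially the same route as the paper: the $(1)\Leftrightarrow(2)$ step is the identical crossover computation at $1/(1+e^\ep)$, and your real-argument self-similarity identity for $F_{N_0}(t+1)$ is a repackaging of the paper's Lemma~\ref{CLem} (the $b^{-t}C(m)$ form of the cdf at integer offsets together with the reflection identity $b^{-[m]}C(m)=1-b^{[m]}C(-m)$), followed by the same intermediate-value/monotonicity argument to match the initial value. The one detail to spell out in a full write-up is the branch crossing when $t+1\in(0,1/2)$, where the two pieces of the Tulap cdf formula must be checked to coincide before the ``linear factor transfers unchanged'' claim applies; you correctly flag this as routine bookkeeping.
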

\begin{proof}[Proof Sketch.]
First show that 1) and 2) are equivalent by checking which constraint is active. Then verify that $F_{N_0}(x-m)$ satisfies the recurrence of 2). This can be done using the properties of the Tulap cdf, stated in Lemma \ref{CLem}, found in Section \ref{s:Appendix}.
\end{proof}

While the form of 1) in Lemma \ref{RecurrenceLem1} is intuitive, the connection to the Tulap cdf in 3) allows for a usable closed-form of the test. This connection with the Tulap distribution is crucial for the development in Section \ref{pValues}, which shows that the test in Lemma \ref{RecurrenceLem1} can be achieved by post-processing $X+N$, where $N$ is distributed as Tulap.

  It remains to show that the tests in Lemma \ref{RecurrenceLem1} are in fact UMP among $\mscr D_{\ep,0}^n$. The main tool used to prove this is Lemma \ref{BingLem}, which is a standard result in the classical hypothesis testing theory.

\begin{lem}
\label{BingLem}
  Let $(\mscr X, \mscr F,\mu)$ be a measure space and let $f$ and $g$ be two densities on $\mscr X$ with respect to $\mu$. Suppose that $\phi_1,\phi_2: \mscr X\rightarrow [0,1]$ are such that $\int \phi_1 f \ d\mu \geq \int \phi_2 f \ d\mu$, and there exists $k\geq 0$ such that $\phi_1\geq \phi_2$ when $g\geq kf$ and $\phi_1\leq \phi_2$ when $g< k f$. Then $\int \phi_1 g \ d\mu \geq \int \phi_2 g \ d\mu$.
\end{lem}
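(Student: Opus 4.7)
The plan is to prove the equivalent inequality $\int (\phi_1 - \phi_2) g \, d\mu \geq 0$ by introducing the constant $k$ as an algebraic device that separates the hypothesis about $f$ from the hypothesis about the ordering of $\phi_1$ and $\phi_2$ on the level sets of $g/f$. Specifically, I would write the identity
\[
\int (\phi_1 - \phi_2) g \, d\mu \;=\; \int (\phi_1 - \phi_2)(g - kf) \, d\mu \;+\; k\int (\phi_1 - \phi_2) f \, d\mu,
\]
and argue that each summand on the right is non-negative.

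The second summand is immediate: the assumption $\int \phi_1 f \, d\mu \geq \int \phi_2 f \, d\mu$ says that the integral factor is non-negative, and $k \geq 0$ by hypothesis, so the product is $\geq 0$. For the first summand, I would partition $\mscr X$ into the two measurable sets $A = \{g \geq kf\}$ and $A^c = \{g < kf\}$ and check signs pointwise. On $A$, the hypothesis gives $\phi_1 - \phi_2 \geq 0$, and by definition $g - kf \geq 0$, so the integrand is non-negative. On $A^c$, the hypothesis gives $\phi_1 - \phi_2 \leq 0$ and $g - kf < 0$, so the integrand is again non-negative (product of two non-positive quantities). Adding the two regional integrals yields $\int (\phi_1 - \phi_2)(g - kf)\, d\mu \geq 0$.

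There is really no subtle obstacle here; the lemma is a weighted variant of the standard Neyman--Pearson trick. The only point requiring any care is verifying that $k \geq 0$ is genuinely used (it is: otherwise the second summand could be negative) and that the two-region sign analysis is exhaustive because $\{g \geq kf\}$ and $\{g < kf\}$ partition $\mscr X$. Combining the two non-negativity facts with the display above then gives $\int \phi_1 g\, d\mu \geq \int \phi_2 g\, d\mu$, as required.
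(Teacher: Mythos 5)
Your proof is correct and follows essentially the same route as the paper: both arguments rest on the pointwise inequality $(\phi_1-\phi_2)(g-kf)\geq 0$, integrate it, and then use $k\geq 0$ together with $\int \phi_1 f\,d\mu \geq \int \phi_2 f\,d\mu$ to conclude. Your version merely makes the decomposition and the two-region sign check more explicit than the paper's one-line treatment.
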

\begin{proof}
  Note that $(\phi_1 - \phi_2)(g-kf)\geq 0$ for almost all $x\in \mscr X$ (with respect to $\mu$). This implies that $\int (\phi_1 - \phi_2)(g-kf) \ d\mu \geq 0$. Hence, $\int \phi_1 g \ d\mu - \int \phi_2 g \ d\mu \geq k\l(\int \phi_1 f\ d\mu - \int \phi_2 f \ d\mu\r) \geq 0$.
\end{proof}

Next we present our key result, Theorem \ref{UMP1}, which can be viewed as a `Neyman-Pearson lemma' for binomial data under $(\ep,0)$-DP. We extend this result in Theorem \ref{UMP2} for $(\ep,\de)$-DP. 

\begin{thm}\label{UMP1}
  Let $\ep>0$, $\al\in (0,1)$, $0\leq\ta_0<\ta_1\leq 1$, and $n\geq1$ be given. Observe $X\sim \mathrm{Binom}(n,\ta)$, where $\ta$ is unknown.  Set the decision rule $\phi^*: \ZZ\rightarrow [0,1]$ by $\phi^*(x) = F_{N_0}(x-m)$, where $N_0 \sim \mathrm{Tulap}(0,b=e^{-\ep},0)$ and  $m$ is chosen such that $E_{\ta_0}\phi^*(x)=\al$. Then $\phi^*$ is UMP-$\al$ test of $H_0: \ta=\ta_0$ versus $H_1: \ta = \ta_1$ among $\mscr D_{\ep,0}^n$.
\end{thm}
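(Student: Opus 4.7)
My plan is to apply the Neyman--Pearson-style Lemma \ref{BingLem} using the monotone likelihood ratio (MLR) of the binomial family together with a single-crossing property enforced by the DP constraints \eqref{DP1}--\eqref{DP4}.

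First I would verify that $\phi^*$ is a well-defined level-$\al$ test inside $\mscr D^n_{\ep,0}$. Membership in $\mscr D^n_{\ep,0}$ is immediate from Lemma \ref{RecurrenceLem1}, since $F_{N_0}(x-m)$ is precisely the ``maximally increasing'' function satisfying the tight DP recursion. Existence of $m$ with $E_{\ta_0}\phi^*=\al$ follows from an intermediate-value argument: the map $m\mapsto \sum_{x=0}^n\binom{n}{x}\ta_0^x(1-\ta_0)^{n-x}F_{N_0}(x-m)$ is continuous in $m$, tends to $0$ as $m\to+\infty$, and tends to $1$ as $m\to-\infty$, since the Tulap cdf is continuous and monotone.

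The heart of the argument is a \emph{single-crossing} property, which is where I expect the main work. Introduce the operators
\[U(t)=\min\{e^\ep t,\,1-e^{-\ep}(1-t)\}, \qquad L(t)=\max\{e^{-\ep}t,\,1-e^\ep(1-t)\},\]
both of which are nondecreasing on $[0,1]$. Rewriting \eqref{DP1}--\eqref{DP4} with $\de=0$ shows that every $\phi\in\mscr D^n_{\ep,0}$ satisfies $\phi(x)\le U(\phi(x-1))$ and, equivalently, $\phi(x-1)\ge L(\phi(x))$. Lemma \ref{RecurrenceLem1} asserts that $\phi^*$ saturates the upper recursion, $\phi^*(x)=U(\phi^*(x-1))$; a short case check on the two branches of Lemma \ref{RecurrenceLem1}(2) shows $\phi^*$ also saturates the lower one, i.e. $\phi^*(x-1)=L(\phi^*(x))$. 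Monotonicity of $U$ and $L$ then propagates inequalities: if $\phi(x_0)\le\phi^*(x_0)$ then $\phi(x)\le\phi^*(x)$ for every $x\ge x_0$, while if $\phi(x_0)\ge\phi^*(x_0)$ then $\phi(x)\ge\phi^*(x)$ for every $x\le x_0$. Hence there is a threshold $x_0\in\{0,1,\ldots,n+1\}$ with $\phi\ge\phi^*$ on $\{0,\ldots,x_0-1\}$ and $\phi\le\phi^*$ on $\{x_0,\ldots,n\}$.

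Finally I would close the argument via Lemma \ref{BingLem} with densities $f(x)=\binom{n}{x}\ta_0^x(1-\ta_0)^{n-x}$ and $g(x)=\binom{n}{x}\ta_1^x(1-\ta_1)^{n-x}$. Since $\ta_1>\ta_0$, the ratio $g/f$ is strictly increasing in $x$ (MLR), so there exists $k\ge 0$ with $\{x : g(x)\ge kf(x)\}=\{x\ge x_0\}$. On that set the single-crossing property gives $\phi^*\ge\phi$, and on its complement $\phi^*\le\phi$. Combined with $E_{\ta_0}\phi^*=\al\ge E_{\ta_0}\phi$ for any competitor $\phi\in\mscr D^n_{\ep,0}$, Lemma \ref{BingLem} applied with $\phi_1=\phi^*$ and $\phi_2=\phi$ delivers $E_{\ta_1}\phi^*\ge E_{\ta_1}\phi$, which is the desired UMP conclusion. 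The main obstacle is the single-crossing step, and in particular the verification that $\phi^*$ saturates both the $U$-recursion (given by Lemma \ref{RecurrenceLem1}) and the $L$-recursion; once that is established, the MLR plus Lemma \ref{BingLem} step is essentially routine.
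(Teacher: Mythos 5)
Your proof is correct and follows essentially the same route as the paper's: well-definedness via Lemma \ref{RecurrenceLem1} and the intermediate value theorem, a single-crossing comparison with an arbitrary competitor $\phi$, and then MLR plus Lemma \ref{BingLem}. The only cosmetic difference is that the paper sidesteps your verification that $\phi^*$ saturates the lower recursion $L$ by taking the crossing point $y$ to be the \emph{smallest} index with $\phi^*(y)\ge\phi(y)$, so that $\phi^*<\phi$ below $y$ holds by minimality rather than by backward propagation.
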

\begin{proof}[Proof Sketch.]
  Let $\phi$ be any other test which satisfies \eqref{DP1}-\eqref{DP4} at level $\al$. Then, since $\phi^*$ can be written in the form of 1) in Lemma \ref{RecurrenceLem1}, there exists $y\in \ZZ$ such that $\phi^*(x)\geq \phi(x)$ when $x\geq y$ and $\phi^*(x)\leq \phi(x)$ when $x<y$. By MLR of the binomial distribution and Lemma \ref{BingLem}, we have $\beta_{\phi^*}(\ta_1)\geq \beta_{\phi}(\ta_1)$.
\end{proof}

While the classical Neyman-Pearson lemma results in an acceptance and rejection region, the DP-UMP always has some probability of rejecting the null, due to the constraints \eqref{DP1}-\eqref{DP4}. As $\ep\uparrow \infty$, the DP-UMP converges to the non-private UMP.

\subsection{Simple and one-sided DP-UMP  tests when $\de\geq0$}\label{SectionDnz}
In this section, we extend the results of Section \ref{SectionDz} to allow for $\de\geq 0$. We begin by proposing the form of the DP-UMP test for simple hypotheses. As in Section \ref{SectionDz}, the DP-UMP test is increasing in $x$ as much as \eqref{DP1}-\eqref{DP4} allow. 
Lemma \ref{RecurrenceLem2} states that such a test can be written as the cdf of a Tulap random variable, where the parameter $q$ depends on $\ep$ and $\de$. We omit the proof of Theorem \ref{UMP2}, which mimics the proof of Theorem \ref{UMP1}.

\begin{lem}\label{RecurrenceLem2}
  Let $\ep>0$ and $\de\geq 0$ be given and set $b=e^{-\ep}$ and $q = \frac{2\de b}{1-b+2\de b}$. Let $\phi:\{0,1,2,\ldots, n\}\rightarrow [0,1]$. The following are equivalent:
  \begin{enumerate}[1)]
  \item There exists $y\in \{0,1,2,\ldots, n\}$ and $m \in (0,1)$ such that for $x=0,\ldots, n$, 
     \[
    \phi(x) = \begin{cases}
0&\text{if }x<y\\
m&\text{if }x=y\\
\min\{e^{\ep} \phi(x-1)+\de,\quad 1-e^{-\ep}(1-\phi(x-1))+e^{-\ep}\de,\quad 1\} &\text{if }x>y.
 \end{cases}\]
\item There exists $y\in \{0,1,2,\ldots, n\}$ and $m\in (0,1)$ such that for $x=0,\ldots, n$,
\[\phi(x) = \begin{cases}
0&\text{if }x<y\\
m&\text{if }x=y\\
e^\ep \phi(x-1)+\de&\text{ if }x>y \text{ and }\phi(x-1) \leq \frac{1-\de}{1+e^{\ep}}\\
1-e^{-\ep}(1-\phi(x-1))+e^{-\ep}\de&\text{ if }x>y \text{ and }\frac{1-\de}{1+e^{\ep}} \leq \phi(x-1) \leq 1-\de\\
1&\text{ if }x>y \text{ and }\phi(x-1) >1-\de.
\end{cases}\]
\item There exists $m\in \RR$ such that 
$\phi(x) = F_N(x-m)$ where $N\sim \mathrm{Tulap}(0,b,q)$.
  \end{enumerate}
\end{lem}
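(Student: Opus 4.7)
The plan is to prove the chain $1)\Leftrightarrow 2)\Leftrightarrow 3)$ in close analogy with Lemma \ref{RecurrenceLem1}, but now carefully tracking the $\de$-correction terms and the role of the truncation parameter $q$. The three cases in 2) exist precisely because, with $\de\geq 0$, the upper bound $e^{\ep}\phi(x-1)+\de$ can exceed $1$, so a saturation regime appears in addition to the two regimes already present when $\de=0$.

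For $1)\Leftrightarrow 2)$, I would identify which of the three expressions in the minimum is active as a function of $\phi(x-1)$. A direct algebraic comparison shows that $e^{\ep}\phi(x-1)+\de \leq 1-e^{-\ep}(1-\phi(x-1))+e^{-\ep}\de$ iff $(e^{\ep}-e^{-\ep})\phi(x-1)\leq (1-e^{-\ep})(1-\de)$, i.e.\ iff $\phi(x-1)\leq \frac{1-\de}{1+e^{\ep}}$; and $1-e^{-\ep}(1-\phi(x-1))+e^{-\ep}\de \geq 1$ iff $\phi(x-1)\geq 1-\de$. These two thresholds partition $[0,1]$ into exactly the three cases of 2), so this direction is routine bookkeeping.

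For $2)\Rightarrow 3)$, I would exhibit $m\in\RR$ such that $\phi(x)=F_N(x-m)$ satisfies the recurrence. The main step is the "unsaturated regime": using $F_N=\bigl(F_{N_0}-q/2\bigr)/(1-q)$ from Definition \ref{TulapDefn}, together with the basic recurrence for the untruncated Tulap cdf, $F_{N_0}(x)=e^{\ep}F_{N_0}(x-1)$ below the median (readable from the piecewise formula for $F_{N_0}$ and Lemma \ref{CLem}), a short algebraic reduction gives
\[
e^{\ep}F_N(x-1)+\de-F_N(x)\;=\;\frac{q(e^{\ep}-1)/2-\de(1-q)}{1-q},
\]
which vanishes exactly when $q=\frac{2\de}{2\de+e^{\ep}-1}=\frac{2\de b}{1-b+2\de b}$. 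Thus the stated value of $q$ is precisely what makes the first branch of the recurrence hold. The second branch then follows either by the symmetry $F_N(x)+F_N(-x)=1$ (Tulap is symmetric about its median) or by an identical calculation on the upper half of the cdf. The saturation values $\phi(x)\in\{0,1\}$ in 2) correspond to $x-m$ falling outside the $[q/2,\,1-q/2]$-quantile range of $F_{N_0}$, which is exactly the support of the truncated distribution.

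For the converse $3)\Rightarrow 2)$, once $\phi$ is recognized as a shifted truncated Tulap cdf, the case structure of 2) is read off directly from Definition \ref{TulapDefn} and the identification of $q$ above. The main obstacle I anticipate is the bookkeeping at the boundaries: one must verify that $m$ can be chosen so that the first nonzero value $\phi(y)$ lies in $(0,1)$ (i.e.\ $y-m$ falls into the interior of the truncated support), and that the recurrence continues to hold across the transition where $F_N$ saturates to $1$. All of this is mechanical once the algebraic identity pinning down $q$ is in hand; no new idea beyond the proof of Lemma \ref{RecurrenceLem1} is needed.
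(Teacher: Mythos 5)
Your proposal is correct and follows essentially the same route as the paper: determine the active constraint among the three terms of the minimum to obtain $1)\Leftrightarrow 2)$ (with the same thresholds $\frac{1-\de}{1+e^{\ep}}$ and $1-\de$), then verify that $F_N(x-m)$ satisfies the recurrence by combining $F_N=\bigl(F_{N_0}-q/2\bigr)/(1-q)$ with the untruncated recurrence from Lemma \ref{RecurrenceLem1}; your identity pinning down $q=\frac{2\de b}{1-b+2\de b}$ is exactly the paper's computation run in reverse, and the saturation/boundary checks you flag are the ones the paper carries out. The only bookkeeping the paper makes explicit that you elide is the preliminary claim that $\phi(x)\leq\frac{1-\de}{1+e^{\ep}}$ holds if and only if $F_{N_0}(x-m)\leq\frac{1}{1+e^{\ep}}$, which is what aligns the case structure of 2) with the below-/above-median regimes of the Tulap cdf before the branch-by-branch verification.
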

\begin{proof}[Proof Sketch.]
  The equivalence of 1) and 2) only requires determining which constraints are active. To show the equivalence of 2 and 3, we verify that $F_{N}(x-m)$ satisfies the recurrence of 2), using the expression of  $F_N(x)$ in terms of $F_{N_0}(x)$ given in Definition \ref{TulapDefn}, and the results of 
  Lemma \ref{RecurrenceLem1}.
\end{proof}

\begin{thm}\label{UMP2}
  Let $\ep>0$, $\de\geq 0$, $\al \in (0,1)$, $0\leq\ta_0<\ta_1\leq 1$, and $n\geq 1$ be given. Observe $X\sim \mathrm{Binom}(n,\ta)$, where $\ta$ is unknown. Set $b=e^{-\ep}$ and $q = \frac{2\de b}{1-b+2\de b}$. Define $\phi^*: \ZZ \rightarrow [0,1]$ by $\phi^*(x) = F_N(x-m)$ where $N\sim \mathrm{Tulap}(0,b,q)$ and $m$ is chosen such that $E_{\ta_0}\phi^*(x)=\al$. Then $\phi^*$ is UMP-$\al$ test of $H_0: \ta=\ta_0$ versus $H_1: \ta = \ta_1$ among $\mscr D_{\ep,\de}^n$.
\end{thm}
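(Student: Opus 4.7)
The plan is to mimic the structure of the proof sketch of Theorem \ref{UMP1}, using Lemma \ref{RecurrenceLem2} in place of Lemma \ref{RecurrenceLem1}. First I would check that a valid threshold $m\in\RR$ exists: since $F_N(x-m)$ is continuous and monotone in $m$, with $F_N(x-m)\to 1$ as $m\to-\infty$ and $F_N(x-m)\to 0$ as $m\to+\infty$, the intermediate value theorem guarantees a unique $m$ with $\EE_{\ta_0}\phi^*(X)=\al$, so $\phi^*$ is well defined and lies in $\mscr D^n_{\ep,\de}$ by Lemma \ref{RecurrenceLem2}.

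The core of the argument, following the Neyman-Pearson-style reasoning, is to prove the crossing property: for any $\phi\in\mscr D^n_{\ep,\de}$ with $\EE_{\ta_0}\phi\leq\al$, there exists $y\in\ZZ$ with $\phi^*(x)\geq\phi(x)$ for $x\geq y$ and $\phi^*(x)\leq\phi(x)$ for $x<y$. I would establish this by showing that the set $S=\{x:\phi^*(x)\geq\phi(x)\}$ is upward closed in $\{0,1,\ldots,n\}$. Concretely, I would use form 2) of Lemma \ref{RecurrenceLem2} to analyze three cases based on the value of $\phi^*(x)$: when $\phi^*(x)\leq(1-\de)/(1+e^\ep)$ the updating rule gives $\phi^*(x+1)=e^\ep\phi^*(x)+\de\geq e^\ep\phi(x)+\de\geq\phi(x+1)$ by \eqref{DP1} applied to $\phi$; when $(1-\de)/(1+e^\ep)\leq\phi^*(x)\leq 1-\de$ the updating rule gives $\phi^*(x+1)=1-e^{-\ep}(1-\phi^*(x))+e^{-\ep}\de\geq 1-e^{-\ep}(1-\phi(x))+e^{-\ep}\de\geq\phi(x+1)$ by \eqref{DP3} applied to $\phi$; and when $\phi^*(x)>1-\de$ we simply have $\phi^*(x+1)=1\geq\phi(x+1)$. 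Thus $S$ is upward closed, so $S=\{y,y+1,\ldots,n\}$ for some threshold $y$ (and $S$ is non-empty, since otherwise $\phi(x)>\phi^*(x)$ for all $x$ would contradict $\EE_{\ta_0}\phi\leq\EE_{\ta_0}\phi^*$).

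Finally, I would invoke the monotone likelihood ratio of $\mathrm{Binom}(n,\ta)$: since $\ta_1>\ta_0$, the ratio $f_{\ta_1}(x)/f_{\ta_0}(x)$ is increasing in $x$, so for some $k\geq 0$ we have $f_{\ta_1}(x)\geq kf_{\ta_0}(x)$ exactly on $\{x\geq y\}$, which is where $\phi^*\geq\phi$. Combined with $\EE_{\ta_0}\phi^*\geq\EE_{\ta_0}\phi$ (which holds because $\phi^*$ is size $\al$ and $\phi$ is level $\al$), Lemma \ref{BingLem} with $\phi_1=\phi^*,\phi_2=\phi,f=f_{\ta_0},g=f_{\ta_1}$ delivers $\beta_{\phi^*}(\ta_1)\geq\beta_\phi(\ta_1)$, which is the UMP property.

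I expect the only delicate step to be the three-case verification of the crossing property, since one must carefully match each branch of the $\min$ in form 1) of Lemma \ref{RecurrenceLem2} to the appropriate DP constraint among \eqref{DP1}--\eqref{DP4} on the competitor $\phi$. Once that monotone propagation is established, the rest of the argument is a direct transplant of the proof of Theorem \ref{UMP1}.
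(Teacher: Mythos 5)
Your proposal follows exactly the route the paper intends: the proof of Theorem \ref{UMP2} is omitted there with the remark that it mimics Theorem \ref{UMP1}, and your transplant---existence of $m$ by the intermediate value theorem, a single-crossing argument driven by the recurrence of Lemma \ref{RecurrenceLem2}, then MLR plus Lemma \ref{BingLem}---is that mimicry. One small bookkeeping slip: in your middle case the bound $\phi(x+1)\leq 1-e^{-\ep}(1-\phi(x))+e^{-\ep}\de$ comes from \eqref{DP4} (with $x$ shifted to $x+1$ and rearranged), not from \eqref{DP3}, which only gives a lower bound on $\phi(x+1)$.

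There is, however, a genuine gap in the crossing step when $\de>0$. The truncated Tulap cdf can vanish identically on an initial segment, which is precisely why forms 1) and 2) of Lemma \ref{RecurrenceLem2} carry a threshold $y$ with $\phi^*(x)=0$ for $x<y$ and $\phi^*(y)=m$: the recurrence you invoke holds only for $x>y$. On the zero segment your first case would predict $\phi^*(x+1)=e^\ep\cdot 0+\de=\de$, whereas in fact $\phi^*(x+1)=0$. Consequently $S=\{x:\phi^*(x)\geq\phi(x)\}$ need not be upward closed: one can have $\phi^*(x)=\phi(x)=0$ for all $x<y$ (so these points lie in $S$) while $\phi(y)>m=\phi^*(y)$. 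The repair is routine, because Lemma \ref{BingLem} requires only a single crossing point, not upward closedness of $S$. Let $y^*$ be the smallest index $\geq y$ with $\phi^*(y^*)\geq\phi(y^*)$; such an index exists, since otherwise $\phi^*\leq\phi$ everywhere with strict inequality somewhere in $\{y,\dots,n\}$, forcing $\al=\EE_{\ta_0}\phi^*<\EE_{\ta_0}\phi\leq\al$. Your three-case propagation is then valid from $y^*$ upward, where the recurrence applies, and for $x<y^*$ one has $\phi^*(x)\leq\phi(x)$ either because $\phi^*(x)=0$ or because $y\leq x<y^*$. With that adjustment the argument is complete and matches the paper's intended proof.
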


So far we have focused on simple hypothesis tests, but since our test only depends on $\ta_0$, and not on $\ta_1$, our test is in fact the DP-UMP for one-sided tests, as stated in Corollary \ref{OneSide1}. Corollary \ref{OneSide1} also shows that we can use our tests to build DP-UMP tests for $H_0: \ta\geq \ta_0$ versus $H_1: \ta<\ta_0$ as well. Hence, Corollary \ref{OneSide1} is our most general result so far, containing Theorems \ref{UMP1} and \ref{UMP2} as special cases.

\begin{cor}\label{OneSide1}
Let $X\sim \mathrm{Binom}(n,\ta)$. Set $\phi^*(x) = F_N(x-m_1)$ and $\psi^*(x) = 1-F_N(x-m_2)$, where $N\sim \mathrm{Tulap}\l(0, b=e^{-\ep},q=\frac{2\de b}{1-b+2\de b}\r)$ and $m_1,m_2$ are chosen such that $E_{\ta_0}\phi^*(x)=\al$ and $E_{\ta_0} \psi^*(x)=\al$. Then $\phi^*(x)$ is UMP-$\al$ among $\mscr D_{\ep,\de}^n$ for testing $H_0: \ta\leq \ta_0$ versus $H_1: \ta>\ta_0$, and $\psi^*(x)$ is UMP-$\al$ among $\mscr D_{\ep,\de}^n$ for testing $H_0: \ta\geq \ta_0$ versus $H_1: \ta<\ta_0$.
\end{cor}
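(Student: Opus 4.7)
The plan is to leverage Theorem \ref{UMP2} to upgrade from simple to composite nulls via a monotonicity argument, and then reduce the left-sided test to the right-sided one by the symmetry $X \mapsto n - X$.

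For the first claim (the $\phi^*$ statement), I will first confirm that $\phi^*(x) = F_N(x - m_1)$ lies in $\mscr D_{\ep,\de}^n$: this is immediate from Lemma \ref{RecurrenceLem2}(3), which characterizes tests of this form as exactly those saturating the DP constraints. Existence of a suitable $m_1 \in \RR$ with $E_{\ta_0}\phi^* = \al$ follows from continuity, since $E_{\ta_0} F_N(X - m)$ is continuous in $m$ and ranges between $1$ (as $m \to -\infty$) and $0$ (as $m \to +\infty$). Next, because $F_N$ is a cdf, $\phi^*$ is non-decreasing in $x$; combined with the monotone likelihood ratio of $\mathrm{Binom}(n,\ta)$ in $x$, the power function $\beta_{\phi^*}(\ta)$ is non-decreasing in $\ta$, so $\sup_{\ta \leq \ta_0} \beta_{\phi^*}(\ta) = \beta_{\phi^*}(\ta_0) = \al$. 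Finally, for the UMP property: let $\phi \in \mscr D_{\ep,\de}^n$ be any competitor with $\sup_{\ta \leq \ta_0} \beta_\phi(\ta) \leq \al$; in particular $\beta_\phi(\ta_0) \leq \al$. For each fixed $\ta_1 > \ta_0$, Theorem \ref{UMP2} applied to $H_0:\ta=\ta_0$ vs $H_1:\ta=\ta_1$ gives $\beta_{\phi^*}(\ta_1) \geq \beta_\phi(\ta_1)$. Crucially, $\phi^*$ does not depend on $\ta_1$, so this inequality holds simultaneously for every $\ta_1 > \ta_0$, which is exactly UMP.

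For the second claim (the $\psi^*$ statement), I will reduce to the first by the transformation $Y = n - X \sim \mathrm{Binom}(n, 1 - \ta)$. Setting $\eta = 1 - \ta$, the left-sided problem for $X$ becomes the right-sided problem $H_0: \eta \leq 1 - \ta_0$ vs $H_1: \eta > 1 - \ta_0$ for $Y$. The bijection $\phi(x) \leftrightarrow \tilde\phi(y) := \phi(n - y)$ preserves $\mscr D_{\ep,\de}^n$ because the constraints \eqref{DP1}--\eqref{DP4} are manifestly symmetric in $x$ and $x-1$ and thus invariant under $x \mapsto n - x$. Applying the first claim to $Y$ yields a UMP test of the form $\tilde\psi^*(y) = F_N(y - m)$ with $E_{1-\ta_0}^Y \tilde\psi^*(Y) = \al$. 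Pulling back via the symmetry of the Tulap distribution about $0$, which gives $F_N(-t) = 1 - F_N(t)$, produces $\tilde\psi^*(n - x) = F_N(n - m - x) = 1 - F_N(x - (n-m))$, matching the advertised form with $m_2 = n - m$, and the condition $E_{\ta_0}\psi^*(X) = \al$ is automatic from the change of variables.

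The main obstacle I anticipate is bookkeeping rather than deep: I need to confirm (i) solvability of the level equations in edge cases such as $\ta_0 \in \{0,1\}$, where $X$ is degenerate and the continuity argument reduces to continuity of $F_N(-m)$ or $F_N(n-m)$ in $m$; (ii) that $x \mapsto n - x$ truly preserves $\mscr D_{\ep,\de}^n$, which follows from the symmetric pairing of \eqref{DP1}--\eqref{DP2} (and of \eqref{DP3}--\eqref{DP4}) after relabeling indices; and (iii) that aggregating the simple-hypothesis optimality across all $\ta_1 > \ta_0$ yields UMP against the composite alternative, which is valid precisely because $\phi^*$ is simultaneously Neyman--Pearson optimal for every $\ta_1$. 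These are the only points that need care; the rest of the argument assembles directly from Theorem \ref{UMP2} and the properties of the Tulap cdf.
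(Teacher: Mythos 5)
Your proposal is correct and follows essentially the same route as the paper: use the MLR property to pass from the simple null $\ta=\ta_0$ to the composite null $\ta\leq\ta_0$, invoke Theorem \ref{UMP2} for each fixed $\ta_1>\ta_0$ (noting $\phi^*$ does not depend on $\ta_1$), and obtain the left-sided test by the reflection $(x,\ta)\mapsto(n-x,1-\ta)$ together with the symmetry of the Tulap cdf. The extra bookkeeping you flag (that the reflection preserves $\mscr D_{\ep,\de}^n$, solvability of the level equation) is handled the same way in the paper's argument.
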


\subsection{Optimal one-sided private p-values}\label{pValues}
For the  DP-UMP tests developed in Sections \ref{SectionDz} and \ref{SectionDnz}, the output  is simply to `Reject' or `Fail to Reject' $H_0$. In scientific research, however, $p$-values are often used to weigh the evidence in favor of the alternative hypothesis over the null. Intuitively, a $p$-value is the smallest level $\al$, for which a test outputs `Reject'. Definition \ref{def:pvalue} gives a formal definition of a $p$-value.

\begin{defn}[p-Value: \citet{Casella2002}]\label{def:pvalue}
  For a random vector $X_i \iid f_\ta$, a \emph{p-value} for $H_0: \ta\in \Ta_0$ versus $H_1: \ta\in \Ta_1$ is a statistic $p(\ul X)$ taking values in $[0,1]$, such that for every $\al \in [0,1]$,
  \[\sup_{\ta\in \Ta_0} P_\ta(p(X)\leq \al) \leq \al.\]
The smaller the value of $p(X)$, the greater evidence we have for $H_1$ over $H_0$.
\end{defn}

In this section, we show that our  proposed DP-UMP tests can be achieved by post-processing a Tulap random variable. Using this, we develop a differentially private algorithm for releasing a private $p$-value which agrees with the DP-UMP tests in Sections \ref{SectionDz} and \ref{SectionDnz}. While we state our $p$-values for one-sided tests, they also apply to simple tests as a special case.



Since our DP-UMP test from Theorem \ref{UMP2} rejects  with probability $\phi^*(x) = F_N(x-m)$, given $N\sim F_N$, $\phi^*(x)$ rejects the null if and only if $X+N\geq m$. So, our DP-UMP tests can be stated as a 
post-processing of $X+N$. Theorem \ref{TulapDP} states that releasing $X+N$ satisfies $(\ep,\de)$-DP. 
By the post-processing property of DP (see \citealp[Proposition 2.1]{Dwork2014:AFD}),
once we release $X+N$, any function of $X+N$ also satisfies $(\ep,\de)$-DP. Thus, we can compute our private UMP-$\al$ tests as a function of $X+N$ for any $\al$. 
 The smallest $\al$ for which we reject the null is the $p$-value for that test. In fact Algorithm \ref{PValueAlgorithm} and Theorem \ref{PValueResult} give a more elegant method of computing this $p$-value.

\begin{thm}\label{TulapDP}
Let $\mscr X$ be any set, and $T:\mscr X^n\rightarrow \ZZ$, with $\Delta(T) = \sup |T(\ul x)-T(\ul x')|=1$, where the supremum is over the set $\{(\ul x,\ul x')\in \mscr X^n\times \mscr X^n\mid H(\ul x,\ul x')=1\}$. Then the set of distributions $\l\{\mathrm{Tulap}\l(T(\ul x),b=e^{-\ep}, \frac{2\de b}{1-b+2\de b}\r)\middle| \ul x\in\mscr X^n\r\}$ satisfies $(\ep,\de)$-DP.
\end{thm}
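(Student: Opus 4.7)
The plan is to reduce the claim, via the translation-invariance and reflection-symmetry of the Tulap location family, to verifying $(\ep,\de)$-DP between the two neighboring distributions $P_0 = \mathrm{Tulap}(0,b,q)$ and $P_1 = \mathrm{Tulap}(1,b,q)$, and then to establish this via a direct density computation. Whenever $H(\ul x,\ul x')=1$ we have $|T(\ul x)-T(\ul x')|\leq \Delta(T)=1$; equality gives identical distributions, so assume $T(\ul x')=T(\ul x)+1$. Translation then reduces the task to proving $P_0(B)\leq e^\ep P_1(B)+\de$ for every Borel $B$, and the reverse inequality follows by applying this to the reflected set $\{1-x:x\in B\}$ together with the symmetry of each Tulap about its location. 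By the standard duality $\sup_B [P_0(B)-e^\ep P_1(B)] = \int [f_0 - e^\ep f_1]_+ \, dx$, it suffices to show that this integral equals $\de$.

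Differentiating the cdf in Definition \ref{TulapDefn} yields $f_{N_0}(x) = (1-b)b^{|[x]|}/(1+b)$ off a measure-zero set, so $f_m(x) = (1-q)^{-1} f_{N_0}(x-m)\mathbf{1}_{[-c+m,\,c+m]}(x)$ where $c := -F_{N_0}^{-1}(q/2)$. A short case analysis on the staircase shape shows
\[
\frac{f_{N_0}(x)}{f_{N_0}(x-1)} \;=\; \begin{cases} b & \text{if } x>1/2, \\ b^{-1}=e^\ep & \text{if } x<1/2. \end{cases}
\]
Consequently, on the support overlap $[-c+1,\,c]$ (which is nonempty because $\de\leq 1$ forces $c\geq 1/2$), one has $f_0(x)\leq e^\ep f_1(x)$, so $[f_0 - e^\ep f_1]_+$ is concentrated on the unmatched sub-interval $[-c,\,-c+1)$, where $f_1\equiv 0$.

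The crux is then the identity $F_{N_0}(-c+1) = b^{-1} F_{N_0}(-c)$, which I would derive from the density-shift relation $f_{N_0}(y) = b\cdot f_{N_0}(y+1)$ for $y<-1/2$; integrating and substituting $z=y+1$ turns this into $F_{N_0}(y) = b\cdot F_{N_0}(y+1)$ on that range, which applies at $y=-c$ since $c>1/2$. Because $F_{N_0}(-c)=q/2$ by the definition of $c$, this yields
\[
P_0\bigl([-c,\,-c+1)\bigr) \;=\; \frac{F_{N_0}(-c+1) - q/2}{1-q} \;=\; \frac{(q/2)(b^{-1}-1)}{1-q},
\]
and substituting $q = 2\de b/(1-b+2\de b)$ collapses this expression to exactly $\de$, which is precisely the role of the specific form of $q$ in the statement. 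The main obstacle is the bookkeeping required to verify the shift identity $F_{N_0}(y) = b\,F_{N_0}(y+1)$ across the half-integer boundaries of the nearest-integer function $[\cdot]$ in the definition of $F_{N_0}$; these edge cases are null sets and do not affect the integral, but must be tracked carefully.
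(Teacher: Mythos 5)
Your proposal is correct, and it takes a genuinely different route from the paper's proof. The paper argues at the level of cdfs: Lemma \ref{CDFLem} (a Neyman--Pearson argument) shows that for a continuous location family with MLR in $T(\ul x)$, the worst-case event in the DP inequality is always a half-line, so it suffices to verify the constraints for the cdf and survival function; those constraints are then exactly the recurrence already established in Lemma \ref{RecurrenceLem2} (equivalence of forms 1) and 3)), making the paper's proof a two-line corollary of machinery it has already built. You instead work at the level of densities, using the hockey-stick characterization $\sup_B\bigl[P_0(B)-e^{\ep}P_1(B)\bigr]=\int[f_0-e^{\ep}f_1]_+\,dx$ after reducing to the neighboring pair $\mathrm{Tulap}(0,b,q)$ versus $\mathrm{Tulap}(1,b,q)$ by translation, and handling the reverse inequality by the reflection $x\mapsto 1-x$. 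Your computations check out: the density ratio on the support overlap $[1-c,c]$ is $b^{\pm1}\leq e^{\ep}$, the overlap is nonempty because $q/2\leq b/(1+b)$ is equivalent to $\de\leq 1$, the shift identity $F_{N_0}(y)=b\,F_{N_0}(y+1)$ for $y\leq -1/2$ gives $F_{N_0}(-c+1)=b^{-1}q/2$, and the normalized mass of the unmatched interval $[-c,-c+1)$ collapses to exactly $\de$ upon substituting $q=2\de b/(1-b+2\de b)$. What your approach buys is a self-contained argument that makes the role of $q$ transparent --- the entire $\de$ budget is spent on the truncated tail, and the guarantee is seen to be tight since the integral equals $\de$ exactly rather than being merely bounded by it; what it costs is the direct density bookkeeping (the staircase case analysis and the half-integer boundary points of $[\cdot]$, which as you note are null sets), whereas the paper's route delegates all of that to lemmas it needs anyway for the UMP results.
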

\begin{proof}[Proof Sketch.]
Since Tulap random variables are continuous and have MLR in $T(\ul x)$, by Lemma \ref{CDFLem} in Section \ref{s:Appendix}, it suffices to show that for all $t\in \RR$, the cdf of a Tulap random variable $F_{N}(t-T(\ul x))$ satisfies \eqref{DPBinary}, with $\phi(\ul x)$ replaced with $F_{N}(t-T(\ul x))$. This already established in Lemma \ref{RecurrenceLem2}, by the equivalence of 1) and 3).
\end{proof}

\begin{thm}\label{PValueResult}
  Let $\ep>0$, $\de\geq 0$, $X\sim \mathrm{Binom}(n,\ta)$ where $\ta$ is unknown, and $Z|X \sim \mathrm{Tulap}(X,b=e^{-\ep}, q= \frac{2\de b}{1-b+2\de b})$. Then 
  \begin{enumerate}[1)]
  \item $p(\ta_0,Z) \defeq P(X+N\geq Z\mid Z)$ is a $p$-value for $H_0: \ta\leq \ta_0$ versus $H_1: \ta>\ta_0$, where the probability is over $X\sim \mathrm{Binom}(n,\ta_0)$ and $ N\sim \mathrm{Tulap}(0,b,q)$.
  \item Let $0<\al<1$ be given. The test $\phi^*(x) = P_{Z \sim \mathrm{Tulap}(x,b,q)} (p(\ta_0,Z)\leq \al \mid X)$ is UMP-$\al$ for $H_0: \ta\leq \ta_0$ versus $H_1: \ta>\ta_0$ among $\mscr D_{\ep,\de}^n$.
  \item  For all $\ta_1>\ta_0$, $p(\ta_0,Z)$ is the stochastically smallest $(\ep,\de)$-DP  $p$-value for $H_0: \ta\leq \ta_0$ versus $H_1: \ta\geq \ta_0$.
\item The output of Algorithm \ref{PValueAlgorithm} is equal to $p(\ta_0,Z)$.
  \end{enumerate}
\end{thm}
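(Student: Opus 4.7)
The plan is to prove the four parts in sequence, exploiting the fact that under $\theta = \theta_0$ the random variable $Z$ has exactly the same marginal distribution as $X+N$, and that this common distribution is continuous because the Tulap distribution contains a uniform component (Definition \ref{TulapDefn}). Let $F_0$ denote the cdf of $W := X+N$ under $\theta_0$ (with $X \sim \mathrm{Binom}(n,\theta_0)$ and $N \sim \mathrm{Tulap}(0,b,q)$ independent); then $F_0$ is continuous and strictly increasing on $\mathbb{R}$, and by construction $p(\theta_0, Z) = 1 - F_0(Z)$.

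For part (1), since $Z \stackrel{d}{=} W$ under $\theta_0$ and $F_0$ is continuous, $1 - F_0(Z) \sim \mathrm{Unif}(0,1)$ under $\theta_0$. For $\theta < \theta_0$, $\mathrm{Binom}(n,\theta)$ is stochastically dominated by $\mathrm{Binom}(n,\theta_0)$, so adding an independent $N$ gives $F_\theta(z) \geq F_0(z)$ pointwise. Consequently $1 - F_0(Z)$ is stochastically larger than $\mathrm{Unif}(0,1)$ under $\theta$, which yields $P_\theta(p(\theta_0, Z) \leq \alpha) \leq \alpha$, as required. For part (2), since $z \mapsto 1 - F_0(z)$ is continuous and strictly decreasing, there is a unique $c_\alpha = F_0^{-1}(1-\alpha)$ with $\{p(\theta_0, Z) \leq \alpha\} = \{Z \geq c_\alpha\}$. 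Conditionally on $X = x$, $Z = x + N$, so
\[
\phi^*(x) = P(Z \geq c_\alpha \mid X = x) = P(N \geq c_\alpha - x) = 1 - F_N(c_\alpha - x) = F_N(x - c_\alpha),
\]
where the last equality uses the symmetry of $\mathrm{Tulap}(0,b,q)$ about $0$. This is precisely the form of Corollary \ref{OneSide1} with $m_1 = c_\alpha$, and $E_{\theta_0}\phi^* = P_{\theta_0}(p(\theta_0, Z) \leq \alpha) = \alpha$ by part (1), so the threshold matches and $\phi^*$ is UMP-$\alpha$ in $\mathscr{D}^n_{\epsilon,\delta}$.

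Part (3) follows immediately from part (2): if $p'$ is any $p$-value based on an $(\epsilon,\delta)$-DP release, then for each $\alpha \in (0,1)$ the indicator $\mathbf{1}(p' \leq \alpha)$ is a level-$\alpha$ test in $\mathscr{D}^n_{\epsilon,\delta}$, so for every $\theta_1 > \theta_0$,
\[
P_{\theta_1}(p' \leq \alpha) = E_{\theta_1}[\mathbf{1}(p' \leq \alpha)] \leq E_{\theta_1}[\phi^*_\alpha] = P_{\theta_1}(p(\theta_0, Z) \leq \alpha),
\]
which is exactly the stochastic-smallness statement. For part (4), I would expand the probability defining $p(\theta_0, z)$ by conditioning on $X$:
\[
p(\theta_0, z) = \sum_{k=0}^n \binom{n}{k}\theta_0^k(1-\theta_0)^{n-k}\bigl(1 - F_N(z - k)\bigr),
\]
and verify line-by-line that Algorithm \ref{PValueAlgorithm} evaluates this sum at $z = Z$.

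The main obstacle is part (2): one must confirm simultaneously that the threshold $c_\alpha$ produced implicitly by the level-$\alpha$ cut on the $p$-value coincides with the $m_1$ calibrated in Corollary \ref{OneSide1}, and that the continuous/symmetric properties of Tulap are strong enough to make the post-processing reject-rule deterministic in $Z$ (so no auxiliary randomization is lost). The other parts are essentially bookkeeping once the identity $p(\theta_0, Z) = 1 - F_0(Z)$ and the continuity/symmetry of $F_N$ (guaranteed by Lemma \ref{CLem}) are in hand.
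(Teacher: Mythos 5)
Your proposal is correct and follows essentially the same route as the paper: identify $p(\ta_0,Z)=1-F_0(Z)$ with $F_0$ the continuous marginal cdf of $X+N$ under $\ta_0$, deduce uniformity under the null, recognize $\{p(\ta_0,Z)\le\al\}$ as a threshold event $\{Z\ge c_\al\}$ whose conditional probability given $X=x$ is $F_N(x-c_\al)$ by symmetry (hence the Corollary \ref{OneSide1} test at size $\al$), and reduce part (3) to UMP-ness of the induced tests and part (4) to the binomial expectation of $F_N(x-Z)$. The only cosmetic differences are that you handle the composite null via stochastic dominance where the paper invokes MLR (equivalent here), and that your claim that $F_0$ is strictly increasing on all of $\RR$ should be restricted to the support of $X+N$ when $q>0$, which does not affect the existence and uniqueness of $c_\al=F_0^{-1}(1-\al)$ for $\al\in(0,1)$.
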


In the following corollary, we see that $1-p(\ta_0,Z) = P(X+N\leq Z\mid Z)$ is the corresponding $p$-value for $H_0: \ta\geq \ta_0$ versus $H_1: \ta<\ta_0$, with all the analogous properties.
\begin{cor}
In the same setup as Theorem \ref{PValueResult}, $1-p(\ta_0,Z) = P(X+N\leq Z\mid Z)$ is the stochastically smallest $(\ep,\de)$-DP $p$-value for $H_0: \ta\geq \ta_0$ versus $H_1: \ta<\ta_0$, and $\psi^*(x) = P_{Z\sim \mathrm{Tulap}(x,b,q)}(1-p(\ta_0,Z)\leq\alpha\mid X)$  agrees with the UMP-$\al$ test in Corollary \ref{OneSide1}. 
\end{cor}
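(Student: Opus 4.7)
The plan is to reduce the corollary to Theorem \ref{PValueResult} via the symmetrization $x \mapsto n - x$ on the data and $\ta \mapsto 1 - \ta$ on the parameter. The core observation is that complementation is a Hamming isometry of $\{0,1\}^n$, the Binomial family is closed under it, and the Tulap distribution is symmetric about its location, so every structure in Theorem \ref{PValueResult} transports cleanly to the reversed hypothesis.

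First I would establish three invariances. (i) If $\ul X$ has iid $\mathrm{Bern}(\ta)$ entries, then $\ul 1 - \ul X$ has iid $\mathrm{Bern}(1-\ta)$ entries, so $X' \defeq n - X \sim \mathrm{Binom}(n, 1-\ta)$, and the hypothesis $H_0: \ta \geq \ta_0$ versus $H_1: \ta < \ta_0$ becomes $H_0: \ta' \leq \ta_0'$ versus $H_1: \ta' > \ta_0'$ for $\ta_0' \defeq 1 - \ta_0$. (ii) Because the bitwise complement is a Hamming isometry of $\{0,1\}^n$, the class $\mscr D^n_{\ep,\de}$ is closed under the involution $\phi(\cdot) \mapsto \phi(n-\cdot)$, and the definitions of an $(\ep,\de)$-DP $p$-value and of stochastic domination are both preserved under this involution. (iii) Since $N \sim \mathrm{Tulap}(0,b,q)$ is symmetric and continuous, $-N \overset{d}{=} N$, and $Z \mid X \sim \mathrm{Tulap}(X,b,q)$ implies $(n-Z) \mid (n-X) \sim \mathrm{Tulap}(n-X, b, q)$.

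Using (iii) together with continuity of the Tulap law, $P(X + N = Z \mid Z) = 0$, so $P(X + N \leq Z \mid Z) = 1 - P(X + N \geq Z \mid Z) = 1 - p(\ta_0, Z)$ almost surely. In the primed coordinates $X' = n - X$, $Z' = n - Z$, the event $\{X + N \leq Z\}$ becomes $\{X' + (-N) \geq Z'\}$, which has the same probability as $\{X' + N \geq Z'\}$ by symmetry of $N$. Hence $1 - p(\ta_0, Z)$ equals $p(\ta_0', Z')$ for the primed problem. Applying Theorem \ref{PValueResult} to the primed data gives both conclusions: $p(\ta_0', Z')$ is the stochastically smallest $(\ep,\de)$-DP $p$-value for $H_0: \ta' \leq \ta_0'$ versus $H_1: \ta' > \ta_0'$, and the post-processed test $P_{Z' \sim \mathrm{Tulap}(X', b, q)}(p(\ta_0', Z') \leq \al \mid X')$ is UMP-$\al$ among $\mscr D^n_{\ep,\de}$. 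Translating back through (i) and (ii) yields the two claims of the corollary, and the resulting test coincides with $\psi^*$ from Corollary \ref{OneSide1} because both are characterized by the same primed-coordinate criterion.

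The main obstacle will be the bookkeeping in (ii): I need to verify that any competing $(\ep,\de)$-DP $p$-value $p(\ul X)$ for $H_0: \ta \geq \ta_0$ corresponds, via $\ul X \mapsto \ul 1 - \ul X$, to a legitimate competing $(\ep,\de)$-DP $p$-value in the primed problem having the same null distribution, so that the stochastic-smallest statement transfers faithfully. Once this is checked, and the continuity remark justifying the replacement of $\geq$ by $\leq$ is recorded, everything else is a direct unpacking of Theorem \ref{PValueResult}.
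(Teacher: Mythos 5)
Your proposal is correct and follows essentially the same route the paper itself uses: the reflection $(x,\ta)\mapsto(n-x,1-\ta)$ combined with the symmetry and continuity of the Tulap distribution is exactly the argument the paper deploys in its proof of Corollary \ref{OneSide1} (the $\psi^*$ half), and the corollary in question is intended as a direct transfer of Theorem \ref{PValueResult} through that same involution. The "bookkeeping" step you flag in (ii) is indeed routine, since complementation is a Hamming isometry and preserves both the DP constraints and the null distribution of any competing $p$-value.
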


\begin{algorithm}
\caption{UMP one-sided $p$-value for binomial data under $(\ep,\de)$-DP}
\scriptsize
INPUT: $n\in\NN$, $\ta_0\in (0,1)$, $\ep>0$, $\de\geq 0$,  $Z\sim \mathrm{Tulap}\l(X,b=e^{-\ep},q=\frac{2\de b}{1-b+2\de b}\r)$,
\begin{algorithmic}[1]
  \setlength\itemsep{0em}
  \STATE Set $F_N$ as the cdf of $N\sim \mathrm{Tulap}(0,b,q)$
\STATE Set $\ul F = (F_N(0-Z),F_N(1-Z),\ldots, F_N(n-Z))^\top$
\STATE Set $\ul B = (\binom n0 \ta_0^0(1-\ta_0)^{n-0}, \binom n1 \ta_0^1(1-\ta_0)^{n-1},\ldots, \binom nn \ta_0^n (1-\ta_0)^{n-n})^\top$
\end{algorithmic}
OUTPUT: $\ul F^\top \ul B$
\label{PValueAlgorithm}
\end{algorithm}

To implement Algorithm \ref{PValueAlgorithm}, we must be able to sample a Tulap random variable, which Algorithm \ref{SampleTulap} provides. The algorithm is based on the expression of $\mathrm{Tulap}(m,b,0)$ in terms of geometric and uniform variables, and uses rejection sampling when $q>0$ (see  \citealp[Chapter 11]{Bishop2006} for an introduction to rejection sampling). A detailed proof that the output of this  algorithm follows the correct distribution can be found in Lemma \ref{TulapLem} in Section \ref{s:Appendix}.
\begin{algorithm}
  \caption{Sample from Tulap distribution: $N\sim \mathrm{Tulap}(m,b,q)$}
\scriptsize
INPUT: $m\in \RR$, $b\in (0,1)$, $q\in [0,1)$.
\begin{algorithmic}[1]
  \setlength\itemsep{0em}
  \STATE Draw $G_1,G_2 \iid \mathrm{Geom}(1-b)$ and $U\sim \mathrm{Unif}(-1/2,1/2)$
\STATE Set $N=G_1-G_2+U+m$
\STATE If $F_{N_0}(N)<q/2$ or $F_{N_0}(N)>1-q/2$, where $N_0\sim \mathrm{Tulap}(m,b,0)$,
go to 1:
\end{algorithmic}
OUTPUT: $N$
\label{SampleTulap}
\end{algorithm}

\begin{remark}
  Since we know that releasing $Z=X+N$, where $N$ is a Tulap random variable, satisfies $(\ep,\de)$-DP, one could release $Z$ and compute all of the desired inference quantities as a post-processing of $Z$, at no additional cost to privacy. 
  In the remainder of the paper, we show that private two-sided $p$-values, confidence intervals, and confidence distributions can all be expressed as a post-processing of the summary statistic $Z$, leading to a more complete DP statistical analysis of binomial data. 
\end{remark}

\begin{remark}[Asymptotic Relative Efficiency]
 One may wonder about the asymptotic properties of the DP-UMP test compared to the non-private UMP test. It is not hard to show that 
 for any fixed $\ep>0$, $\de$, and $\ta_0\in (0,1)$, our proposed DP-UMP test has asymptotic relative efficiency (ARE) of 1, relative to the non-private UMP test (see \citealp[Section 14.3]{VanDerVaart2000} for an introduction to ARE).
    Let $X\sim \mathrm{Binom}(n,\ta_0)$. Define the two test statistics as $T_{1}=X$ and $T_{2}=X+N$, where $N\sim \mathrm{Tulap}(0,b,q)$. The ARE of the DP-UMP relative to the non-private UMP test is $(C_{2}/C_{1})^2$, where 
    \begin{align*}
 C_i = \lim_{n\rightarrow \infty} \l(\frac{d}{d\ta} \EE_{\ta} T_i\Big|_{\ta=\ta_0}\r)\Big/\sqrt{n \var_{\ta_0}(T_i)}, \text{ for } i=1,2. 
  \end{align*}
We compute $\EE_\ta T_i = n\ta$, $\var_{\ta_0}(T_1) = n\ta_0(1-\ta_0)$, and $\var_{\ta_0}(T_2) = n\ta_0(1-\ta_0) + \var(N)$. Since $\var(N)$ is a constant,  we have that $C_1=C_2 =(\ta_0(1-\ta_0))^{-1/2}$.
\end{remark}


\subsection{Bonferroni two-sided tests}\label{s:Bonferroni}

In this section as well as in Sections \ref{s:UMPU} and \ref{s:ApproxUnbiased}, we develop ``two-sided'' tests for hypotheses of the form $H_0: \ta =\ta_0$ versus $H_1: \ta\neq \ta_0$. One way of viewing this problem is as a \emph{multiple testing problem}, where we test both $H_0: \ta\leq \ta_0$ and $H_0: \ta\geq \ta_0$. It is well known that if $p_1$ is a $p$-value for $H_0: \ta\leq \ta_0$ and $p_2$ is a $p$-value for $H_0: \ta\geq \ta_0$, then $p = 2\min(p_1,p_2)$ is a $p$-value for $H_0: \ta=\ta_0$.

More generally, if we are interested in testing $H_0: \ta\in \cap_{i=1}^k \Ta_k$, and $p_i$ is a $p$-value for testing $H_0: \ta\in \Ta_k$, then $p = k \min\{p_1,\ldots, p_k\}$ is a $p$-value for  $H_0: \ta\in \cap_{i=1}^k \Ta_k$. We call this setting a \emph{multiple testing problem} or an \emph{intersection-union test} (See \citealp[ Section 8.2.3]{Casella2002}). The factor $k$ in the computation of $p$ is called the \emph{Bonferroni correction}.




\begin{prop}\label{prop:Bonferroni}
  Let $\ep>0$, $\de\geq 0$, $X\sim \mathrm{Binom}(n,\ta)$ where $\ta$ is unknown, and $Z\mid X\sim \mathrm{Tulap}\left(X,b=e^{-\ep},q=\frac{2\de b}{1-b+2\de b}\right)$. Then
  \begin{enumerate}[1)]
  \item $p'(\ta_0,Z) =2 \min(p(\ta_0,Z),1-p(\ta_0,Z))$ is a $(\ep,\de)$-DP $p$-value for $H_0: \ta=\ta_0$ versus $H_0: \ta\neq \ta_0$ for any $\ta_0 \in (0,1)$, where $p$ is the one-sided $p$-value from Theorem \ref{PValueResult}.
  \item The test $\phi'(X) = P_Z(p'(\ta_0,Z)\leq \alpha\mid X)$ is in $\mscr D_{\ep,\de}^n$ and can be written in the form $\phi'(x) = \phi^*(x) + \psi^*(x)$, where $\phi^*,\psi^*$ are as defined in Corollary \ref{OneSide1} at size $\alpha/2$.
  \item The test $\phi'(X)$ is uniformly more powerful than any level $\alpha/2$ test in $\mscr D_{\ep,\de}^n$ for $H_0: \ta=\ta_0$ versus $H_1: \ta\neq \ta_0$.
  \end{enumerate}
\end{prop}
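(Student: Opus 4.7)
My plan is to address each of the three parts in sequence, leaning on the one-sided DP-UMP results already established and a standard Bonferroni/intersection-union argument.

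For part (1), the hypothesis $H_0: \ta = \ta_0$ is the intersection $\{\ta \leq \ta_0\} \cap \{\ta \geq \ta_0\}$. Theorem \ref{PValueResult} states that $p(\ta_0,Z)$ is a valid $(\ep,\de)$-DP $p$-value for $H_0: \ta \leq \ta_0$, and the Corollary immediately following it gives that $1 - p(\ta_0,Z)$ is a valid $(\ep,\de)$-DP $p$-value for $H_0: \ta \geq \ta_0$. The classical intersection-union argument (as recalled in the paragraph preceding the proposition with $k=2$) then shows that $2\min(p, 1-p)$ is a $p$-value for the intersection. The DP guarantee is inherited from $Z$ via the post-processing property of DP together with Theorem \ref{TulapDP}.

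For part (2), the DP assertion is again immediate from post-processing of $Z$. For the decomposition, I would first verify that for $\alpha \in (0,1)$ the events $\{p \leq \alpha/2\}$ and $\{1 - p \leq \alpha/2\}$ are disjoint: if both held, then $p \leq \alpha/2 < 1/2$ and simultaneously $p \geq 1 - \alpha/2 > 1/2$, a contradiction. Therefore
\[\{p'(\ta_0,Z) \leq \alpha\} = \{p(\ta_0,Z) \leq \alpha/2\} \;\sqcup\; \{1 - p(\ta_0,Z) \leq \alpha/2\},\]
and taking $P_Z(\cdot \mid X)$ on both sides gives $\phi'(X) = P_Z(p \leq \alpha/2 \mid X) + P_Z(1-p \leq \alpha/2 \mid X)$. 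By Theorem \ref{PValueResult}(2) applied at level $\alpha/2$, the first summand is exactly $\phi^*(X)$ from Corollary \ref{OneSide1} at size $\alpha/2$, and by the Corollary to Theorem \ref{PValueResult} the second summand is $\psi^*(X)$ at size $\alpha/2$. Membership of $\phi'$ in $\mscr D_{\ep,\de}^n$ follows from the disjointness (which ensures $\phi'(x) \in [0,1]$) together with the fact that $\mscr D_{\ep,\de}^n$ is closed under this sum since both $\phi^*$ and $\psi^*$ lie in it.

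For part (3), which is the main payoff, I would exploit the fact that Theorem \ref{UMP2} and Corollary \ref{OneSide1} establish $\phi^*$ and $\psi^*$ as DP-UMP at level $\alpha/2$ using only the simple-null constraint at $\ta_0$. Let $\phi \in \mscr D_{\ep,\de}^n$ satisfy $E_{\ta_0}\phi \leq \alpha/2$. For any $\ta_1 > \ta_0$, Theorem \ref{UMP2} applied to the simple-vs-simple test $\ta_0$ vs $\ta_1$ at level $\alpha/2$ gives $\beta_{\phi^*}(\ta_1) \geq \beta_\phi(\ta_1)$, and since $\beta_{\psi^*}(\ta_1) \geq 0$ we conclude $\beta_{\phi'}(\ta_1) = \beta_{\phi^*}(\ta_1) + \beta_{\psi^*}(\ta_1) \geq \beta_\phi(\ta_1)$. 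The case $\ta_1 < \ta_0$ is symmetric, using $\psi^*$ in place of $\phi^*$. The main subtlety to handle cleanly is that the comparison class in (3) is tests at level $\alpha/2$ (not $\alpha$), and this asymmetry is precisely what allows each one-sided DP-UMP at level $\alpha/2$ to dominate the corresponding tail; no finer argument (like a two-sided UMP comparison) is needed here, which is why this result is strictly weaker than the UMPU claim pursued later in Section \ref{s:UMPU}.
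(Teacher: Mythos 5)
Your proposal is correct and follows essentially the same route as the paper: decompose $\phi'=\phi^*+\psi^*$ via the (disjoint) events $\{p\leq\alpha/2\}$ and $\{1-p\leq\alpha/2\}$, identify the summands with the one-sided DP-UMP tests at size $\alpha/2$, and then dominate any level-$\alpha/2$ competitor on each side of $\ta_0$ using Corollary \ref{OneSide1} plus nonnegativity of the other summand. One caution: your auxiliary claim that $\mscr D^n_{\ep,\de}$ is ``closed under this sum'' is not a valid general principle --- summing the constraints \eqref{DP1}--\eqref{DP4} for $\phi^*$ and $\psi^*$ yields slack $2\de$ rather than $\de$ and does not preserve the $(1-\phi)$ inequalities --- so membership of $\phi'$ in $\mscr D^n_{\ep,\de}$ should rest solely on your first (and correct) justification, namely that $\phi'(x)=P_Z(p'(\ta_0,Z)\leq\alpha\mid X=x)$ is the probability of a fixed measurable event under the Tulap distribution, which satisfies $(\ep,\de)$-DP by Theorem \ref{TulapDP} and post-processing.
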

\begin{proof}
  First $p'(\ta_0,Z)$ satisfies $(\ep,\de)$-DP by post-processing. To verify 2), consider
 Let $\alpha\in (0,1)$, and let   \begin{align*}
      P_{N\sim \mathrm{Tulap}}(p'(\ta_0,X+N)\leq \alpha\mid X)
                                    &=P(2\min\{p(\ta_0,X+N),1-p(\ta_0,X+N)\}\leq \alpha\mid X)\\
                                    &=P(p(\ta_0,X+N)\leq \alpha/2\mid X) \\
                                    &\phantom{=}+ P(p(\ta_0,X+N)\geq 1-\alpha/2\mid X)\\
                                    &\phantom{=}- P( 1-\alpha/2\leq p(\ta_0,X+N)\leq \alpha/2\mid X)\\
      &= \phi^*(X) + \psi^*(X)  - 0,
    \end{align*}
    where we use the fact that $1-\alpha/2\geq \alpha/2$, implying that the last probability is zero. To see that $p'(\ta_0,Z)$ is a $p$-value, we compute
    \begin{align*}
      P_{X\sim \ta_0,N}(p'(\ta_0,X+N)\leq \alpha) &= \EE_{X\sim \ta_0} P(p'(\ta_0,X+N)\leq \alpha\mid X)\\
                                                  &=\EE_{X\sim \ta_0}[\phi^*(X) + \psi^*(X)]\\
                                                  &=\alpha/2+\alpha/2.
    \end{align*}
    Finally, to see that $\phi'$ is more powerful than any level $\alpha/2$ test, notice that $\phi^*$ and $\psi^*$ are the most powerful DP tests depending on whether $\ta>\ta_0$ or $\ta<\ta_0$, respectively. Since $\phi' = \phi^* + \psi^*$, it is more powerful than either of these tests. 
\end{proof}

The major benefit of the tests  in Proposition \ref{prop:Bonferroni} is in their simplicity. Generally, they are not optimal in any sense. However, since they are more powerful than any test of level $\alpha/2$, they are not unreasonable tests, and perform relatively well compared to any best case scenario. 


\subsection{DP-UMP unbiased two-sided tests}\label{s:UMPU}
In Section \ref{s:Bonferroni}, we developed two-sided DP tests using a Bonferroni correction. While we were able to show that they are preferred over any level $\alpha/2$ test, they are not optimal when compared to other size $\alpha$ tests.  

In this section, we continue our exploration of DP tests for $H_0: \ta = \ta_0$ versus $H_1: \ta\neq \ta_0$. 
 While one may hope to develop UMP tests in this setting, it is well known that among all tests, there is no UMP test, even without privacy. Indeed, the left-side DP-UMP and the right-side DP-UMP have higher power in different regions. Instead, we must restrict to a smaller class of tests. In classical statistics, it is common to restrict to \emph{unbiased} tests.
We show that there exists a DP-UMP unbiased test (DP-UMPU) for $H_0: \ta=\ta_0$ versus $H_1: \ta\neq \ta_0$, and write the test in terms of the Tulap distribution.

\begin{defn}
  [Unbiased Test]
  A test $\phi: \mscr X^n \rightarrow[0,1]$ is unbiased for $H_0: \ta\in \Ta_0$ versus $H_1: \ta\in \Ta_1$ if for all $\ta_0 \in \Ta_0$ and all $\ta_1\in \Ta_1$ we have that $\EE_{\ta_0} \phi\leq \EE_{\ta_1}\phi.$
\end{defn}
Intuitively, unbiased means that the marginal probability of `Reject' is always higher in the alternative than in the null.

While in the one-sided case, the DP-UMP test increases as much as possible in terms of either $x$ or $-x$, now that we restrict to unbiased tests, the DP-UMP needs to increase as fast as possible in both directions. It turns out that there exists a center $k$, where the DP-UMPU test is symmetric about $k$, and increases as much as possible in both directions, subject to \eqref{DP1}-\eqref{DP4}. This gives the form in Theorem \ref{UMPU}.

\begin{thm}\label{UMPU}
  Let $X\sim \mathrm{Binom}(n,\ta)$, $0<\ta_0<1$ and $0<\al<1$. There exists a UMPU size $\al$ test among $\mscr D_{\ep,\de}^n$ for $H_0: \ta=\ta_0$ versus $H_1: \ta\neq \ta_0$, which is of the form
  \[\phi^*(x) = \begin{cases}
      F_N(x-k-m)&\text{ if } x\geq k\\
      F_N(k-x-m)&\text{ if } x<k,
    \end{cases}\]
  where $k$ and $m$ are chosen such that $\EE_{X\sim \ta_0} (X-n\ta_0) \phi(X)=0$ and $\EE_{X\sim \ta_0} \phi(X) = \al$.
\end{thm}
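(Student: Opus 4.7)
The plan is to follow the classical Lehmann--Scheff\'e derivation of the UMPU test for a binomial proportion, adapted at every step to respect the DP constraints \eqref{DP1}--\eqref{DP4} and using the extremal-shape characterization of Lemma \ref{RecurrenceLem2}. First I would reduce unbiasedness to a Neyman-structure condition. Since the power $\beta_\phi(\ta)=\sum_{x=0}^n \phi(x)\binom{n}{x}\ta^x(1-\ta)^{n-x}$ is a polynomial in $\ta$ and any unbiased level-$\al$ test attains an interior minimum of $\beta_\phi$ at $\ta_0$, differentiating and setting the derivative to zero gives
\[
\EE_{\ta_0}[(X-n\ta_0)\phi(X)]=0,
\]
on top of $\EE_{\ta_0}\phi=\al$. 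So the DP-UMPU problem becomes: among $\phi\in \mscr D_{\ep,\de}^n$ satisfying these two linear moment conditions, maximize $\EE_{\ta_1}\phi$ for every $\ta_1\neq\ta_0$.

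The second step is a generalized Neyman--Pearson argument in the spirit of Lemma \ref{BingLem} but with two side conditions. For fixed $\ta_1\neq\ta_0$ and multipliers $\la_1,\la_2$, consider the signed ``density''
\[
h(x)\defeq b_{\ta_1}(x)-\l(\la_1+\la_2(x-n\ta_0)\r)b_{\ta_0}(x),\qquad b_{\ta}(x)=\binom{n}{x}\ta^x(1-\ta)^{n-x}.
\]
Since $b_{\ta_1}/b_{\ta_0}$ is a monotone exponential in $x$ while $\la_1+\la_2(x-n\ta_0)$ is affine, one can choose $\la_1,\la_2$ so that $h$ is negative on an interval around some integer $k$ and positive on both tails $x\ll k$, $x\gg k$. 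Without DP constraints, the maximizer of $\int \phi\, h\, d\mu$ would equal $1$ on the tails and $0$ in the middle. Under DP, the most extreme test compatible with \eqref{DP1}--\eqref{DP4} must instead start from a minimum near $k$ and rise as fast as possible in both directions. Applying Lemma \ref{RecurrenceLem2} separately on the right branch $x\geq k$ and on the reflected left branch $x\leq k$ identifies this extremal test as exactly the V-shaped form in the theorem statement, with Tulap CDFs on each side. Membership in $\mscr D_{\ep,\de}^n$ follows from the recurrences of Lemma \ref{RecurrenceLem2} on each branch; the only nontrivial check between the two branches is at $x=k-1$ and $x=k$, where $\phi^*(k-1)=F_N(1-m)$ and $\phi^*(k)=F_N(-m)$, and these satisfy \eqref{DP1}--\eqref{DP4} since the Tulap cdf obeys the same monotonicity and recurrence inequalities.

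For calibration, I would argue the existence of a pair $(k,m)$ solving both moment conditions simultaneously. For each fixed $k\in\{0,1,\ldots,n\}$, the size $\EE_{\ta_0}\phi^*$ is continuous and strictly monotone in $m\in\RR$, so there is a unique $m_k$ giving $\EE_{\ta_0}\phi^*=\al$. The unbiasedness functional $k\mapsto \EE_{\ta_0}[(X-n\ta_0)\phi^*_{k,m_k}]$ then changes sign as $k$ traverses $\{0,\ldots,n\}$, since shifting the trough to the right transfers mass from the upper tail to the lower tail. An intermediate-value argument --- after allowing a convex combination of the V-shapes centered at adjacent integers $k$ and $k+1$, which remains in $\mscr D_{\ep,\de}^n$ by convexity of the constraint set --- produces a calibrated $(k,m)$ meeting both requirements.

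Optimality then follows from a two-sided version of Lemma \ref{BingLem}. For any competing $\phi\in \mscr D_{\ep,\de}^n$ satisfying the two moment conditions, the extremal construction of $\phi^*$ forces $\phi^*\geq\phi$ on the two tails and $\phi^*\leq\phi$ in a middle interval around $k$, precisely the sign pattern of $h(x)$ for the multipliers above. Hence $\int(\phi^*-\phi)\,h\,d\mu\geq 0$, and expanding the three integrals and using the two moment equalities yields $\EE_{\ta_1}\phi^*\geq \EE_{\ta_1}\phi$, as required. I expect the main obstacle to be the calibration argument: with only two free parameters $(k,m)$ instead of the four $(c_1,c_2,\gamma_1,\gamma_2)$ available in the classical case, I will have to handle the discrete jumps across integer $k$ by a convex-combination interpolation and verify that this interpolation keeps both moment functionals jointly continuous while remaining inside $\mscr D_{\ep,\de}^n$.
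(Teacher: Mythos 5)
Your overall strategy is the same as the paper's: reduce unbiasedness to the derivative side condition $\EE_{\ta_0}[(X-n\ta_0)\phi(X)]=0$, run a generalized Neyman--Pearson argument with multipliers $\la_1,\la_2$ chosen so that $b_{\ta_1}-(\la_1+\la_2(x-n\ta_0))b_{\ta_0}$ is positive on two tails and negative on a middle interval (using convexity of the likelihood ratio against an affine function), identify the extremal DP test as the V-shape built from Lemma \ref{RecurrenceLem2} on each branch, establish the two-crossing sign pattern against any competitor, and sum $(\phi^*-\phi)(f_3-k_1f_1-k_2f_2)\geq 0$. All of that matches the paper's proof.

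The one genuine gap is in your calibration step. You restrict the trough $k$ to integers and propose to hit both moment conditions by taking a convex combination of the V-shapes centered at adjacent integers $k$ and $k+1$. That combination does stay in $\mscr D_{\ep,\de}^n$ and does satisfy the two linear moment conditions for a suitable mixing weight, but it is no longer of the form claimed in the theorem, and more importantly it is no longer extremal: the maximal one-step update $\phi(x+1)=\min\{e^\ep\phi(x)+\de,\;1-e^{-\ep}(1-\phi(x))+e^{-\ep}\de,\;1\}$ is a \emph{concave} function of $\phi(x)$, so a convex combination of two tests that each saturate it generally increases \emph{strictly slower} than the maximal rate. Your single-crossing comparison with a competitor $\phi$ (``$\phi^*\geq\phi$ on the tails because $\phi^*$ rises as fast as possible'') then breaks down, and with it the optimality argument. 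The paper avoids this by letting $k$ range over the reals: the form $\phi^*(x)=F_N(|x-k|-m)$ makes sense for non-integer $k$, each branch still satisfies the maximal recurrence on consecutive integers, and both functionals $g_1(k,m)=\EE_{\ta_0}\phi^*-\al$ and $g_2(k,m)=\EE_{\ta_0}[(X-n\ta_0)\phi^*]$ are jointly continuous, so one solves $g_1=0$ for $m=m(k)$ and applies the intermediate value theorem to $k\mapsto g_2(k,m(k))$ as $k$ runs from below $0$ to above $n$. Replacing your interpolation step with this continuous-$k$ argument closes the gap; the rest of your proof stands.
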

\begin{proof}
  [Proof Sketch.]
   We must show that there exists $k$ and $m$ which solve the two equations, and then argue that $\phi^*$ is UMP among all level $\al$ tests in $\mscr D_{\ep,\de}^n$. The proof is inspired by the Generalized Neyman Pearson Lemma \citep[Theorem 3.6.1]{Lehmann2008}, and has a similar strategy as Theorem \ref{OneSide1}.

   Let $\ta_1 \neq \ta_0$. We will show that $\phi*$ is most powerful among unbiased size $\al$ tests in $\mscr D_{\ep,\de}^n$ for testing $H_0: \ta=\ta_0$ versus $H_1: \ta=\ta_1$.  Set $f_1(x) = \binom nx \ta_0^x(1-\ta_0)^{n-x}$, $f_2(x) = (x-n\ta_0) \binom nx \ta_0^x(1-\ta_0)^{n-x}$, and $f_3(x) = \binom nx \ta_1^x(1-\ta_1)^{n-x}$. Let $\phi\in \mscr D_{\ep,\de}^n$ be any unbiased size $\alpha$ test, not identical to $\phi^*$. The proof requires verifying the following facts:
   \begin{enumerate}
   \item There exists $k,m\in \RR$ such that $\phi^*$ satisfies  $\EE_{X\sim \ta_0} (X-n\ta_0) \phi^*(X)=0$ and $\EE_{X\sim \ta_0} \phi^*(X) = \al$.
     \item  Since $\phi$ is unbiased, its power must have a local minimum at $\ta_0$ so, $\frac d {d\theta} \EE_\ta \phi\Big|_{\ta=\ta_0}=0$. This is equivalent to requiring that $\sum \phi f_2=0$.
     \item 
       There exists $y_l \leq k\leq y_u$ (integers) such that $\phi^*(x) \geq \phi(x)$ when $x\geq y_u$ or $x\leq y_l$, and $\phi^*(x)\leq \phi(x)$ when $y_l\leq x\leq y_u$.
   \item There exists $k_1,k_2\in \RR$ such that $f_3(x)\geq k_1f_1(x) + k_2f_2(x)$ when $x\not\in (y_l,y_u)$ and $f_3(x)\leq k_1f_1(x) + k_2f_2(x)$ when $x\in (y_l,y_u)$.
   \end{enumerate}
   We have established that $\phi^*(x)\geq \phi(x)$ for $f_3(x)\geq k_1f_1(x)+k_2f_2(x)$ and $\phi^*(x)\leq \phi(x)$ for $f_3(x) \leq k_1f_1(x)+k_2f_2(x)$. Then for all $x\in \{0,1,2,\ldots, n\}$,
      \[(\phi^*(x) - \phi(x))(f_3(x)-k_1f_1(x)-k_2f_2(x))\geq 0.\]
      Summing both sides over $x$ implies
        $\EE_{X\sim \ta_1}\phi^*(X)\geq \EE_{X\sim \ta_1}\phi(X).$
        Note that by taking $\phi(x) =\alpha$, we see that $\phi^*$ is indeed unbiased. 
      Since our argument did not depend on the choice of $\ta_1$ or $\phi$, we conclude that $\phi^*$ is UMP-$\al$ among unbiased tests in $\mscr D_{\ep,\de}^n$. 
\end{proof}

  In Theorem \ref{OneSide1}, we were able to derive DP $p$-values that agree with these tests. However,
   in Theorem \ref{UMPU}, the quantity $k$ depends on $n$, $\alpha$, and $\ta_0$, and there is no clear functional form of $k$ in terms of these quantities. Thus it does not seem that there is a simple formula for the $p$-values of the test in Theorem \ref{UMPU}. The following corollary shows that in the case when $\ta_0 = \frac 12$ and $k= \frac n2$, a convenient form for the $p$-value does exist. 

\begin{cor}\label{UMPUhalf}
  In the setup of Theorem \ref{UMPU}, if $\ta_0 = \frac 12$ then $k = \frac n2$. Let $Z\mid X\sim \mathrm{Tulap}\l(X,b=e^{-\ep},\frac{2\de b}{1-b+2\de b}\r)$, then the corresponding $p$-value is
  \[p(Z) = P_{X\sim \ta_0,N}\left(|X+N-n/2|\geq |Z-n/2|\Big| Z\right),\] which can be computed via Algorithm \ref{NearlyUnbiasedPvalue}, setting $\ta_0=\frac 12$. 
\end{cor}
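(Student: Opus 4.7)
The proof reduces to three observations. First, when $\ta_0 = 1/2$ one may take $k = n/2$ in Theorem \ref{UMPU}. The $\mathrm{Binom}(n,1/2)$ pmf is symmetric about $n/2$, i.e., $\binom{n}{x} = \binom{n}{n-x}$, while the form $\phi^*(x) = F_N(|x-k|-m)$ with $k = n/2$ satisfies $\phi^*(n-x) = \phi^*(x)$. Since the factor $(x - n/2)$ is anti-symmetric about $n/2$, pairing the $x$ and $n-x$ terms in $\sum_x \binom{n}{x} 2^{-n} (x - n/2) \phi^*(x)$ makes them cancel, so the unbiasedness constraint $\EE_{1/2}[(X - n/2)\phi^*(X)] = 0$ holds automatically for every $m$. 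The size condition $\EE_{1/2}\phi^* = \al$ then fixes $m$, and Theorem \ref{UMPU} yields the UMPU test in this form.

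Second, $p(Z)$ is a valid $(\ep,\de)$-DP $p$-value. Differential privacy is immediate by post-processing, since $p(Z)$ depends only on the DP release $Z$ (Theorem \ref{TulapDP}). For validity, because $N \sim \mathrm{Tulap}(0,b,q)$ is absolutely continuous, $Z = X + N$ is continuously distributed under the null, so $|Z - n/2|$ has a continuous cdf under $\ta = 1/2$. The probability integral transform then gives $p(Z) \sim \mathrm{Unif}(0,1)$ under the null, so $\PP_{1/2}(p(Z) \leq \al) = \al$, fulfilling Definition \ref{def:pvalue}. Furthermore, since $p$ is monotone decreasing in $|Z - n/2|$, the rejection rule $\{p(Z) \leq \al\}$ coincides with $\{|Z - n/2| \geq c_\al\}$ for some threshold $c_\al$, and the symmetry used in step one makes this two-tailed rule unbiased at $\ta_0 = 1/2$.

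Finally, for the algorithmic computation, I would split $|X + N - n/2| \geq |Z - n/2|$ into its two disjoint pieces:
\[
p(Z) = \PP(X + N \geq n/2 + |Z - n/2| \mid Z) + \PP(X + N \leq n/2 - |Z - n/2| \mid Z).
\]
Each summand takes the form of an inner product $\sum_{x=0}^n \binom{n}{x}(1/2)^n F_N(s - x)$ at an appropriate shift $s$, i.e., precisely the computation carried out by Algorithm \ref{PValueAlgorithm}; Algorithm \ref{NearlyUnbiasedPvalue} simply evaluates this formula twice, once for each tail, with $\ta_0 = 1/2$. The principal subtlety, and the main obstacle, is the identification step: the UMPU test of Theorem \ref{UMPU} is a randomized test of $X$ alone and is not literally a deterministic function of $Z$, so strictly $p(Z)$ corresponds to the two-tailed post-processing rule on $Z$ rather than to $\phi^*$ itself. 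What rescues the equivalence at $\ta_0 = 1/2$ is exactly the symmetry of step one, which forces this two-tailed rule to be unbiased and to serve as the natural DP $p$-value in the symmetric case.
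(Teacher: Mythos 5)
Your proof is correct and its core argument is exactly the paper's: the paper's proof of Corollary \ref{UMPUhalf} consists precisely of the substitution $y=n-x$ showing $\EE_{1/2}[(X-\tfrac n2)\phi^*(X)]=-\EE_{1/2}[(X-\tfrac n2)\phi^*(X)]=0$ by the symmetry of $\binom{n}{\cdot}$ and $\phi^*$ about $n/2$, which is your step one. Your remaining steps (validity of $p(Z)$ via the probability integral transform, DP by post-processing, and the two-tailed decomposition matching Algorithm \ref{NearlyUnbiasedPvalue}) correctly reproduce what the paper establishes separately in Proposition \ref{prop:Nearly}, so there is no gap.
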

\begin{proof}[Proof Sketch.]
  It suffices to check that when $k=\frac n2$, the test is unbiased. This is done using the symmetry of both $\phi(x)$ and $f_X(x)$ about $\frac n2$. 
\end{proof}
\begin{remark}
While Corollary \ref{UMPUhalf} only applies in the case that $\ta_0 = \frac 12$, this is in fact a common setting. This arises when we are interested in testing whether two mutually exclusive (and collectively exhaustive) events are equally likely, such as whether the probability of being born male versus female is $\frac 12$. In Section \ref{DistributionFree}, we see that for the sign and median test, when testing whether the medians of two random variables are equal or not, this can be expressed as testing $H_0: \ta = \frac 12$ versus $H_1: \ta\neq \frac 12$.
\end{remark}

\subsection{Asymptotically unbiased two sided tests}\label{s:ApproxUnbiased}
In the previous section, we developed the DP-UMPU two-sided test, and showed that only in the case where $\ta_0 = 1/2$ we can easily compute $p$-values. Otherwise, since $k$ depends on $\alpha$ and there is no natural test statistic, the problem is more challenging. 
Nevertheless, based on Corollary \ref{UMPUhalf} we conjecture that $|X+N-n\ta_0|$ is a test statistic which provides a close approximation to the DP-UMPU test. In Section \ref{s:Simulations2}, we see that this asymptotically unbiased test performs very similarly to the UMPU test from Section \ref{s:UMPU} for finite samples. 

\begin{algorithm}
  \caption{Asymptotically unbiased DP $p$-value}
  \scriptsize
  INPUT: $n\in \NN$, $\ta_0 \in (0,1)$, $\ep>0$, $\de\geq 0$, $Z\sim \mathrm{Tulap}\l(X,b=e^{-\ep},q = \frac{2\de b}{1-b+2\de b}\r)$
  \begin{algorithmic}[1]
    \setlength\itemsep{0em}
    \STATE Set $T = |Z-n\ta_0|$
    \STATE Call $p(\ta, Z)$ the $p$-value computed by Algorithm \ref{PValueAlgorithm}.
    \STATE Set $p = p(\ta_0, T+n\ta_0) + 1-p(\ta_0, n\ta_0-T)$
  \end{algorithmic}
  OUTPUT: $p$
  \label{NearlyUnbiasedPvalue}
\end{algorithm}

\begin{prop}\label{prop:Nearly}
  The output of Algorithm \ref{NearlyUnbiasedPvalue} is
  \[p(\ta_0,Z) = P_{X\sim \ta_0,N}\left(|X+N-n\ta_0|>|Z-n\ta_0|\Big|Z\right),\] which is a $p$-value for $H_0: \ta=\ta_0$ versus $H_0: \ta\neq \ta_0$ and satisfies $(\ep,\de)$-DP. The corresponding test $\phi(x) = P_N(p(\ta_0,Z)\leq \alpha\mid X)$ is of the form of Theorem \ref{UMPU}, with $k = n\ta_0$. 
\end{prop}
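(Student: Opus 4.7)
The plan is to verify three claims in sequence: (a) the algorithm's output has the closed form $P_{X,N}(|X+N-n\ta_0|>|Z-n\ta_0|\mid Z)$; (b) this quantity is a valid $(\ep,\de)$-DP $p$-value; and (c) the induced test $\phi(x)=P_N(p(\ta_0,Z)\le\alpha\mid X=x)$ is of the form in Theorem \ref{UMPU} with $k=n\ta_0$.

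First, I would unwind the two invocations of the one-sided $p$-value using Theorem \ref{PValueResult}. Writing $T=|Z-n\ta_0|\ge 0$ and expanding,
\[
p(\ta_0,T{+}n\ta_0)+1-p(\ta_0,n\ta_0{-}T)=P(X{+}N{-}n\ta_0\ge T)+P(X{+}N{-}n\ta_0< -T),
\]
and since $T\ge 0$ the two events are disjoint, their sum collapses to $P(|X{+}N{-}n\ta_0|\ge T\mid Z)$, which matches the stated expression (the continuity of $N$ lets me freely swap $\ge$ and $>$ on null-measure sets). This is the main computational step and is essentially book-keeping.

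Second, I would handle validity and privacy together. For validity under $H_0$, let $W=|Z-n\ta_0|$ and let $W'=|X'+N'-n\ta_0|$ with $(X',N')$ an independent copy of $(X,N)$; since $N$ is continuous, $W$ and $W'$ share a common continuous distribution, hence $p(\ta_0,Z)=P(W'\ge W\mid W)=1-F_W(W)$, which is $\mathrm{Unif}(0,1)$ under $H_0$. Thus $P_{\ta_0}(p\le\alpha)=\alpha$, proving the $p$-value property. For privacy, $Z\sim\mathrm{Tulap}(X,b,q)$ satisfies $(\ep,\de)$-DP by Theorem \ref{TulapDP}, and since $p$ is computed as a deterministic function of $Z$ together with the public quantities $n,\ta_0,\ep,\de$, the post-processing property of DP (cited earlier in Section \ref{pValues}) transfers the guarantee to $p$.

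Finally, for the form of the test, observe that $p(\ta_0,z)\le\alpha$ is equivalent to $|z-n\ta_0|\ge c_\alpha$ where $c_\alpha$ is the $(1-\alpha)$-quantile of $W'$. Therefore
\[
\phi(x)=P_N(|x+N-n\ta_0|\ge c_\alpha)=F_N(|x-n\ta_0|-c_\alpha)+\bigl(1-F_N(|x-n\ta_0|+c_\alpha)\bigr),
\]
after using the symmetry $F_N(-t)=1-F_N(t)$ of the Tulap distribution about $0$. This is symmetric about $x=n\ta_0$ (using $-N\stackrel{d}{=}N$), and both terms depend on $x$ only through $|x-n\ta_0|$, matching the piecewise structure of Theorem \ref{UMPU}: the dominant term on $\{x\ge n\ta_0\}$ is $F_N(x-n\ta_0-c_\alpha)$ and on $\{x<n\ta_0\}$ is $F_N(n\ta_0-x-c_\alpha)$, which is precisely the shape $\phi^*(x)=F_N(|x-k|-m)$ with $k=n\ta_0$ and $m=c_\alpha$. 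The main obstacle here is being precise about what ``of the form of Theorem \ref{UMPU}'' should mean: the algorithm's test carries an additional small term $1-F_N(|x-n\ta_0|+c_\alpha)$, so I would interpret the claim as identifying the center of symmetry $k=n\ta_0$ and the symmetric ``V-shaped'' structure, with the extra term vanishing in the tails — which is exactly what distinguishes this test as only \emph{asymptotically} unbiased rather than truly UMPU.
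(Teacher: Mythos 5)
Your derivation of the closed form and of the $p$-value property is correct and follows essentially the same route as the paper: both unwind the two one-sided $p$-values from Theorem \ref{PValueResult}, use disjointness of the two tail events for $T=|Z-n\ta_0|\ge 0$ to collapse the sum into $P(|X+N-n\ta_0|\ge T\mid Z)=1-F_W(T)$ with $W=|X+N-n\ta_0|$ under the null, and conclude uniformity from continuity of $F_W$; privacy is post-processing of $Z$ in both.

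Where you go beyond the paper is the final claim. The paper's proof of Proposition \ref{prop:Nearly} does not address the assertion that $\phi(x)=P_N(p(\ta_0,Z)\le\alpha\mid X)$ is of the form of Theorem \ref{UMPU}, and your explicit computation $\phi(x)=F_N(|x-n\ta_0|-c_\alpha)+\bigl(1-F_N(|x-n\ta_0|+c_\alpha)\bigr)$ is right. Your caveat is substantive: because of the second term, $\phi$ is symmetric about $k=n\ta_0$ but is \emph{not} literally of the form $F_N(|x-k|-m)$ (matching the two expressions in both the tail regime and near $x=n\ta_0$ forces incompatible values of $m$ except for special parameter choices), so the proposition's last sentence holds only in the looser sense you describe --- same center of symmetry and the same V-shaped structure, with the extra term decaying in the tails. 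The same imprecision propagates to the paper's proof of Proposition \ref{prop:Unbiased}, which writes $\phi$ as exactly $F_N(|x-n\ta_0|-m)$; the asymptotic-unbiasedness argument there survives because it only uses the symmetry of $\phi$ about $n\ta_0$, which your two-term formula does satisfy.
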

\begin{proof}[Proof Sketch.]
 It is easy to verify that when $\ta = \ta_0$, $p(\ta_0, Z)$ is marginally distributed as $U(0,1)$.  Since $p(\ta_0, Z) \sim U(0,1)$, we have that $P_{\ta_0}(p(\ta_0, Z)\geq \al) = \al.$ 
 The $p$-value satisfies $(\ep,\de)$-DP since it is a post-processing of $Z$.
\end{proof}

\begin{prop}\label{prop:Unbiased}
  In the setting of Theorem \ref{UMPU}, holding $\ep$, $\de$, $\al$, and $\theta_0$ all fixed, the test in Proposition \ref{prop:Nearly} is asymptotically unbiased.
\end{prop}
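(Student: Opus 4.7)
The plan is to prove the stronger claim that for each fixed $\ta_1\neq \ta_0$, the power $\EE_{\ta_1}\phi_n(X)\to 1$ as $n\to\infty$; since $1>\al$, this immediately yields asymptotic unbiasedness. With $k=n\ta_0$ substituted into the form of Theorem \ref{UMPU}, the test from Proposition \ref{prop:Nearly} simplifies to $\phi_n(x) = F_N(|x-n\ta_0|-m_n)$, where $F_N$ is the cdf of $\mathrm{Tulap}(0,b,q)$, the parameters $b=e^{-\ep}$ and $q=\tfrac{2\de b}{1-b+2\de b}$ depend only on $\ep,\de$ (not on $n$), and $m_n$ is the normalizing constant determined by the level condition $\EE_{\ta_0}\phi_n(X)=\al$. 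The crucial structural fact is that $F_N$ is a fixed cdf whose transition width is of order $1$ as $n\to\infty$.

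The first step is to pin down the growth rate of $m_n$. Let $s_n=\sqrt{n\ta_0(1-\ta_0)}$ and $\tilde m_n=m_n/s_n$. By the CLT, under $\ta_0$, $(X-n\ta_0)/s_n\Rightarrow Z\sim \mathcal{N}(0,1)$, so $Y_n:=|X-n\ta_0|/s_n\Rightarrow |Z|$. Rewriting the level condition as
\[
\al \;=\; \EE_{\ta_0}\, F_N\bigl(s_n(Y_n-\tilde m_n)\bigr),
\]
I would observe that for each $y\neq \tilde m$, $F_N\bigl(s_n(y-\tilde m_n)\bigr)\to \ONE\{y>\tilde m\}$ whenever $\tilde m_n\to \tilde m$. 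A subsequence argument then shows that $\tilde m_n$ is necessarily bounded (otherwise the right side tends to $0$ or $1$, contradicting $\al\in(0,1)$), and every convergent subsequence $\tilde m_{n_k}\to \tilde m$ must satisfy $P(|Z|>\tilde m)=\al$, forcing $\tilde m=z_{1-\al/2}$. Consequently $\tilde m_n\to z_{1-\al/2}$, and in particular $m_n=O(\sqrt n)$.

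The second step is a direct power computation. For any fixed $\ta_1\neq \ta_0$, the law of large numbers gives $X/n\to \ta_1$ in probability under $\ta_1$, so $|X-n\ta_0|/n\to |\ta_1-\ta_0|>0$. Since $m_n=O(\sqrt n)$, the quantity $|X-n\ta_0|-m_n\to+\infty$ in $\ta_1$-probability. Because $F_N(t)\to 1$ as $t\to\infty$ and $0\le \phi_n\le 1$, bounded convergence gives $\EE_{\ta_1}\phi_n(X)\to 1$. In particular $\EE_{\ta_1}\phi_n(X)\geq \al$ for all sufficiently large $n$, which is the asymptotic unbiasedness to be established.

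The main obstacle is the calibration step: justifying the passage $\EE_{\ta_0} F_N(s_n(Y_n-\tilde m_n))\to P(|Z|>\tilde m)$ when $Y_n\Rightarrow |Z|$ is not immediate from weak convergence, since the integrand becomes discontinuous in the limit. This can be handled by combining the Portmanteau theorem (applied at continuity points of $|Z|$, which is every real since $|Z|$ is absolutely continuous) with the exponential tail decay of $F_N$ evident in Definition \ref{TulapDefn}, the latter supplying a uniform-in-$n$ envelope that legitimizes dominated convergence. Once the calibration is in place, the power computation and the conclusion are routine.
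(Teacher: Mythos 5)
Your argument is internally sound, but it establishes a different---and substantially weaker---reading of ``asymptotically unbiased'' than the one the paper is after. What you prove is the pointwise statement that for each \emph{fixed} $\ta_1\neq\ta_0$ the power tends to $1$, i.e.\ the test is consistent and hence eventually exceeds $\al$ at every fixed alternative. The two technical steps are fine: the calibration $m_n/\sqrt{n\ta_0(1-\ta_0)}\to z_{1-\al/2}$ follows from the subsequence argument you give (the limit interchange can be made rigorous exactly as you sketch, by sandwiching $F_N(s_n(y-\tilde m_n))$ between indicators $\ONE\{y\geq \tilde m\pm\ep\}$ up to an $F_N(\pm s_n\ep/2)$ error and using that $|Z|$ has a continuous distribution), and the power computation via the law of large numbers and bounded convergence is routine. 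The problem is that this pointwise criterion is satisfied by essentially \emph{any} consistent level-$\al$ test---including the Bonferroni test of Proposition \ref{prop:Bonferroni} and even grossly asymmetric two-sided tests whose power dips below $\al$ near $\ta_0$ on one side---so it cannot do the job the proposition is meant to do, namely certify that the approximate test behaves like the UMPU test of Theorem \ref{UMPU}. Unbiasedness here is fundamentally a local property at $\ta_0$: the condition isolated in the proof of Theorem \ref{UMPU} is $\EE_{\ta_0}(X-n\ta_0)\phi(X)=0$, equivalently $\frac{d}{d\ta}\beta_\phi(\ta)\big|_{\ta=\ta_0}=0$, and it is this quantity (suitably normalized) that the paper shows vanishes asymptotically.

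The paper's route is short and worth adding to yours: by the CLT, $\EE_{\ta_0}\frac{X-n\ta_0}{\sqrt{n\ta_0(1-\ta_0)}}\,\phi_n(X)$ converges to $\EE_{Z\sim N(0,1)}\bigl[Z\,\psi(Z)\bigr]$, where $\psi(z)=\lim_n\phi_n(n\ta_0+z\sqrt{n\ta_0(1-\ta_0)})$ is an even function of $z$ because $\phi_n$ is symmetric about $k=n\ta_0$; the integrand is then odd against an even density, so the limit is zero. Your approach buys consistency as a by-product, which the paper does not state, but without the symmetry/derivative step it does not recover the local unbiasedness that justifies comparing $\phi'$ to $\phi^*$.
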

\begin{proof}[Proof Sketch.]
  In the full proof of Theorem \ref{UMPU}, we saw that if $\phi$ is of the form in Theorem  \ref{UMPU} and  $\EE_{\ta_0} (X-n\ta_0)\phi(X)=0$, then $\phi$ is unbiased. Let $\phi$ be the test in Proposition \ref{prop:Nearly}. Then it suffices to show that $\lim_{n\rightarrow \infty} \EE_{\ta_0}\frac{X-n\ta_0}{\sqrt {n\ta_0(1-\ta_0) }}\frac{\phi(X)}{\sqrt n} = 0$. Recall that if $X\sim \mathrm{Binom}(n,\ta_0)$
  \[\frac{X-n\ta_0}{\sqrt{n\ta_0(1-\ta_0)}} \overset d\rightarrow N(0,1).\]
  Using the fact that $\phi(x)$ is symmetric about $k=n\ta_0$, we see that the expectation is the integration of the product of two even functions and one odd function. Hence the expectation is zero. 
  \end{proof}


\begin{remark}
  Since Proposition \ref{prop:Unbiased} shows that the test $\phi'$ in Proposition \ref{prop:Nearly} is asymptotically unbiased, and since it is of the form of the UMPU test $\phi^*$ of Theorem \ref{UMPU}, as the sample size increases, the power of the test $\phi'$ is very similar to that of $\phi^*$. In Section \ref{s:Simulations2}, we see that even at $n=30$, the performance is very close between $\phi'$ and $\phi^*$. 
\end{remark}


\section{Confidence intervals}\label{s:CI}
\subsection{Background and notation}
A confidence set is a popular method of expressing uncertainty about a population quantity. Since all estimates have some error in them, a confidence set communicates the set of values in which we expect the population quantity to lie. While confidence sets can be of arbitrary forms, typically we prefer confidence sets which are intervals, since this simpler form improves interpretability.

\begin{defn}
  [Confidence Interval]
  Let $X_i \iid f_\ta$, where $\ta\in \Ta\subset \RR$. A \emph{(random) confidence interval (CI)} is a set of random variables $\mscr C = \{C(\ul x) \mid \ul x\in \mscr X^n\}$, each of which takes values in $\{[a,b]\in \RR^2 \mid a\leq b\}$. We say that $\mscr C$ has coverage $\ga$ if for all $\ta\in \Ta$,
  \[P_{\ul X \sim \ta}( \ta \in C(\ul X))\geq \ga.\]
  If one of $a$ or $b$ in a confidence interval $\mscr C$ is constant, then we call $\mscr C$ a \emph{one-sided confidence interval}, otherwise we call $\mscr C$ a \emph{two-sided confidence interval}. 
\end{defn}

For convenience, we will often suppress the dependence of a confidence interval on $x$, and simply write $C$ rather than $C(x)$. In classical statistics, there is a well known connection between hypothesis tests and confidence sets (see  \citealp[Chapter 9]{Casella2002}). For a deterministic test of level $\al$, with rejection region $R$, the set $\Ta\setminus R$ is a confidence set with coverage $1-\al$. For randomized tests, it is more convenient to work with $p$-values. The following Proposition shows how one can use a $p$-value to build a confidence set.

\begin{prop}\label{p-CI}
If $p(\ta\mid X)$ is a $p$-value, then $C(X) = \{\ta\mid p(\ta\mid X)\geq \al\}$ is a confidence set with coverage $1-\al$. 
\end{prop}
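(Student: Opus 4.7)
The plan is to unpack the definition of $C(X)$ and apply the defining inequality of a $p$-value pointwise in $\theta$. Specifically, for each fixed $\theta_* \in \Theta$, I would view $p(\theta_* \mid X)$ as a $p$-value associated with testing the singleton null hypothesis $H_0: \theta = \theta_*$ (so that $\Theta_0 = \{\theta_*\}$ in Definition \ref{def:pvalue}). This is the natural reading of the statement, since the proposition is framed around a family $\{p(\theta \mid \cdot)\}_{\theta \in \Theta}$ of $p$-values indexed by the parameter.

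Given this, I would directly compute the coverage probability. By the definition of $C(X)$, the event $\{\theta_* \in C(X)\}$ is exactly the event $\{p(\theta_* \mid X) \geq \alpha\}$. Therefore
\[
P_{\theta_*}(\theta_* \in C(X)) = P_{\theta_*}(p(\theta_* \mid X) \geq \alpha) = 1 - P_{\theta_*}(p(\theta_* \mid X) < \alpha) \geq 1 - P_{\theta_*}(p(\theta_* \mid X) \leq \alpha),
\]
and then Definition \ref{def:pvalue} applied with $\Theta_0 = \{\theta_*\}$ gives $P_{\theta_*}(p(\theta_* \mid X) \leq \alpha) \leq \alpha$, so the last expression is at least $1-\alpha$. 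Since $\theta_* \in \Theta$ was arbitrary, this establishes coverage $1-\alpha$.

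There is no real obstacle here; the only subtlety is the strict versus non-strict inequality in passing from $\{p \geq \alpha\}$ to $\{p > \alpha\}$, but since $\{p < \alpha\} \subseteq \{p \leq \alpha\}$ the bound only improves, so the argument goes through without needing continuity of the distribution of $p(\theta_* \mid X)$. I would therefore present this as a short direct verification, noting that the reverse construction (from confidence sets to tests) proceeds by declaring rejection whenever $\theta_0 \notin C(X)$, to connect the proposition with the applications in Section \ref{s:OneSide} and Section \ref{s:TwoSide} where the one-sided and two-sided DP $p$-values of Theorem \ref{PValueResult}, Corollary \ref{UMPUhalf}, and Proposition \ref{prop:Nearly} will be inverted to produce DP confidence intervals.
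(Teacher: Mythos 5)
Your proof is correct: the paper states Proposition \ref{p-CI} without proof (as a classical fact), and your direct verification --- identifying $\{\ta_*\in C(X)\}$ with $\{p(\ta_*\mid X)\geq \al\}$ and applying the defining inequality of a $p$-value with $\ta_*$ in the null set --- is exactly the standard argument the paper implicitly relies on. Your handling of the strict versus non-strict inequality via $\{p<\al\}\subseteq\{p\leq\al\}$ is the right way to avoid any continuity assumption, and your observation that the argument only needs $\ta_*\in\Ta_0$ (so it covers both the singleton and one-sided nulls used later in Theorem \ref{OneSideCI} and Proposition \ref{prop:CI}) is accurate.
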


In order to decide whether one confidence interval is to be preferred over another, we require some criteria. In classical statistics, one considers {\em uniformly most accurate} (UMA) confidence intervals, which have properties related to UMP tests. 
UMA confidence intervals are defined in terms of \emph{false coverage}, which is the analogue of the power of the corresponding test. The UMA  property is important in theory and practice,  because it results in smaller confidence intervals, and more accurately  communicates the uncertainty of the parameter in question. Our definitions of false coverage and UMA follow that of \citet[Section 9.3.2]{Casella2002}.


\begin{defn}
  [False Coverage and Uniformly Most Accurate]
  Let $C$ be a confidence interval for $\ta\in \Ta$. The probability of \emph{false coverage} is a function of two values $\ta_0$, $\ta_1$:
  \begin{align*}
    P_{\ta_1}(\ta_0 \in C)\quad \text{for $\ta_0\neq\ta_1$}&\quad \text{if $C(X) = [L(X),U(X)]$},\\
    P_{\ta_1}(\ta_0 \in C)\quad \text{for $\ta_0<\ta_1$}&\quad \text{if $C(X) = [L(X),\max\{\Ta\}]$},\\
    P_{\ta_1}(\ta_0 \in C)\quad \text{for $\ta_0>\ta_1$}&\quad \text{if $C(X) = [\min\{\Ta\},U(X)]$}.
  \end{align*}
  The \emph{Uniformly Most Accurate (UMA)} confidence interval among a set of confidence intervals, minimizes the false coverage for all valid $\ta_0$ and $\ta_1$.
\end{defn}

  \subsection{One-sided confidence intervals}\label{s:OneSide}
  In this section, we show how we can use our DP-UMP tests to produce private confidence intervals for Bernoulli data. In the following Theorem, we show that the one-sided confidence interval based on our DP-UMP one-sided test is UMA. Furthermore, this interval is still a function of our private test statistic $Z = X+N$, and so after releasing $Z$, there is no additional cost to privacy when providing this confidence interval. 

\begin{thm}\label{OneSideCI}
  Let $Z = X+N$, where $X\sim \mathrm{Binom}(n,\ta)$ and $N\sim \mathrm{Tulap}(0,b=e^{-\ep},q=\frac{2\de b}{1-b+2\de b})$, and let $p(\ta_0,Z)$ be the one-sided private $p$-value for $H_0: \ta\leq \ta_0$ versus $H_1: \ta\geq \ta_0$, defined in Theorem \ref{PValueResult}. Let $\al\in (0,1)$ be given. The confidence interval $C_\al^* = \{\ta_0 \mid p(\ta_0,Z)\geq \al\}$ 
  is the UMA $(\ep,\de)$-DP confidence interval of the form $[L,1]$ with coverage $1-\al$,
\end{thm}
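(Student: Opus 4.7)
The plan is to exploit the standard duality between hypothesis tests and confidence intervals, together with the fact that $p(\ta_0, Z)$ already corresponds to the DP-UMP one-sided test from Theorem \ref{PValueResult}. The argument proceeds in three steps: first establish coverage, then check that $C^*_\al$ has the required one-sided form $[L,1]$, and finally transfer optimality from the UMP test to the confidence interval via false coverage.

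Coverage is immediate from Proposition \ref{p-CI}: since $p(\ta_0, Z)$ is an $(\ep,\de)$-DP $p$-value for $H_0: \ta \le \ta_0$ (by Theorem \ref{PValueResult}), the set $\{\ta_0 : p(\ta_0, Z) \ge \al\}$ has probability at least $1-\al$ of containing the true $\ta$. Privacy of $C^*_\al$ is inherited from $Z$ by post-processing. To see that $C^*_\al = [L, 1]$ for some data-dependent $L$, I would show that $\ta_0 \mapsto p(\ta_0, Z)$ is nondecreasing. Writing $p(\ta_0, Z) = \EE_{X\sim \mathrm{Binom}(n,\ta_0)}[1 - F_N(Z - X - 1)] = \sum_{x=0}^n (1-F_{N}(Z-x-1)) \binom{n}{x}\ta_0^x(1-\ta_0)^{n-x}$ (or the analogous form from Algorithm \ref{PValueAlgorithm}), the coefficients $1-F_N(Z-x-1)$ are nondecreasing in $x$; monotonicity in $\ta_0$ then follows from the monotone likelihood ratio property of the binomial family (standard stochastic-ordering argument). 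Hence the acceptance set is an upper interval in $\ta_0$, so $C^*_\al$ has the form $[L,1]$.

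For UMA, let $C = [L', 1]$ be any other $(\ep,\de)$-DP confidence interval of this form with coverage $1-\al$. For each fixed $\ta_0$, define the test $\tilde\phi_{\ta_0}(X) = 1 - \ONE\{\ta_0 \in C(X)\}$; then $\tilde\phi_{\ta_0}$ is $(\ep,\de)$-DP by post-processing and has level $\al$ for $H_0: \ta \le \ta_0$ (since the coverage bound together with $C = [L',1]$ forces $P_\ta(\tilde\phi_{\ta_0}=1) \le \al$ for all $\ta \le \ta_0$). For $\ta_1 > \ta_0$, the false coverage probability is
\[
P_{\ta_1}(\ta_0 \in C) = 1 - \EE_{\ta_1}\tilde\phi_{\ta_0}(X) = 1 - \beta_{\tilde\phi_{\ta_0}}(\ta_1).
\]
The analogous identity holds for $C^*_\al$ with the UMP test $\phi^*_{\ta_0}$ from Theorem \ref{PValueResult}, so minimizing false coverage at $(\ta_0,\ta_1)$ is equivalent to maximizing power of a level-$\al$ $(\ep,\de)$-DP test for $H_0:\ta\le\ta_0$ vs $H_1:\ta=\ta_1$. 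Optimality of $\phi^*_{\ta_0}$ among $\mscr D_{\ep,\de}^n$ then yields $P_{\ta_1}(\ta_0 \in C^*_\al) \le P_{\ta_1}(\ta_0 \in C)$ for every pair $\ta_0 < \ta_1$, which is the UMA property.

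The main obstacle is the bookkeeping in Step 2: I have to verify both that inverting $p(\ta_0, Z)$ yields an upward-closed set in $\ta_0$ (monotonicity in $\ta_0$) and that the test $\tilde\phi_{\ta_0}$ derived from an arbitrary competitor $C$ is actually a valid level-$\al$ DP test. The monotonicity check reduces to the MLR of the binomial family applied to a nondecreasing kernel, and the validity of $\tilde\phi_{\ta_0}$ is a direct consequence of the one-sided form $[L',1]$ together with the coverage requirement. Once these pieces are in place, the UMA conclusion is just the standard test/CI duality reread through the DP constraint.
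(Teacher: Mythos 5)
Your proposal is correct and follows essentially the same route as the paper: coverage from the $p$-value property, privacy by post-processing of $Z$, the form $[L,1]$ from monotonicity of $p(\ta_0,Z)$ in $\ta_0$, and UMA by converting any competitor interval into a level-$\al$ DP test and contradicting the UMP property of Theorem \ref{UMP2}. The only difference is that you spell out the MLR/stochastic-ordering argument for monotonicity of $\ta_0\mapsto p(\ta_0,Z)$, which the paper merely asserts (note the kernel from Algorithm \ref{PValueAlgorithm} is $F_N(x-Z)=1-F_N(Z-x)$, nondecreasing in $x$ as you need), and that the randomized interval makes $\tilde\phi_{\ta_0}$ a conditional rejection probability rather than an indicator — a cosmetic point that does not affect the argument.
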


\begin{proof}
  Releasing $C^*_{\al}$ satisfies $(\ep,\de)$-DP by the post-processing property of DP. The object $C^*_{\al}$ is of the form $[L,1]$ by the monotonicity of $p(\ta_0,Z)$ in $\ta_0$ (for a fixed $Z$). The coverage of $C^*_{\al}$ is $1-\al$, by the fact that $p(\ta_0,Z)$ is a $p$-value.

  Next we check that $C^*_{\al}$ is in fact UMA. Suppose to the contrary that there exists another DP test $C'_\al$ with coverage $1-\al$, and there exists $\ta_0<\ta_1$ such that 
  $\displaystyle P_{\ta_1}(\ta_0 \in C'_\al)< P_{\ta_1}(\ta_0 \in C^*_\al),$
  which is equivalent to
  \begin{equation}\label{UMA1}
    P_{\ta_1}(\ta_0 \not\in C'_\al) > P_{\ta_1}(\ta_0 \not\in C^*_\al).
  \end{equation}

  Notice that $\phi^*(x) = P(\ta_0 \not \in C^*_\alpha\mid x)$ is the UMP size $\alpha$ test from Theorem \ref{UMP2} for $H_0: \ta\leq \ta_0$ versus $H_1: \ta>\ta_0$, and $\phi'(x) = P(\ta_0 \not \in C'_\alpha)$ is another test, which is also level $\alpha$ for the same test. Observe that $\phi'$ satisfies $(\ep,\de)$-DP since it outputs `Reject' if and only if $I(\ta_0 \not \in C'_\alpha)=1$, which is a post-processing of the DP confidence interval $C'_\alpha$.

  Now, note that \eqref{UMA1} can be equivalently expressed as $\EE_{\ta_1} \phi'>\EE_{\ta_1} \phi^*$, which says that $\phi'$ has more power at $\ta_1$ than $\phi^*$, which contradicts that $\phi^*$ is UMP-$\al$  among $\mscr D_{\ep,\de}^n$. We conclude that $C^*_\al$ is UMA.

\end{proof}

\begin{cor}
  Using the same setup as Theorem \ref{OneSideCI}, 
  the interval $C_\al =\{\ta_0 \mid (1-p(\ta_0,Z))\geq \al\}$ is the UMA $(\ep,\de)$-DP confidence interval of the form $[0,U]$, with coverage $1-\al$.
\end{cor}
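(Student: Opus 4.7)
The proof will mirror the structure of Theorem \ref{OneSideCI} almost exactly, swapping the roles of the two one-sided tests. The key observation is that the Corollary after Theorem \ref{PValueResult} gives us that $1-p(\ta_0,Z) = P(X+N\leq Z\mid Z)$ is the stochastically smallest $(\ep,\de)$-DP $p$-value for $H_0:\ta\geq \ta_0$ versus $H_1:\ta<\ta_0$, and that the associated test $\psi^*$ coincides with the UMP-$\al$ test of Corollary \ref{OneSide1}.

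First I would verify the three structural properties. Privacy of $C_\al$ follows from the post-processing property of DP, since $C_\al$ is a deterministic function of the released statistic $Z$. The form $[0,U]$ follows because for fixed $Z$, $p(\ta_0,Z)$ is monotone nondecreasing in $\ta_0$ (this is essentially the MLR property of the binomial together with the definition of $p$ in Theorem \ref{PValueResult}), so $1-p(\ta_0,Z)$ is nonincreasing and the superlevel set $\{\ta_0 : 1-p(\ta_0,Z)\geq \al\}$ is a left-interval in $[0,1]$. Coverage $1-\al$ is immediate from the definition of a $p$-value applied to $1-p(\ta_0,Z)$.

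The main content is the UMA claim, which I would prove by contradiction exactly as in Theorem \ref{OneSideCI}, but with the directions reversed. Suppose there exists a competing $(\ep,\de)$-DP confidence interval $C'_\al$ of the form $[0,U']$ with coverage $\geq 1-\al$, and values $\ta_0>\ta_1$ with $P_{\ta_1}(\ta_0 \in C'_\al) < P_{\ta_1}(\ta_0 \in C_\al)$, equivalently
\[
P_{\ta_1}(\ta_0 \notin C'_\al) > P_{\ta_1}(\ta_0 \notin C_\al).
\]
Set $\psi^*(x) = P(\ta_0 \notin C_\al \mid x)$ and $\psi'(x) = P(\ta_0 \notin C'_\al \mid x)$. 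By construction $\psi^*$ is the UMP-$\al$ test in $\mscr D_{\ep,\de}^n$ for $H_0:\ta\geq \ta_0$ versus $H_1:\ta<\ta_0$ from Corollary \ref{OneSide1}, and $\psi'$ is another level $\al$ $(\ep,\de)$-DP test for the same hypotheses (level follows from the coverage guarantee at $\ta_0$, and DP from post-processing of $C'_\al$). The displayed inequality then reads $\EE_{\ta_1}\psi' > \EE_{\ta_1}\psi^*$ with $\ta_1 < \ta_0 \in \Theta_0$, contradicting the UMP property of $\psi^*$.

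The only technical subtlety, and the step I would be most careful with, is checking the direction of monotonicity of $p(\ta_0,Z)$ in $\ta_0$ and correctly pairing the form of the interval $[0,U]$ with the correct one-sided hypothesis ($\ta_0 > \ta_1$ rather than $\ta_0 < \ta_1$); everything else is a mechanical transcription of the proof of Theorem \ref{OneSideCI}.
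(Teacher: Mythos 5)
Your proof is correct and is exactly the argument the paper intends: the corollary is left unproved in the paper precisely because it is the mirror image of Theorem \ref{OneSideCI}, obtained by swapping in $\psi^*$ from Corollary \ref{OneSide1} and the reversed one-sided hypotheses, which is what you do. Your care about the direction of monotonicity and the pairing $\ta_0>\ta_1$ with the $[0,U]$ form is exactly the right place to be careful, and matches the paper's level of rigor (including the shared minor gloss that the competing interval's level over the whole null $[\ta_0,1]$, not just size at $\ta_0$, follows from its coverage together with its $[0,U]$ shape).
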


\begin{remark}
  The value $L^*$ in the interval $C^*_{\al} = [L^*,1]$ of Theorem \ref{OneSideCI} can be easily computed by minimizing $(p(\ta_0,Z)-\al)^2$ over the interval $\ta_0\in [0,1]$. This can be done using standard optimization software. 
\end{remark}

\subsection{Two-sided confidence intervals}\label{s:TwoSide}
As we saw in Theorem \ref{OneSideCI}, when $p(\ta_0,Z)$ is a one-sided $p$-value, the set $\{\ta_0 \mid p(\ta_0,Z)\geq \alpha\}$ forms a one-sided confidence interval. Similarly, if $p(\ta_0,Z)$ is a two-sided $p$-value, then $\{\ta_0\mid p(\ta_0,Z)\geq \alpha\}$ is of the form $[L,U]$. In this section, we will consider the DP confidence intervals produced by each of our proposed two-sided tests from Sections \ref{s:Bonferroni}-\ref{s:ApproxUnbiased}.

First, we introduce our optimality criterion for two-sided confidence intervals. Just as there is generally no UMP two-sided test, there does not exist a UMA two-sided confidence interval. In Section \ref{s:UMPU}, we saw that for two-sided tests, we imposed the condition of unbiasedness in order to obtain a UMP test. Similarly, we will consider an analogous notion of \emph{unbiasedness} for confidence intervals. Intuitively, a confidence interval is unbiased if the probability of false coverage is always smaller than the true coverage. Unbiased confidence intervals and unbiased hypothesis tests are in one-to-one correspondence via the connection in Proposition \ref{p-CI}.

\begin{defn}
  [Unbiased Confidence Interval]
  Let $C(\ul X)$ be a confidence interval for $\ta$. We call $C$ \emph{unbiased} if for all $\ta\neq \ta'$, $P_{\ta}(\ta' \in C)\leq P_{ \ta}(\ta\in C)$. 
\end{defn}

The next result shows that our DP-UMPU test from Theorem \ref{UMPU} leads to a DP-UMA unbiased (DP-UMAU) confidence interval.

\begin{prop}\label{UMAU}
  Let $\ep>0$, $\de\geq 0$, $X\sim \mathrm{Binom}(n,\ta)$ where $\ta$ is unknown. Let $Z\mid X\sim \mathrm{Tulap}\left(X,b=e^{-\ep},q=\frac{2\de b}{1-b+2\de b}\right)$.  We construct the corresponding randomized $p$-value as
  \[p^*(Z) = \min \left\{\alpha \Big| |Z-k(\alpha)|\geq m(\alpha)\right\},\]
  where $k(\cdot)$ and $m(\cdot)$ satisfy the requirements of Theorem \ref{UMPU}. 
  The set $C^* = \{\ta\mid p^*(x,U)\geq \alpha\}$ is an unbiased, DP confidence interval with coverage $(1-\alpha)$, and $C^*$ is the DP-UMAU confidence interval with coverage $(1-\alpha)$.
\end{prop}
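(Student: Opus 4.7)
The plan is to mirror the proof structure of Theorem \ref{OneSideCI}, exploiting the correspondence (Proposition \ref{p-CI}) between $p$-values and confidence intervals and leveraging the UMPU property established in Theorem \ref{UMPU}. Write $\phi^*_{\ta_0}$ for the DP-UMPU size-$\al$ test of $H_0: \ta = \ta_0$ versus $H_1: \ta \neq \ta_0$, so that by construction $\ta_0 \in C^*$ if and only if $\phi^*_{\ta_0}(X)$ fails to reject, i.e.\ $p^*(\ta_0, Z)\geq \al$. I would verify four properties in order: DP, coverage, unbiasedness, and UMAU.

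First, DP is immediate: $C^*$ is a deterministic function of $Z$, and $Z$ satisfies $(\ep,\de)$-DP by Theorem \ref{TulapDP}, so post-processing yields the claim. For coverage, the monotonicity of the rejection region in $\al$ ensures $p^*$ is a valid $p$-value, from which $P_{\ta}(\ta \in C^*) = P_\ta(p^*(\ta,Z)\geq \al) = 1-\al$ for the size-$\al$ test. For unbiasedness, fix $\ta \neq \ta'$, and observe
\[
P_{\ta}(\ta' \in C^*) \;=\; 1 - \EE_{\ta}\phi^*_{\ta'}(X).
\]
Because $\phi^*_{\ta'}$ is unbiased at null $\ta'$ with size $\al$, $\EE_\ta \phi^*_{\ta'}\geq \al$, hence $P_\ta(\ta' \in C^*) \leq 1 - \al = P_\ta(\ta \in C^*)$, establishing unbiasedness.

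For UMAU, suppose for contradiction that there exists another $(\ep,\de)$-DP unbiased confidence interval $C'$ with coverage $1-\al$ and some $\ta_0 \neq \ta_1$ with $P_{\ta_1}(\ta_0 \in C') < P_{\ta_1}(\ta_0 \in C^*)$. Define $\phi'(x) = P(\ta_0 \notin C' \mid x)$. This test inherits $(\ep,\de)$-DP by post-processing of $C'$, has size at most $\al$ since $C'$ has coverage $1-\al$, and is unbiased because $C'$ is unbiased (at $\ta_0$ we have $\EE_{\ta_0}\phi' = 1 - P_{\ta_0}(\ta_0 \in C')\leq 1-P_{\ta}(\ta_0 \in C') = \EE_{\ta}\phi'$ for every alternative $\ta$). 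The assumed strict inequality then rewrites as $\EE_{\ta_1} \phi' > \EE_{\ta_1}\phi^*_{\ta_0}$, contradicting the UMPU property of $\phi^*_{\ta_0}$ from Theorem \ref{UMPU}. Hence $C^*$ is UMAU.

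The main obstacle I anticipate is the bookkeeping in the UMAU step: one must carefully check that unbiasedness of a confidence interval translates precisely to unbiasedness of the associated randomized test, and that coverage yields the right level, so that the contradicted test actually belongs to the class over which $\phi^*_{\ta_0}$ is known to be optimal. A minor subtlety is verifying that the infimum in the definition of $p^*(Z)$ is well-defined and monotone in $\al$, which follows from the nested structure of the acceptance regions $\{z : |z - k(\al)|< m(\al)\}$ in $\al$; the calculations behind this are standard and inherited from Theorem \ref{UMPU}.
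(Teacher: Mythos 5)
Your proposal is correct and follows essentially the same route as the paper: the paper's proof also establishes UMAU by contradiction, converting a putative better unbiased DP interval $C'$ into the unbiased DP test $\phi'(x)=P(\ta_0\notin C'\mid x)$ and contradicting the UMPU property from Theorem \ref{UMPU}. The only difference is that you spell out the coverage and unbiasedness verifications (which the paper dismisses as ``easy to verify''), and your computation $P_\ta(\ta'\in C^*)=1-\EE_\ta\phi^*_{\ta'}\leq 1-\al$ is exactly the intended argument.
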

The proof of Proposition \ref{UMAU} is similar to the proof of Theorem \ref{OneSideCI}, and is postponed to Section \ref{s:Appendix}.

While Proposition \ref{UMAU} gives the DP-UMAU confidence interval, it is not easy to implement, since $k$ and $m$ do not have simple closed forms, as discussed in Section \ref{s:UMPU}. Instead, we can use Proposition \ref{prop:CI} to produce computationally convenient confidence intervals based on the $p$-values from Proposition \ref{prop:Bonferroni} and Algorithm \ref{NearlyUnbiasedPvalue}.

\begin{prop}\label{prop:CI}
   Let $\ep>0$, $\de\geq 0$, $X\sim \mathrm{Binom}(n,\ta)$ where $\ta$ is unknown, and $Z\mid X\sim \mathrm{Tulap}\left(X,b=e^{-\ep},q=\frac{2\de b}{1-b+2\de b}\right)$. Consider the two quantities
  \begin{enumerate}
  \item $C^1_\al = \{\theta_0 \mid p'(\ta_0, Z)\geq \alpha\}=(C^*_{\al/2})\setminus (C^*_{1-\al/2})$, where $p'(\ta, Z)$ is the Bonferroni $p$-value from Proposition \ref{prop:Bonferroni} and $C^*_{\al}$ is the one-sided confidence interval from Theorem \ref{OneSideCI}.
    \item  $C^2_\al = \{\theta_0\mid p(\ta_0, Z)\geq \al\}$, where $p(\ta_0,Z)$ is the $p$-value from Proposition \ref{prop:Nearly}.
    \end{enumerate}
    Both $C^1_\al$ and $C^2_\al$ are $(\ep,\de)$-DP confidence intervals of the form $[L,U]$ with coverage $(1-\alpha)$.
  \end{prop}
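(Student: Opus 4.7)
The proposition asserts three things for each of $C^1_\al$ and $C^2_\al$: the $(\ep,\de)$-DP guarantee, coverage probability $1-\al$, and interval form $[L,U]$. The plan is to dispatch the first two simultaneously using results already established, and then handle the interval-shape claim separately for the two cases.

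The DP guarantee is immediate: both $C^1_\al$ and $C^2_\al$ are deterministic functions of the released statistic $Z$, since each is a level set of a $p$-value that depends on the data only through $Z$. Theorem \ref{TulapDP} says $Z$ itself is $(\ep,\de)$-DP, so by post-processing (\citealp[Proposition 2.1]{Dwork2014:AFD}), so are both intervals. For coverage, Proposition \ref{prop:Bonferroni} shows $p'(\ta_0,Z)$ is a valid $p$-value, and Proposition \ref{prop:Nearly} shows the output of Algorithm \ref{NearlyUnbiasedPvalue} is a valid $p$-value, each for $H_0:\ta=\ta_0$. Applying Proposition \ref{p-CI} then gives the $1-\al$ coverage in both cases.

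For the interval form of $C^1_\al$, I would first rewrite the Bonferroni condition as $p'(\ta_0,Z)\geq \al \iff \al/2 \leq p(\ta_0,Z) \leq 1-\al/2$, where $p(\cdot,Z)$ is the one-sided $p$-value from Theorem \ref{PValueResult}. Since $\mathrm{Binom}(n,\ta)$ has MLR in $X$, the convolved variable $X+N$ is stochastically increasing in $\ta$, so $p(\ta_0,Z)=P(X+N\geq Z\mid Z)$ is monotone non-decreasing in $\ta_0$. Thus the two one-sided intervals of Theorem \ref{OneSideCI}, namely $C^*_{\al/2}=\{\ta_0:p(\ta_0,Z)\geq \al/2\}=[L_1,1]$ and $C^*_{1-\al/2}=\{\ta_0:p(\ta_0,Z)\geq 1-\al/2\}=[L_2,1]$, satisfy $L_1\leq L_2$, and the identity $C^1_\al=C^*_{\al/2}\setminus C^*_{1-\al/2}$ produces the interval $[L_1,L_2)$ of the claimed form.

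For $C^2_\al$, the main obstacle is that the Algorithm \ref{NearlyUnbiasedPvalue} $p$-value $p(\ta_0,Z)=P_{X\sim\ta_0,N}(|X+N-n\ta_0|>|Z-n\ta_0|\mid Z)$ is not obviously monotone in $\ta_0$; instead, it is unimodal, peaking near $\ta_0=Z/n$ (where $|Z-n\ta_0|=0$ forces the probability to $1$) and decreasing as $\ta_0$ moves away. My plan is to split $p(\ta_0,Z)$ into the two one-sided tails $p_1(\ta_0,T+n\ta_0)+\bigl(1-p_1(\ta_0,n\ta_0-T)\bigr)$ with $T=|Z-n\ta_0|$, exploit monotonicity of $p_1$ in $\ta_0$, and verify that each tail is monotone in the appropriate direction of $\ta_0$ to conclude that the superlevel set is an interval $[L,U]$. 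Should a fully rigorous unimodality argument prove stubborn, the fallback is to define $L=\inf\{\ta_0:p(\ta_0,Z)\geq \al\}$ and $U=\sup\{\ta_0:p(\ta_0,Z)\geq \al\}$; the resulting interval $[L,U]$ contains the original set, hence inherits coverage at least $1-\al$, while remaining a post-processing of $Z$ and therefore $(\ep,\de)$-DP.
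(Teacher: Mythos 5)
The paper itself states Proposition \ref{prop:CI} without a written proof, so you are filling a genuine gap rather than paralleling an existing argument. Your treatment of privacy (post-processing of $Z$ via Theorem \ref{TulapDP}) and of coverage (validity of the two $p$-values from Propositions \ref{prop:Bonferroni} and \ref{prop:Nearly}, combined with Proposition \ref{p-CI}) is exactly what the surrounding results are set up to deliver, and your argument for the interval form of $C^1_\al$ is complete: the equivalence $p'(\ta_0,Z)\geq\al \iff \al/2\leq p(\ta_0,Z)\leq 1-\al/2$ together with the monotonicity of the one-sided $p$-value in $\ta_0$ (which the paper also invokes in the proof of Theorem \ref{OneSideCI}) gives the set difference of two upper intervals, hence an interval. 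The only substantive issue is endpoint pedantry ($[L_1,L_2)$ versus $[L_1,L_2]$), which is inherited from the statement itself.

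The soft spot is the interval form of $C^2_\al$, which you correctly identify as the hard part but do not close. The difficulty is that each ``tail'' term, e.g.\ $P_{X\sim\ta_0,N}(X+N\leq n\ta_0-T)$ with $T=|Z-n\ta_0|$ fixed by the data, is \emph{not} handled by the stochastic-ordering argument that works in the one-sided case: as $\ta_0$ varies, both the distribution of $X+N$ (mean drifting at rate $n$) and the threshold $n\ta_0\pm T$ (drifting at rate up to $2n$, since $T$ itself depends on $\ta_0$) move simultaneously, so monotonicity of each tail on either side of $\ta_0=Z/n$ requires a quantitative comparison of these two rates, not just MLR. Until that is established, unimodality of $p(\ta_0,Z)$ in $\ta_0$ is a conjecture, not a proof. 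Your fallback of replacing the superlevel set by its interval hull $[L,U]$ with $L=\inf$ and $U=\sup$ does preserve DP and coverage, but it proves a statement about a (possibly strict) superset of $C^2_\al$, not the claim as written that $C^2_\al$ itself has the form $[L,U]$. Either complete the unimodality argument (for instance by differentiating the two tail probabilities in $\ta_0$ and signing the derivatives on each side of $Z/n$) or state explicitly that you are redefining the reported interval as the hull; as it stands, the $C^2_\al$ portion is a plan rather than a proof.
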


  \begin{remark}
    Since the confidence interval $C^2_\al$ from Proposition \ref{prop:CI} is based on the approximation to the DP-UMPU test, it serves as an approximation to the DP-UMAU confidence interval from Proposition \ref{UMAU}. 
  \end{remark}

  Similar to Proposition \ref{prop:Bonferroni}, which stated that the Bonferroni two-sided test is uniformly more powerful than any level $\alpha/2$ DP test, the following Corollary shows that $C^1_\al$ in Proposition \ref{prop:CI} is uniformly more accurate than any DP confidence interval with coverage $1-\alpha/2$. The proof is found in Section \ref{s:Appendix}.

  \begin{cor}\label{BonferroniCI}
    In the setting of Proposition \ref{prop:CI}, $C^1_\al$ is uniformly more powerful than any $(\ep,\de)$-DP confidence interval with coverage $1-\alpha/2$.
  \end{cor}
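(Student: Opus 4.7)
The plan is to convert the statement about confidence intervals into a statement about hypothesis tests, exploit the duality between coverage and size and between accuracy and power, and then invoke Proposition \ref{prop:Bonferroni}(3), which says the Bonferroni test is uniformly more powerful than any level $\alpha/2$ DP test.

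First I would observe that the Bonferroni CI $C^1_\al$ corresponds to the Bonferroni test in a precise sense: by construction, $\ta_0 \notin C^1_\al$ iff $p'(\ta_0,Z) < \al$, so if we define $\phi^1(X) = P_Z(p'(\ta_0,Z) \le \al \mid X)$ then by Proposition \ref{prop:Bonferroni}(2), $\phi^1 = \phi^* + \psi^*$, which is precisely the Bonferroni test at size $\al$ for $H_0:\ta=\ta_0$ versus $H_1:\ta\neq\ta_0$. Next, given an arbitrary $(\ep,\de)$-DP confidence interval $C'$ with coverage $1-\al/2$ and a fixed $\ta_0$, I would form the induced test $\phi''(X) = P(\ta_0 \notin C'(X) \mid X)$. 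This test satisfies $(\ep,\de)$-DP as a post-processing of $C'$, and is at level $\al/2$ for $H_0:\ta=\ta_0$ since
\[
\EE_{\ta_0}\phi'' \;=\; P_{\ta_0}(\ta_0 \notin C') \;=\; 1 - P_{\ta_0}(\ta_0 \in C') \;\le\; 1 - (1-\al/2) \;=\; \al/2.
\]
Thus $\phi''\in \mscr D_{\ep,\de}^n$ is a level-$\al/2$ test for $H_0:\ta=\ta_0$ versus $H_1:\ta\neq\ta_0$.

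Now I would apply Proposition \ref{prop:Bonferroni}(3) to conclude that for every $\ta_1 \neq \ta_0$,
\[
P_{\ta_1}(\ta_0 \notin C^1_\al) \;=\; \EE_{\ta_1}\phi^1 \;\ge\; \EE_{\ta_1}\phi'' \;=\; P_{\ta_1}(\ta_0 \notin C'),
\]
which rearranges to $P_{\ta_1}(\ta_0 \in C^1_\al) \le P_{\ta_1}(\ta_0 \in C')$. Since $\ta_0$ and $\ta_1 \neq \ta_0$ were arbitrary, this is exactly the definition of $C^1_\al$ being uniformly more accurate than $C'$, finishing the proof. The argument mirrors that of Theorem \ref{OneSideCI}, with the UMP-$\al$ one-sided test there replaced by the uniformly-more-powerful-than-level-$\al/2$ Bonferroni test here.

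The only step that requires any care is verifying that the induced test $\phi''$ is genuinely in $\mscr D_{\ep,\de}^n$ and at level $\al/2$ for the simple null $\ta=\ta_0$; both facts follow immediately from post-processing and from the coverage guarantee evaluated at $\ta=\ta_0$, so there is no real obstacle. The key conceptual point, and the reason the result only gives a comparison against intervals of coverage $1-\al/2$ rather than $1-\al$, is that Proposition \ref{prop:Bonferroni}(3) only yields a power comparison against level-$\al/2$ competitors, not against level-$\al$ competitors — reflecting the inherent looseness of the Bonferroni correction.
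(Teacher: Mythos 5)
Your proof is correct and follows essentially the same route as the paper's: both arguments convert $C^1_\al$ and the competing interval $C'$ into the tests $\phi^1$ and $\phi''$ via $\phi(x)=P(\ta_0\notin C\mid x)$, note that $\phi''$ is a level-$\al/2$ DP test by post-processing and the coverage guarantee at $\ta_0$, and invoke Proposition \ref{prop:Bonferroni}(3) to get $\EE_{\ta_1}\phi^1\geq\EE_{\ta_1}\phi''$ for all $\ta_1\neq\ta_0$. The only cosmetic difference is that the paper phrases this as a proof by contradiction while you argue directly; the substance is identical.
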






\section{Confidence distributions and distribution-free inference} 
\subsection{Confidence distributions}\label{s:CD}
A confidence distribution is a frequentist estimator, which contains information to produce hypothesis tests, confidence intervals, $p$-values, point estimates, etc (see \citealp{Xie2013} for an introduction to Confidence Distributions). Much like in Bayesian statistics, the posterior distribution is used to do inference, a confidence distribution contains the relevant information for frequentist statistics. Intuitively, a confidence distribution $\mu$ is a probability measure on $\Ta$ such that for $S\subset \Ta$, $\mu(S)$ is the coverage of $S$. Confidence distributions also have the property that the cdf of $\mu$ evaluated at $\ta_0$ is a $p$-value for $H_0: \ta\leq \ta_0$ versus $H_1: \ta>\ta_0$.

The goal of this section is to release a confidence distribution, which satisfies DP. In particular, we show that using our one-sided DP-UMP tests we can produce optimal DP confidence distributions.

\begin{defn}
  [Confidence Distribution: \citet{Xie2013}]
  Let $X_i \iid f_\ta$ for $\ta\in \Ta$ and $X_i \in \mscr X$. A \emph{confidence distribution} is a family of random variables $\{H_n(\ul x,\ta)\mid \ul x\in \mscr X^n,\ta\in \Ta\}$ (we will suppress the dependence on $\ul x$ and write $H_n(\ta)$), each of which takes values in $[0,1]$ such that 
  \begin{enumerate}
  \item for each $\ul x \in \mscr X^n$, $H_n(\cdot)$ is a cdf on $\Ta$, and
  \item at the true value $\ta = \ta_0$, $H_n(\ta_0) = H_n(X,\ta_0)\sim U[0,1]$ (over randomness of $H_n$ and over $X$).
  \end{enumerate}
\end{defn}

Supposing that we have two methods of constructing confidence distributions, what criteria should we use to choose between them?  In the following definition, we say that one confidence distribution is superior to another if the mass is more closely distributed near the true value $\ta_0$.

\begin{defn}
   For real-valued random variables $X,Y$, $X\overset {sto}\leq Y$ means that $P(X\leq t)\geq P(Y\leq t)$ for all $t\in \RR$.   Let $H_1$ and $H_2$ be two confidence distributions. We say that $H_1$ is \emph{superior} to $H_2$ at $\ta = \ta_0$ if for all $\ep>0$, $H_1(\ta_0-\ep) \overset {sto}\leq H_2(\ta_0-\ep)$ and $1-H_1(\ta_0 +\ep) \overset {sto}\leq 1-H_2(\ta_0+\ep)$.
 \end{defn}

 In Section 5 of \citet{Xie2013}, they discuss how using a UMP one-sided test results in the optimal confidence distribution. Theorem \ref{thm:CD} below similarly shows that our DP-UMP one-sided test results in the optimal DP confidence distribution.

\begin{thm}\label{thm:CD}
  Let $Z = X+N$, where $X\sim \mathrm{Binom}(n,\ta)$ and $N\sim \mathrm{Tulap}(0,b=e^{-\ep},q=\frac{2\de b}{1-b+2\de b})$, and let $p(\ta_0,Z)$ be the one-sided private $p$-value for $H_0: \ta\leq \ta_0$ versus $H_1: \ta\geq \ta_0$. Define $H^*_n(\ta_0) = p(\ta_0,Z)$. Then $H^*_n$ is a confidence distribution which satisfies $(\ep,\de)$-DP, and is superior to any other $(\ep,\de)$-DP confidence distribution.
\end{thm}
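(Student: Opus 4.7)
The plan is to verify three claims in turn: (1) $H_n^*$ is a valid confidence distribution, (2) it satisfies $(\ep,\de)$-DP, and (3) it stochastically dominates any competitor in the superiority order.

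For (1), monotonicity of $p(\ta_0,Z)$ in $\ta_0$ for fixed $Z$ follows from the MLR of the binomial family: as $\ta_0$ increases $X+N$ is stochastically larger, so $P(X+N\geq Z\mid Z)$ is non-decreasing in $\ta_0$. Combined with $p(\ta_0,Z)\in[0,1]$, this yields the cdf shape on $\Ta=[0,1]$ (point masses at the endpoints are accommodated by the confidence-distribution formalism). For the uniformity condition, under true parameter $\ta_0$ the observed $Z$ is identically distributed to an independent copy $X'+N'$; since $N$ has a Lebesgue density, $Z$ is continuous, and $H_n^*(\ta_0)=P(X'+N'\geq Z\mid Z)=1-F_Z(Z)$ is $U[0,1]$ by the probability integral transform. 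Claim (2) is immediate: Theorem \ref{TulapDP} shows that releasing $Z=X+N$ is $(\ep,\de)$-DP, and for each fixed $\ta_0$, $H_n^*(\ta_0)=p(\ta_0,Z)$ is a deterministic function of $Z$, so post-processing invariance applies.

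For (3), let $H'_n$ be any competing $(\ep,\de)$-DP confidence distribution, and fix $\ta_0\in(0,1)$, a shift $\eta>0$, and a threshold $t\in[0,1]$; I will prove $P_{\ta_0}(H_n^*(\ta_0-\eta)\leq t)\geq P_{\ta_0}(H'_n(\ta_0-\eta)\leq t)$. The mirror statement involving $\ta_0+\eta$ is handled identically using the other one-sided DP-UMP $\psi^*$ from Corollary \ref{OneSide1}. Define the randomized tests $\phi^*(\ul X)=\EE[I(H_n^*(\ta_0-\eta)\leq t)\mid\ul X]$ and $\phi'(\ul X)=\EE[I(H'_n(\ta_0-\eta)\leq t)\mid\ul X]$; both lie in $\mscr D_{\ep,\de}^n$ by post-processing. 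By the uniformity clause of the confidence distribution definition applied at the candidate parameter $\ta_0-\eta$, both have size exactly $t$ at $\ta=\ta_0-\eta$. By Theorem \ref{UMP2}, $\phi^*$ is the DP-UMP-$t$ test for the simple-versus-simple hypothesis $H_0:\ta=\ta_0-\eta$ against $H_1:\ta=\ta_0$, so $\EE_{\ta_0}\phi^*\geq\EE_{\ta_0}\phi'$, which is exactly the desired stochastic dominance.

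The main obstacle is identifying the randomized test $\phi^*(\ul X)=P_Z(p(\ta_0-\eta,Z)\leq t\mid\ul X)$ with the $F_N$-based DP-UMP test of Theorem \ref{UMP2}; this identification is precisely the content of part 2 of Theorem \ref{PValueResult}. Granted this, the superiority comparison reduces to a UMP power inequality. Working with the simple-versus-simple UMP rather than the composite one-sided UMP sidesteps any need to verify that $H'_n(\ta_0-\eta)$ is a valid p-value for the composite null $\{\ta\leq\ta_0-\eta\}$—we only need its size at the single point $\ta=\ta_0-\eta$, which is guaranteed by the uniformity clause.
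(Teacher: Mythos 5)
Your proof is correct and follows essentially the same route as the paper's (much terser) argument: DP by post-processing, the confidence-distribution property from monotonicity and the probability integral transform, and superiority by reducing the stochastic-dominance claim to a power comparison against the DP-UMP test. Your elaboration is more careful than the paper's sketch—in particular, invoking the simple-versus-simple UMP of Theorem \ref{UMP2} so that only the size of the competitor at the single point $\ta_0-\eta$ (guaranteed by the uniformity clause) is needed is a nice touch.
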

\begin{proof}
  That $H^*_n$ satisfies $(\ep,\de)$-DP follows by the post-processing property of DP. The fact that $H^*_{n}$ is a confidence distribution follows from the fact that $p(\ta_0,Z)$ is monotonic in $\ta_0$, and $p(\ta_0,Z)\in [0,1]$. If $H^*_n$ were not superior, then this contradicts that $p(\ta_0,Z)$ corresponds to the UMP test among $\mscr D_{\ep,\de}^n$ for $H_0: \ta\leq \ta_0$ versus $H_1: \ta>\ta_0$. 
\end{proof}

\subsection{Application to distribution-free inference}\label{DistributionFree}
In this section, we show how our DP-UMP tests for count data can be used to test certain hypotheses for continuous data. In particular, we give a DP version of the sign and median test allowing one to test the median of either paired or independent samples. For an introduction to the sign and median tests, see Sections 5.4 and 6.4 of \citet[]{Gibbons2014}.
Let $\ep>0$ and $\de\in [0,1)$ be given, and let $N\sim \mathrm{Tulap}(0,b,q)$ for $b=e^{-\ep}$ and $q = \frac{2\de b}{1-b-2\de b}$.

{\bfseries Sign test:}
We observe $n$ iid pairs $(X_i,Y_i)$ for $i=1,\ldots, n$. Then for all $i=1,\ldots,n$, $X_i \overset d= X$ and $Y_i \overset d =Y$ for some random variables $X$ and $Y$. We assume that for any pair $(X_i,Y_i)$ we can determine if $X_i>Y_i$ or not. For simplicity, we also assume that there are no pairs with $X_i=Y_i$. Denote the unknown probability $\ta = P(X>Y)$. We want to test a hypothesis such as $H_0: \ta\leq\ta_0$ versus $H_1: \ta>\ta_0$. The sign test uses the test statistic $T = \#\{X_i>Y_i\}$. Since the sensitivity of $T$ is $1$, by Theorem \ref{TulapDP}, $T+N$ satisfies $(\ep,\de)$-DP. Note that the test statistic is distributed as $T \sim \mathrm{Binom}(n,\ta)$.  Using Algorithm \ref{PValueAlgorithm}, we  obtain a private $p$-value for the sign test as a post-processing of $T+N$.

To test whether $ \mathrm{median}(X) =  \mathrm{median}(Y)$, we consider the hypothesis $H_0: \ta=\frac 12$ versus $H_1: \ta\neq \frac 12$. Using the same test statistic $Z = T+N$, we obtain a $p$-value for the sign test via Algorithm \ref{NearlyUnbiasedPvalue}.

{\bfseries Median test:}
We observe two independent sets of iid data $\{X_i\}_{i=1}^n$ and $\{Y_i\}_{i=1}^n$, where all $X_i$ and $Y_i$ are distinct values, and we have a total ordering on these values. We assume that there exists random variables $X$ and $Y$ such that  $X_i\overset d =X$ and $Y_i \overset d = Y$ for all $i$. We want to test $H_0: \mathrm{median}(X)\leq \mathrm{median}(Y)$ versus $H_1: \mathrm{median}(X)>\mathrm{median}(Y)$. The median test uses the test statistic $T = \#\{i \mid \mathrm{rank}(X_i) > n\}$, where $\mathrm{rank}(X_i) = \#\{X_j\leq X_i\}+\#\{Y_j\leq X_i\}$. Since the sensitivity of $T$ is $1$, by Theorem \ref{TulapDP}, $T+N$ satisfies $(\ep,\de)$-DP. When $\mathrm{median}(X) = \mathrm{median}(Y)$, $T\sim \mathrm{HyperGeom}(n=n,m=n,k=n)$. Using Algorithm \ref{PValueAlgorithm}, with $\ul B$ replaced with the pmf of $\mathrm{HyperGeom}(n=n,m=n,k=n)$, we  obtain a private $p$-value for the median test as a post-processing of $T+N$.

To test whether $ \mathrm{median}(X) =  \mathrm{median}(Y)$, we consider the hypothesis $H_0: \ta=\frac 12$ versus $H_1: \ta\neq \frac 12$. Using the same test statistic $Z = T+N$, we obtain a $p$-value for the sign test via Algorithm \ref{NearlyUnbiasedPvalue}, with $\ul B$ replaced with the pmf of $\mathrm{HyperGeom}(n=n,m=n,k=n)$.

\section{Simulations}\label{s:Simulations}
\subsection{One-sided hypothesis testing simulations}\label{s:Simulations1}
\begin{figure}
\begin{minipage}{.48\linewidth}
  \includegraphics[width = \linewidth]{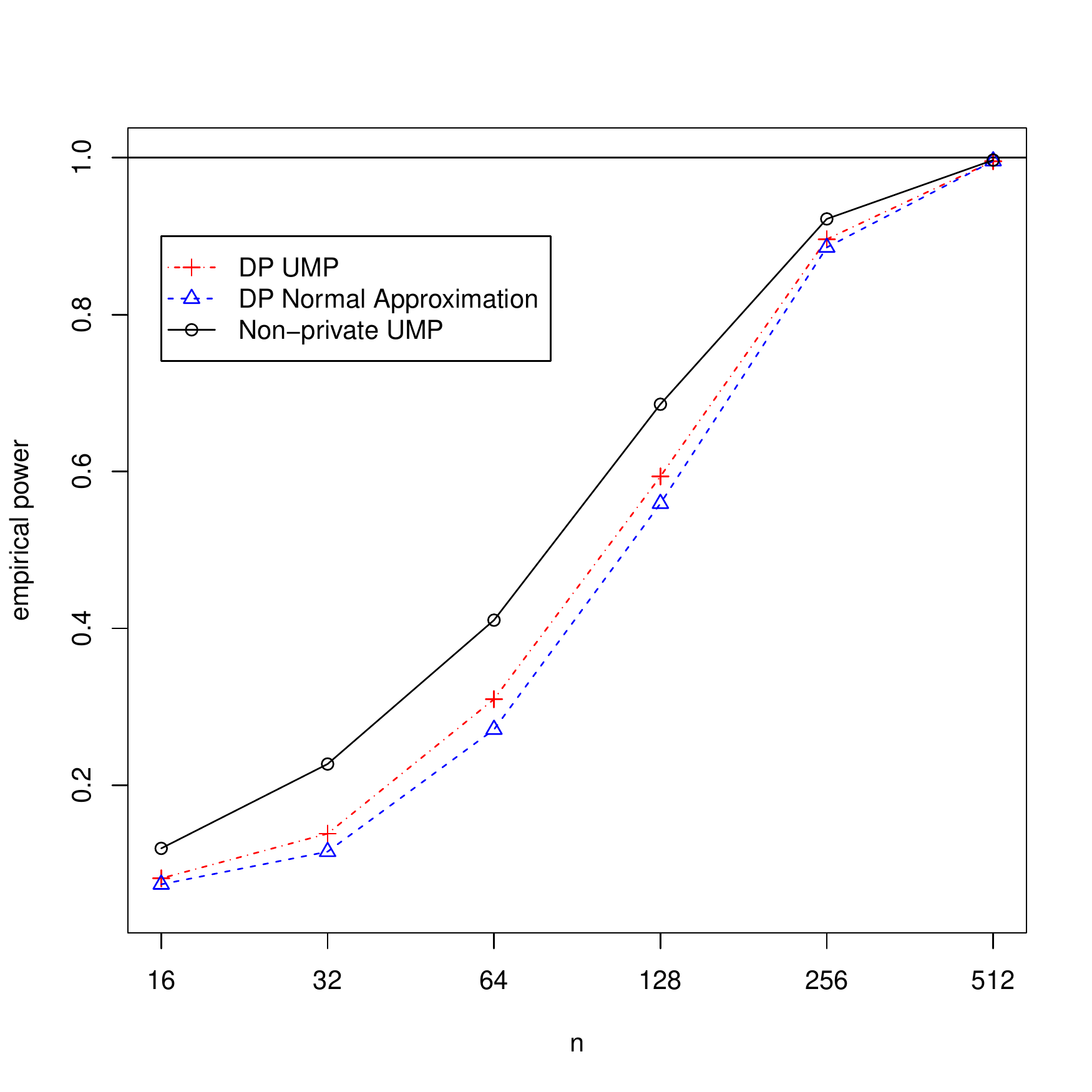}
  \captionof{figure}{Empirical power for UMP and Normal Approximation tests for $H_0: \theta\leq.9$ versus $H_1: \theta\geq .9$. The true value is $\theta = .95$. $\ep=1$ and $\de=0$. $n$ varies along the $x$-axis.}
  \label{fig:Power}
\end{minipage}
\hspace{.02\linewidth}
\begin{minipage}{.48\linewidth}
  \includegraphics[width = \linewidth]{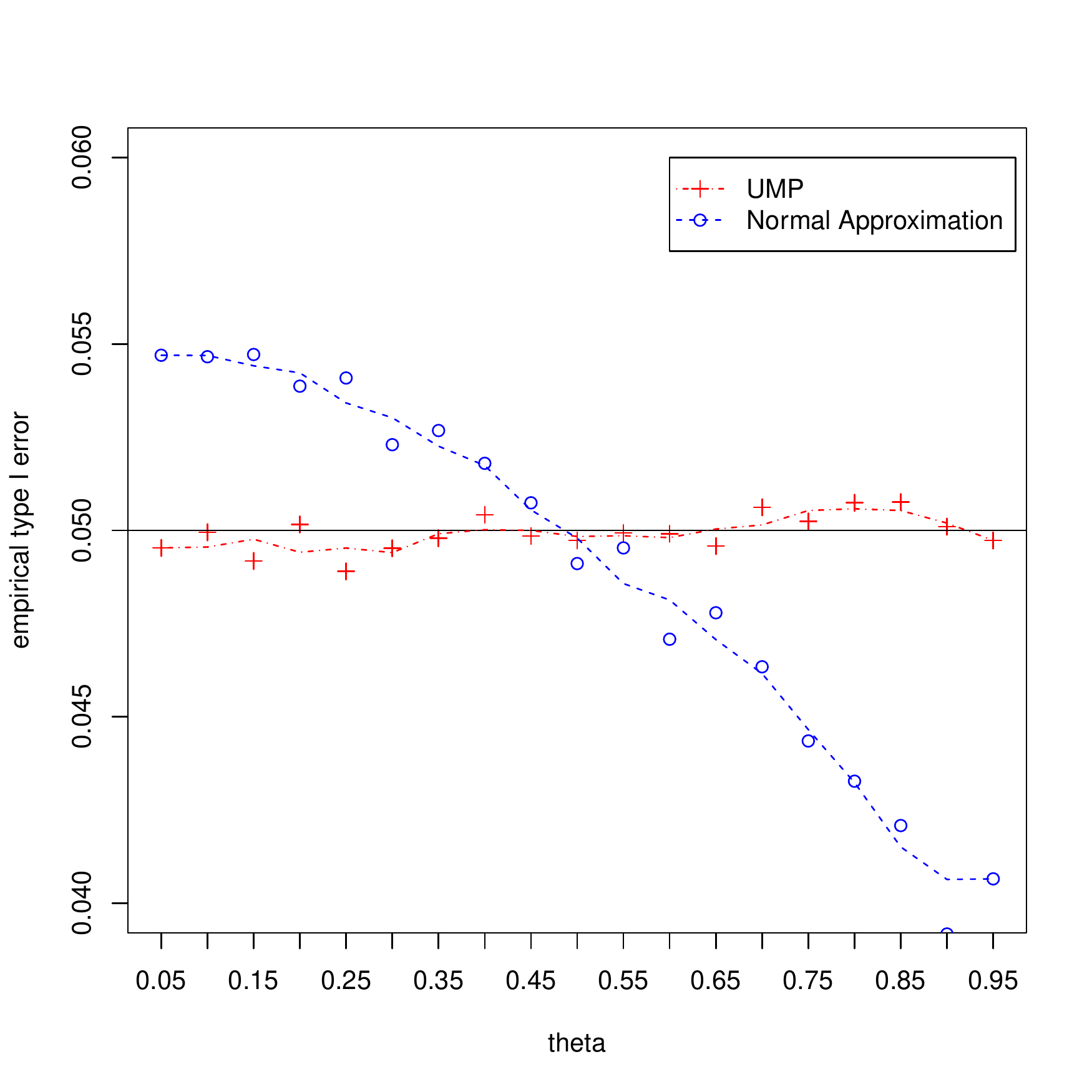}
  \captionof{figure}{Empirical type I error $\al$ for UMP and Normal Approximation tests for $H_0: \theta \leq \ta_0$ versus $H_1: \theta \geq \theta_0$.  $x$-axis is $\ta_0$. $n=30$, $\ep=1$, and $\de=0$. Target is  $\al=.05$.}
  \label{fig:TypeIerror}
\end{minipage}
\end{figure}

 In this section, we study both the empirical power and the empirical type I error of our DP-UMP test against the normal approximation proposed by \citet{Vu2009}. We define the empirical power to be the proportion of times a test `Rejects' when the alternative is true, and the empirical type I error as the proportion of times a test `Rejects' when the null is true. For our simulations, we focus on small samples as the noise introduced by DP methods is most impactful in this setting.

In Figure \ref{fig:Power}, we plot the empirical power of our UMP test, the Normal Approximation from \citet{Vu2009}, and the non-private UMP. For each $n$, we generate 10,000 samples from $\mathrm{Binom}(n,.95)$. We  privatize each $X$ by adding $N\sim\mathrm{Tulap}(0,e^{-\ep},0)$ for the DP-UMP and $L\sim \mathrm{Lap}(1/\ep)$ for the Normal Approximation. We compute the UMP $p$-value via Algorithm \ref{PValueAlgorithm} and the approximate $p$-value for $X+L$, using the cdf of $N\l(X, n/4+2/\ep^2\r)$. The empirical power is given by $(10000)^{-1}\#\{\text{$p$-value$<.05$}\}$. The DP-UMP test  indeed gives higher power compared to the Normal Approximation, but the approximation does not lose too much power. Next we see that type I error is another issue.

In Figure \ref{fig:TypeIerror} we plot the empirical type I error of the DP-UMP and the Normal Approximation tests. We fix $\ep=1$ and $\de=0$, and vary $\ta_0$. For each $\ta_0$, we generate 100,000 samples from $\mathrm{Binom}(30,\ta_0)$. For each sample, we compute the DP-UMP and Normal Approximation tests at type I error $\al=.05$. We plot the proportion of times we reject the null as well as moving average curves. The DP-UMP, which is provably at type I error $\al=.05$ achieves type I error very close to $.05$, but the Normal Approximation has a higher type I error for small values of $\ta_0$, and a lower type I error for large values of $\ta_0$. 

\subsection{Two-sided hypothesis testing simulations}\label{s:Simulations2}
\begin{figure}
\begin{minipage}{.48\linewidth}
    \includegraphics[width=\linewidth]{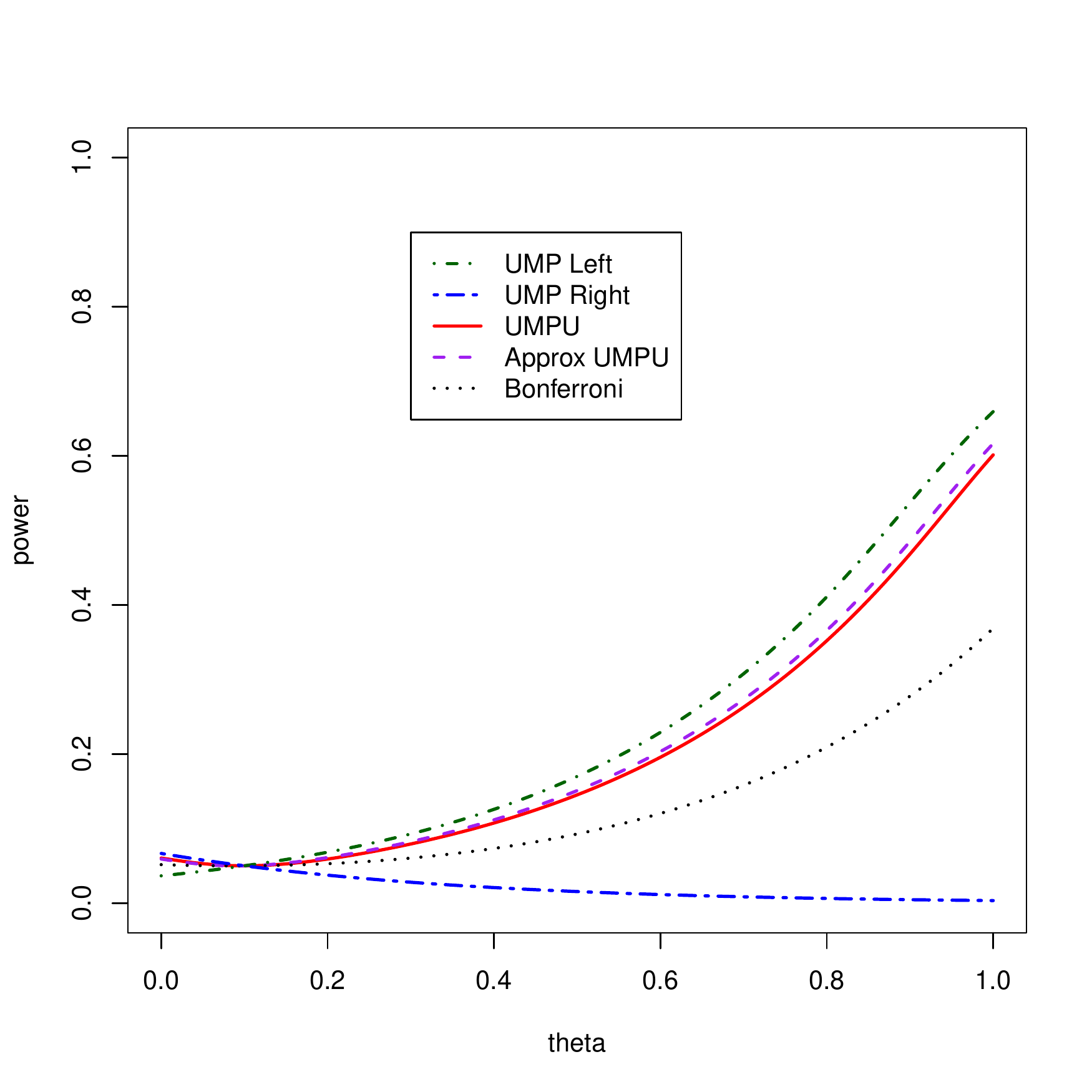}
    \captionof{figure}{$n=30$, $\ta_0=.1$, $\ep=.1$, $\de=0$, $\alpha=.05$. Testing $H_0: \ta=\ta_0$ versus $H_1:\ta_0\neq \ta_0$. $x$-axis is the truth.}
\label{fig:TwoSideTest_1.pdf}
\end{minipage}
\hspace{.02\linewidth}
\begin{minipage}{.48\linewidth}
    \includegraphics[width=\linewidth]{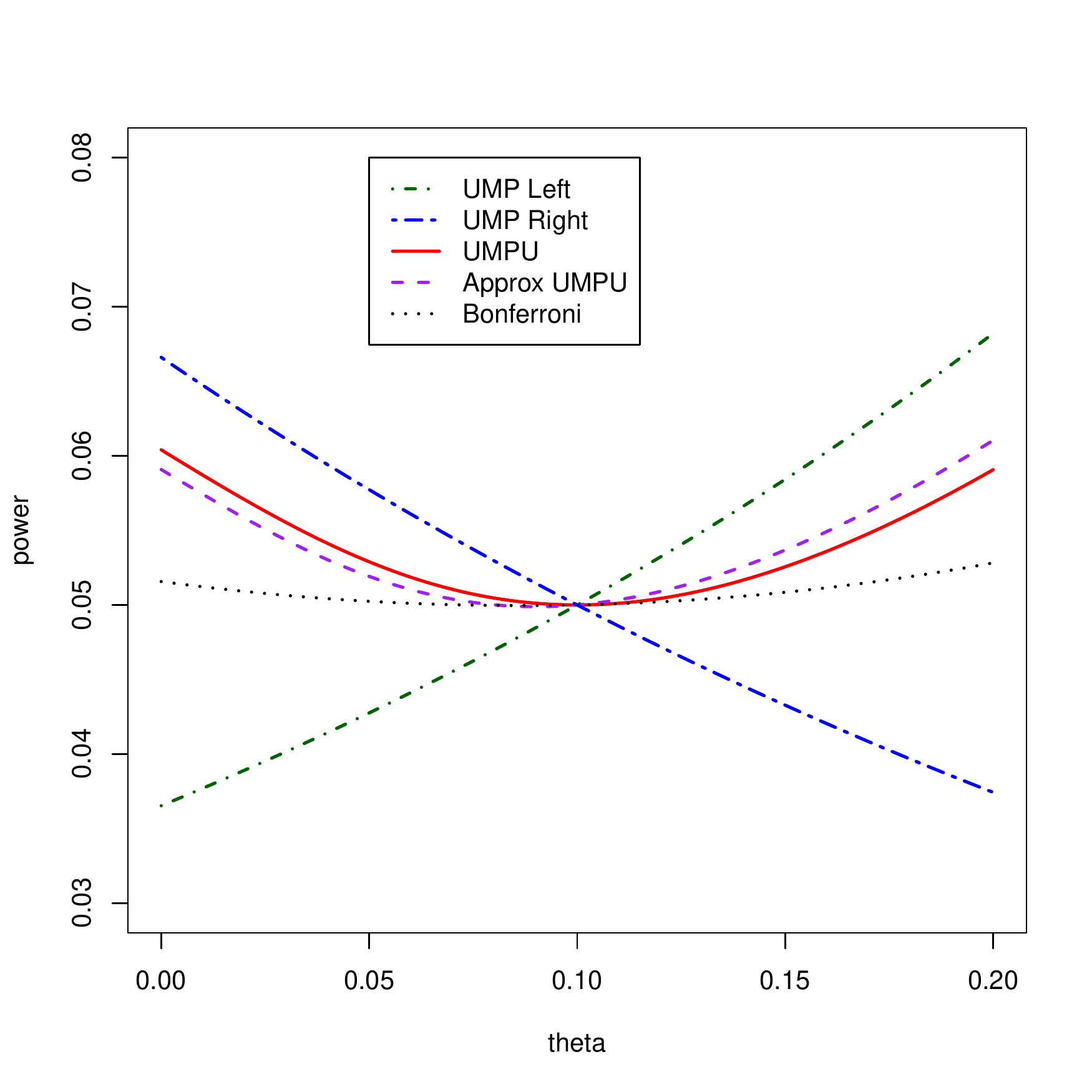}
    \captionof{figure}{Same parameters as Figure \ref{fig:TwoSideTest_1.pdf}. Testing $H_0: \ta=\ta_0$ versus $H_1:\ta_0\neq \ta_0$. $x$-axis is the truth for $0\leq \ta\leq .2$.}
\label{fig:TwoSideTest_12.pdf}
\end{minipage}
\end{figure}

In this section, we compare the various tests we have developed for $H_0: \ta = \ta_0$ versus $H_1: \ta \neq \ta_0$. We also consider the one-sided tests, since these provide upper bounds for the power of the two-sided tests. For each of the simulations, we were able to compute the power exactly, since we have closed forms for the tests in terms of the Tulap distribution, and power is just an expected value.

In Figures \ref{fig:TwoSideTest_1.pdf}-\ref{fig: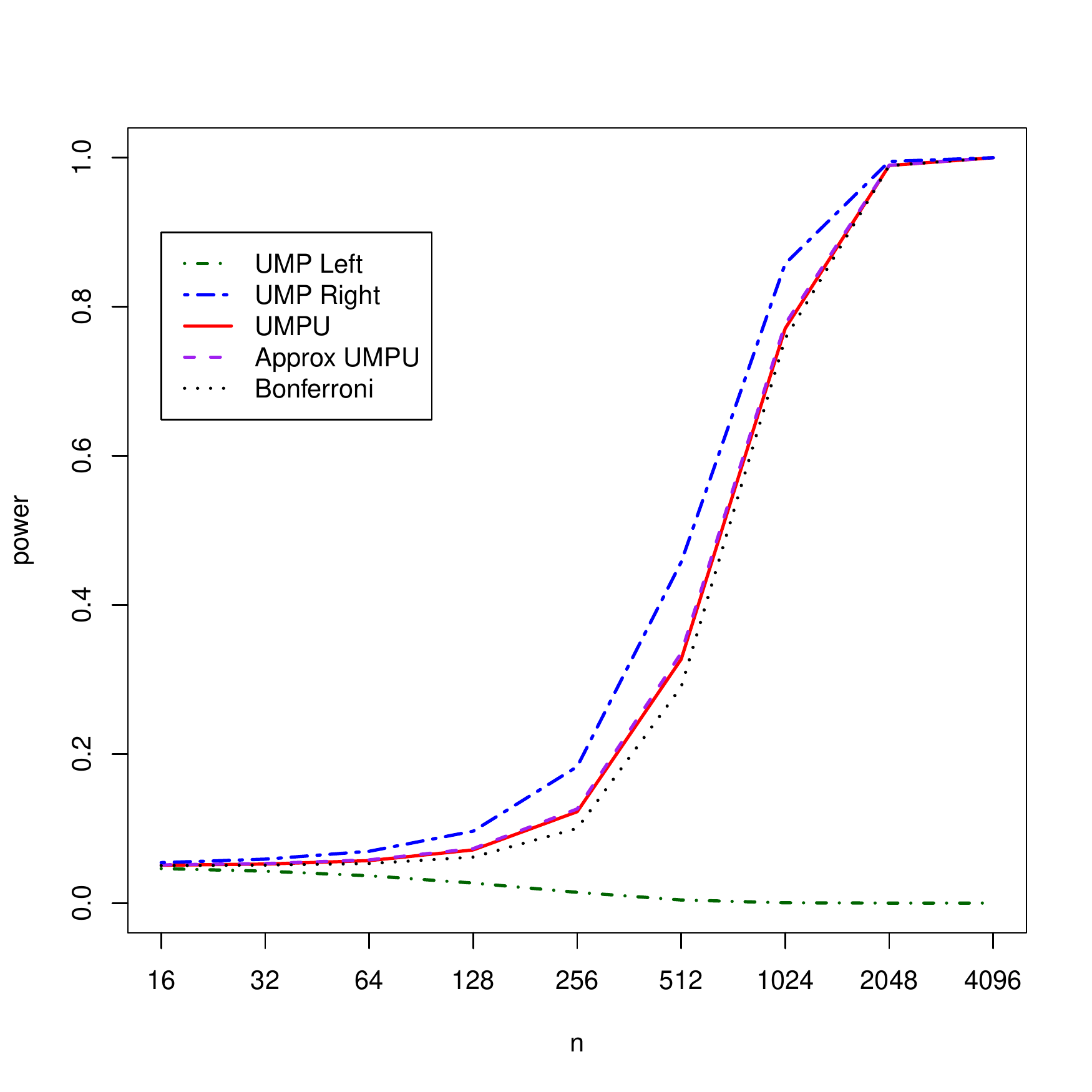}, ``UMP Left'' corresponds to the DP-UMP test for $H_0: \ta\leq \ta_0$, ``UMP Right'' corresponds to the DP-UMP test for $H_0: \ta\geq \ta_0$, ``UMPU'' corresponds to the test from Theorem \ref{UMPU}, ``Approx UMPU'' corresponds to the test from Section \ref{s:ApproxUnbiased}, and ``Bonferroni'' corresponds to the test from Proposition \ref{prop:Bonferroni}.

In Figures \ref{fig:TwoSideTest_1.pdf} and \ref{fig:TwoSideTest_12.pdf}, we see how the tests perform when the null value is more extreme ($\ta_0 = .1$). As our theory showed, the DP-UMP test for $H_0: \ta\leq \ta_0$ is the most powerful for true values $>\ta_0$, and the DP-UMP test for $H_0: \ta\geq \ta_0$ is the most powerful for true values $<\ta_0$. We see that the DP-UMPU test and the approximately unbiased test perform well on both sides. However, the Bonferroni test suffers a loss in power, demonstrating that either the DP-UMPU, or the approximately unbiased test should be preferred.

In Figure \ref{fig: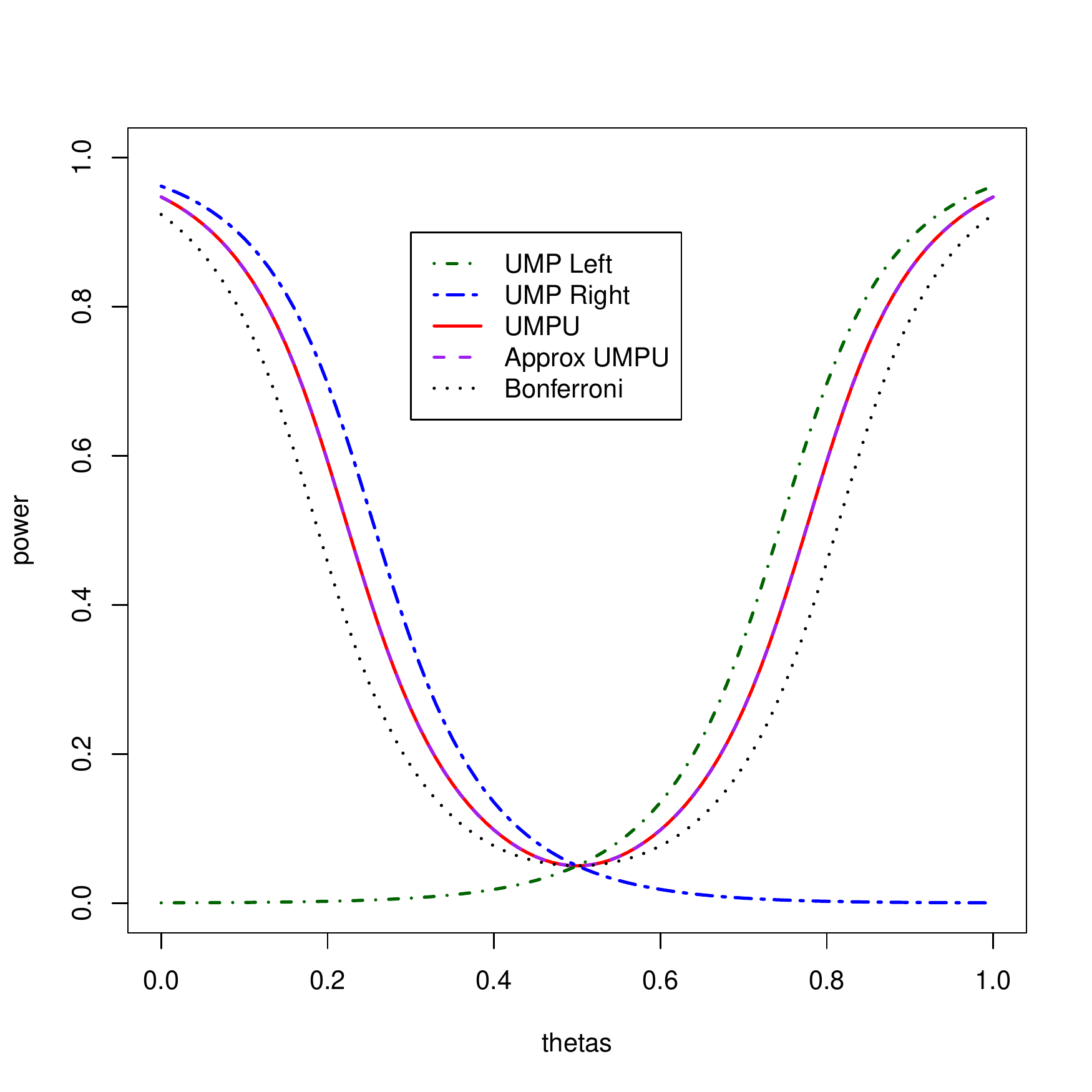} we study our tests when $n=100$ and $\ta_0=.5$. In this case, the approximate test is identical to the DP-UMPU test. The Bonferroni test can also be shown to be unbiased in this setting, however it still suffers a loss in power since it is not UMP. As in Figure \ref{fig:TwoSideTest_1.pdf}, the one-sided tests give upper bounds on the power.

In Figures \ref{fig:TwoSideTest_1.pdf}, \ref{fig:TwoSideTest_12.pdf}, and \ref{fig:TwoSideTest_5.pdf}, we see that all of the proposed tests have power equal to $\alpha=.05$ when the true value of $\theta$ is equal to the null. This confirms that all of our tests have type I error exactly $\alpha$, as claimed.

Figure \ref{fig:TwoSideN.pdf} compares the power of the tests as the sample size increases. In this simulation, we are testing $H_0: \ta_0 = .8$ versus $H_1: \ta_0\neq .8$ where the true value is $\ta = .75$. We use the values $\epsilon=.1$, $\delta=0$, and $\alpha=.05$. In this plot, we see again that the DP-UMP test for $H_0: \ta\leq \ta_0$ has more power than any of the other tests. The power of the UMPU and the approximate UMPU are indistinguishable, and the power of the Bonferroni test is slightly lower than either the UMPU or approximate UMPU tests. As we expect, the power of the DP-UMP test for $H_0: \ta\geq \ta_0$ goes to zero as $n\rightarrow \infty$. 

\begin{figure}
\begin{minipage}{.48\linewidth}
  \includegraphics[width=\linewidth]{TwoSideTest_5.pdf}
      \captionof{figure}{$n=100$, $\ta_0=.5$, $\ep=.1$, $\de=0$, $\alpha=.05$. Testing $H_0: \ta=\ta_0$ versus $H_1:\ta_0\neq \ta_0$. $x$-axis is the truth.}
\label{fig:TwoSideTest_5.pdf}
\end{minipage}
\hspace{.02\linewidth}
\begin{minipage}{.48\linewidth}
  \includegraphics[width=\linewidth]{TwoSideN.pdf}
    \caption{ $\ta_0 = .8$, truth= $.75$, $\ep=.1$, $\de=0$, $\alpha=.05$. Testing $H_0: \ta=\ta_0$ versus $H_1:\ta_0\neq \ta_0$. $x$-axis is the sample size $n$.}
      \label{fig:TwoSideN.pdf}
\end{minipage}
\end{figure}

\subsection{Two-sided confidence interval simulations}\label{s:Simulations3} 

In this section, we study the performance of the private confidence intervals given in Proposition \ref{prop:CI}. The label ``Approx UMPU'' corresponds to the interval $C^2_\al$ defined in Proposition \ref{prop:CI}, and ``Bonferroni'' corresponds to the interval $C^1_\al$ defined in Proposition \ref{prop:CI}.

In Figure \ref{fig: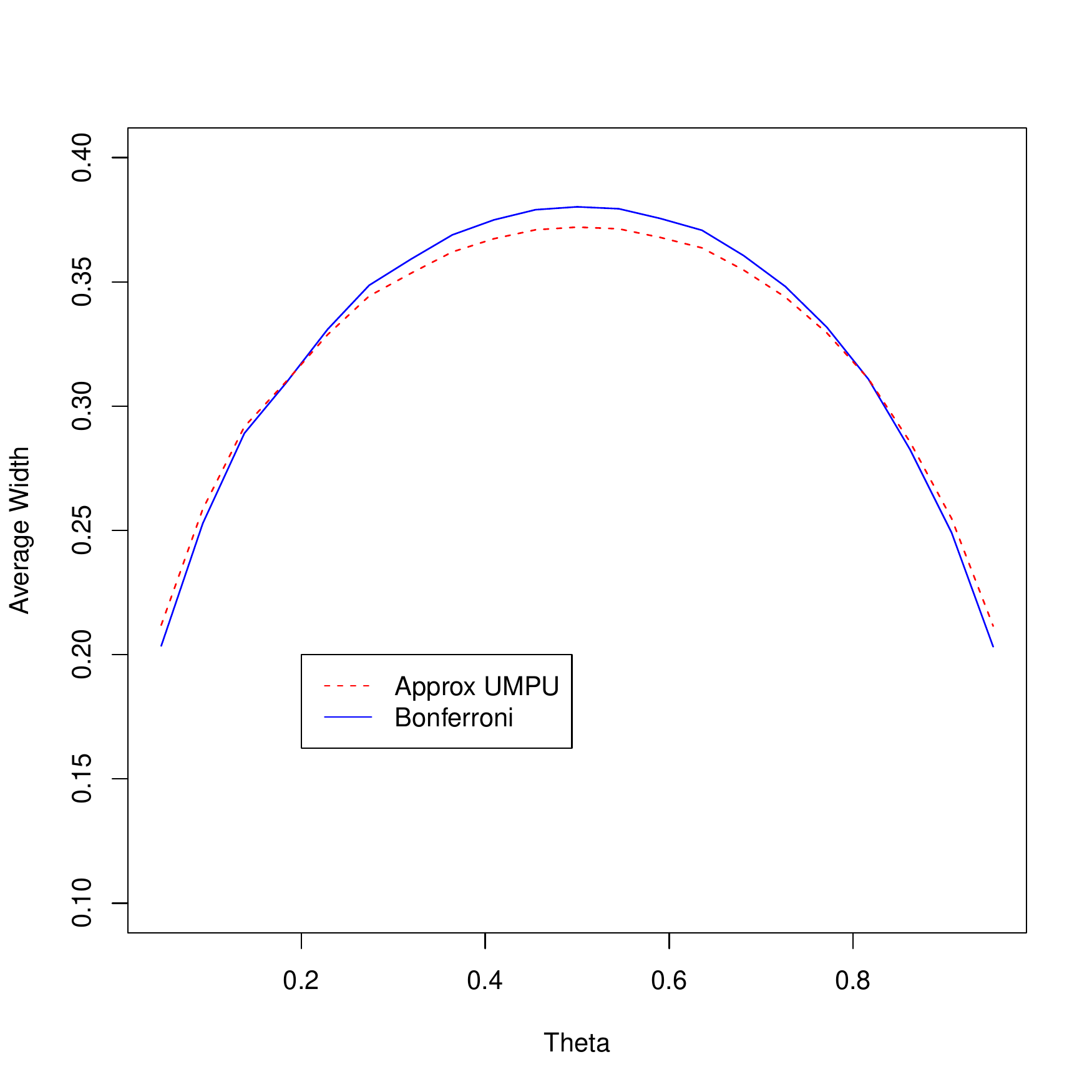}, we compute the average width of the intervals depending on the true value of $\ta$ over 1000 replicates for each value of $\ta$. For this simulation, $n=30$, $\ep=1$, $\de=0$, and $\alpha=.05$. In Figure \ref{fig:CI_30_1.pdf}, we see that the approximately unbiased confidence interval achieves smaller width than the Bonferroni confidence interval for moderate $\ta$s, at the expense of larger widths for more extreme $\ta$s. At $\ta = \frac 12$, the approximately unbiased confidence interval is $97.8\%$ the width of the Bonferroni confidence interval, but at $\ta$ close to $0$ or $1$, the approximately unbiased confidence interval is $4.1\%$ wider than the Bonferroni confidence interval. 
The empirical coverage varied between $.93$ and $.962$ for both confidence intervals, with an average coverage of $0.9496$. The Monte Carlo standard errors for these estimates is $\sqrt{.95*(1-.95)/(1000)} = 0.0069$, suggesting that the confidence intervals have coverage $(1-\alpha) = .95$ as claimed  by Proposition \ref{prop:CI}.

In Figure \ref{fig: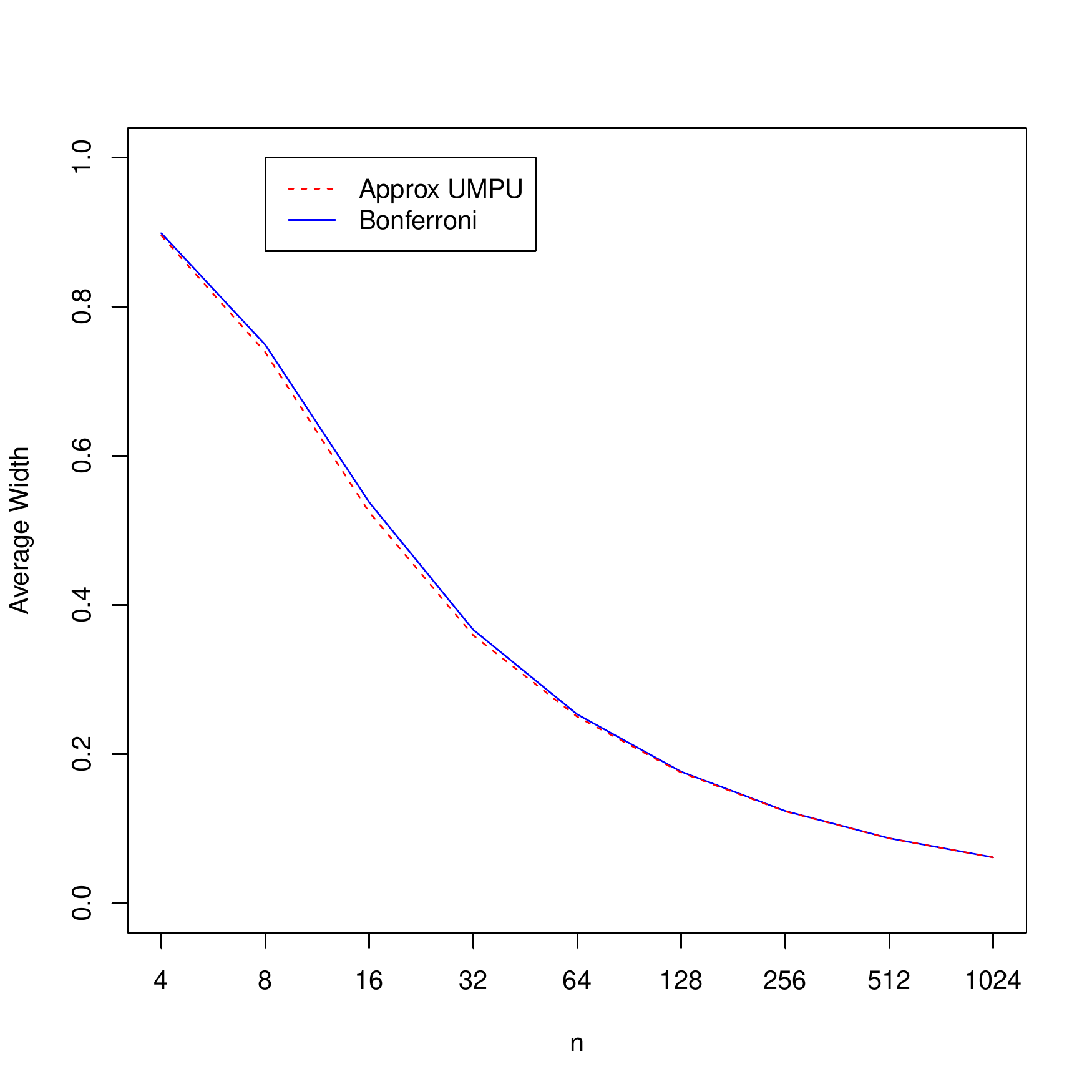}, we explore the average width, but vary the sample size instead of $\theta$. For this simulation, the true value of $\theta$ is $\frac 12$, $\ep=1$, $\de=0$, and $\alpha=.05$. The average width is computed at each value of $n$, over 1000 replicates. We see that the approximately unbiased confidence interval consistently achieves a slightly smaller average width than the Bonferroni confidence interval. The largest discrepancy is at $n=16$, where is approximately unbiased confidence interval has an average width $97.53\%$ of the Bonferroni confidence interval. We see that as the sample size increases, the average width of these two confidence intervals becomes more similar.

\begin{figure}
\begin{minipage}{.48\linewidth}
    \includegraphics[width=\linewidth]{CI_30_1.pdf}
    \captionof{figure}{$n=30$, $\ep=1$, $\de=0$, $\alpha=.05$. Average width of DP confidence intervals, over 1000 replications. $x$-axis is the true value of $\theta$.}
  \label{fig:CI_30_1.pdf}
\end{minipage}
\hspace{.02\linewidth}
\begin{minipage}{.48\linewidth}
   \includegraphics[width=\linewidth]{CI_N.pdf}
    \captionof{figure}{$\ta=1/2$, $\ep=1$, $\de=0$, $\alpha=.05$. Average width of DP confidence intervals, over 1000 replications. $x$-axis is the sample size $n$.}
  \label{fig:CI_N.pdf}
\end{minipage}
\end{figure}
 \section{Discussion and future directions}\label{s:Discussion}
 In this paper, we  derived uniformly most powerful simple and one-sided tests for Bernoulli data among all DP $\alpha$-level tests. Previously, while various hypothesis tests under DP have been proposed, none have satisfied such an optimality criterion. While our initial DP-UMP tests only output `Reject' or `Fail to Reject', we showed that they can be achieved by post-processing a noisy sufficient statistic. This allows us to produce private $p$-values which agree with the DP-UMP tests. We also applied our techniques to produce two-sided tests, confidence intervals, and confidence distributions.

   The ability to produce private $p$-values and confidence intervals, rather than simply an accept/reject decision, has practical importance as well, since both the statistics and scientific community have been strongly arguing for providing more complete information on basic statistical inference when determining statistical significance, as the latter cannot and should not be equated with scientific significance \citep{Nuzzo2014,Wasserstein2016}.

 A simple, yet fundamental observation that underlies our results is that DP tests can be written in terms of linear constraints. This idea alone allows for a new perspective on DP hypothesis testing, which is particularly applicable to other discrete problems, such as multinomial models or difference of population proportions. Stating the problem in this form allows for the consideration of all possible DP tests, and allows the exploration of UMP tests through numerical linear program solvers.


We showed that for exchangeable data, DP tests need only depend on the empirical distribution. For binary data, the empirical distribution is equivalent to the sample sum, which is a complete sufficient statistic for the binomial model. However, in general it is not clear whether optimal DP tests are always a function of complete sufficient statistics as is the case for classical UMP tests. It would be worth investigating whether there is a notion of sufficiency which applies for DP tests.

 When $\de=0$, our optimal noise adding mechanism, the proposed Tulap distribution, is related to the discrete Laplace distribution, which \citet{Ghosh2009} and \citet{Geng2016} also found is optimal for a general class of loss functions. For $\de>0$, a truncated discrete Laplace distribution is optimal for our problem. Little previous work has looked into optimal noise adding mechanisms for $(\ep,\de)$-DP. \citet{Geng2016:Approximate} studied this problem to some extent, but did not explore truncated Laplace distributions. \citet{Steinke2018} proposes that truncated Laplace can be viewed as the canonical distribution for $(\ep,\de)$-DP in a way that Laplace is canonical for $(\ep,0)$-DP. Further exploration in the use of truncated Laplace distributions in the $(\ep,\de)$-DP setting may be of interest.

 In our work, we found that there was a close connection between our UMP tests and the staircase distribution, which \citet{Geng2016} show is universal utility maximizing for binary data. However, \citet{Brenner2014} showed that when the data are non-binary, there is no universal utility maximizing mechanism such as the staircase mechanism. As \citet{Canonne2018} discuss, this result seems to imply that in settings where the data is non-binary, it may not be possible to develop DP-UMP tests. 
 
  \section*{Acknowledgements}
  We would like to thank Vishesh Karwa and Matthew Reimherr for helpful discussions and feedback on previous drafts. 
  This work is supported in part by NSF Award No. SES-1534433 to The Pennsylvania State University. Part of this work was done while the second author was visiting the Simons Institute for the Theory of Computing.


\bibliographystyle{awan} 
 \bibliography{./DataPrivacyBib}{}

\begin{thebibliography}{32}
\providecommand{\natexlab}[1]{#1}

\bibitem[{Awan and Slavkovi\'{c}(2018)}]{Awan2018:Binomial}
J.~Awan and A.~Slavkovi\'{c}.
\newblock Differentially private uniformly most powerful tests for binomial
  data.
\newblock In S.~Bengio, H.~Wallach, H.~Larochelle, K.~Grauman, N.~Cesa-Bianchi,
  and R.~Garnett, editors, \emph{Advances in Neural Information Processing
  Systems 31}, pages 4208--4218. Curran Associates, Inc., 2018.

\bibitem[{Awan and Slavkovi\'c(2019)}]{Awan2019:Structure}
J.~Awan and A.~Slavkovi\'c.
\newblock Structure and sensitivity in differential privacy: Comparing $k$-norm
  mechanisms.
\newblock \emph{ArXiv e-prints}, 2019.
\newblock Under Review.

\bibitem[{{Barrientos} et~al.(2017){Barrientos}, {Reiter}, and
  {Chen}}]{Barrientos2017}
A.~{Barrientos}, A.~{Reiter}, J.and~{Machanavajjhala}, and Y.~{Chen}.
\newblock {Differentially private significance tests for regression
  coefficients}.
\newblock \emph{ArXiv e-prints}, 2017.

\bibitem[{Bishop(2006)}]{Bishop2006}
C.~M. Bishop.
\newblock \emph{Pattern Recognition and Machine Learning (Information Science
  and Statistics)}.
\newblock Springer-Verlag New York, Inc., Secaucus, NJ, USA, 2006.

\bibitem[{Brenner and Nissim(2014)}]{Brenner2014}
H.~Brenner and K.~Nissim.
\newblock Impossibility of differentially private universally optimal
  mechanisms.
\newblock \emph{SIAM Journal on Computing}, 43(5):1513--1540, 2014.

\bibitem[{Canonne et~al.(2018)Canonne, Kamath, McMillan, Smith, and
  Ullman}]{Canonne2018}
C.~L. Canonne, G.~Kamath, A.~McMillan, A.~Smith, and J.~Ullman.
\newblock The structure of optimal private tests for simple hypotheses.
\newblock \emph{arXiv preprint arXiv:1811.11148}, 2018.

\bibitem[{Casella and Berger(2002)}]{Casella2002}
G.~Casella and R.~Berger.
\newblock \emph{Statistical Inference}.
\newblock Duxbury advanced series in statistics and decision sciences. Thomson
  Learning, 2002.

\bibitem[{Duchi et~al.(2018)Duchi, Jordan, and Wainwright}]{Duchi2017}
J.~C. Duchi, M.~I. Jordan, and M.~J. Wainwright.
\newblock Minimax optimal procedures for locally private estimation.
\newblock \emph{Journal of the American Statistical Association},
  113(521):182--201, 2018.

\bibitem[{Dwork et~al.(2006)Dwork, McSherry, Nissim, and
  Smith}]{Dwork2006:Sensitivity}
C.~Dwork, F.~McSherry, K.~Nissim, and A.~Smith.
\newblock \emph{Calibrating Noise to Sensitivity in Private Data Analysis},
  pages 265--284.
\newblock Springer Berlin Heidelberg, Berlin, Heidelberg, 2006.

\bibitem[{Dwork and Roth(2014)}]{Dwork2014:AFD}
C.~Dwork and A.~Roth.
\newblock The algorithmic foundations of differential privacy.
\newblock \emph{Foundations and Trends in Theoretical Computer Science},
  9:211--407, 2014.

\bibitem[{Gaboardi et~al.(2016)Gaboardi, Lim, Rogers, and
  Vadhan}]{Gaboardi2016}
M.~Gaboardi, H.~Lim, R.~Rogers, and S.~Vadhan.
\newblock Differentially private chi-squared hypothesis testing: Goodness of
  fit and independence testing.
\newblock In M.~F. Balcan and K.~Q. Weinberger, editors, \emph{Proceedings of
  The 33rd International Conference on Machine Learning}, volume~48 of
  \emph{Proceedings of Machine Learning Research}, pages 2111--2120. PMLR, New
  York, New York, USA, 2016.

\bibitem[{Gaboardi and Rogers(2018)}]{Gaboardi2018}
M.~Gaboardi and R.~Rogers.
\newblock Local private hypothesis testing: {C}hi-square tests.
\newblock In J.~Dy and A.~Krause, editors, \emph{Proceedings of the 35th
  International Conference on Machine Learning}, volume~80 of \emph{Proceedings
  of Machine Learning Research}, pages 1626--1635. PMLR, Stockholmsm{\"a}ssan,
  Stockholm Sweden, 2018.

\bibitem[{Geng and Viswanath(2016{\natexlab{a}})}]{Geng2016}
Q.~Geng and P.~Viswanath.
\newblock The optimal noise-adding mechanism in differential privacy.
\newblock \emph{IEEE Transactions on Information Theory}, 62(2):925--951,
  2016{\natexlab{a}}.

\bibitem[{Geng and Viswanath(2016{\natexlab{b}})}]{Geng2016:Approximate}
Q.~Geng and P.~Viswanath.
\newblock Optimal noise adding mechanisms for approximate differential privacy.
\newblock \emph{{IEEE} Trans. Information Theory}, 62(2):952--969,
  2016{\natexlab{b}}.

\bibitem[{Ghosh et~al.(2009)Ghosh, Roughgarden, and Sundararajan}]{Ghosh2009}
A.~Ghosh, T.~Roughgarden, and M.~Sundararajan.
\newblock Universally utility-maximizing privacy mechanisms.
\newblock In \emph{Proceedings of the Forty-first Annual ACM Symposium on
  Theory of Computing}, STOC '09, pages 351--360. ACM, New York, NY, USA, 2009.

\bibitem[{Gibbons and Chakraborti(2014)}]{Gibbons2014}
J.~Gibbons and S.~Chakraborti.
\newblock \emph{Nonparametric Statistical Inference, Fourth Edition: Revised
  and Expanded}.
\newblock Taylor \& Francis, 2014.

\bibitem[{Inusah and Kozubowski(2006)}]{Inusah2006}
S.~Inusah and T.~J. Kozubowski.
\newblock A discrete analogue of the laplace distribution.
\newblock \emph{Journal of Statistical Planning and Inference}, 136(3):1090 --
  1102, 2006.

\bibitem[{Karwa and Vadhan(2017)}]{Karwa2017}
V.~Karwa and S.~P. Vadhan.
\newblock Finite sample differentially private confidence intervals.
\newblock \emph{CoRR}, abs/1711.03908, 2017.

\bibitem[{Lehmann and Romano(2008)}]{Lehmann2008}
E.~Lehmann and J.~Romano.
\newblock \emph{Testing Statistical Hypotheses}.
\newblock Springer Texts in Statistics. Springer New York, 2008.

\bibitem[{Nuzzo(2014)}]{Nuzzo2014}
R.~Nuzzo.
\newblock Scientific method: statistical errors.
\newblock \emph{Nature News}, 506(7487):150, 2014.

\bibitem[{Schervish(1996)}]{Schervish1996}
M.~Schervish.
\newblock \emph{Theory of Statistics}.
\newblock Springer Series in Statistics. Springer New York, 1996.

\bibitem[{Sheffet(2017)}]{Sheffet2017}
O.~Sheffet.
\newblock Differentially private ordinary least squares.
\newblock In D.~Precup and Y.~W. Teh, editors, \emph{Proceedings of the 34th
  International Conference on Machine Learning}, volume~70 of \emph{Proceedings
  of Machine Learning Research}, pages 3105--3114. PMLR, International
  Convention Centre, Sydney, Australia, 2017.

\bibitem[{Solea(2014)}]{Solea2014}
E.~Solea.
\newblock \emph{Differentially private hypothesis testing for normal random
  variables}.
\newblock Master's thesis, The Pennsylvania State University, 2014.

\bibitem[{Steinke(2018)}]{Steinke2018}
T.~Steinke.
\newblock Private correspondence, 2018.

\bibitem[{Uhler et~al.(2013)Uhler, Slavkovi\'c, and Fienberg}]{Uhler2013}
C.~Uhler, A.~Slavkovi\'c, and S.~Fienberg.
\newblock Privacy-preserving data sharing for genome-wide association studies.
\newblock \emph{Journal of Privacy and Confidentiality}, 5, 2013.

\bibitem[{van~der Vaart(2000)}]{VanDerVaart2000}
A.~van~der Vaart.
\newblock \emph{Asymptotic Statistics}.
\newblock Cambridge Series in Statistical and Probabilistic Mathematics.
  Cambridge University Press, 2000.

\bibitem[{Vu and Slavkovi\'c(2009)}]{Vu2009}
D.~Vu and A.~Slavkovi\'c.
\newblock Differential privacy for clinical trial data: Preliminary
  evaluations.
\newblock In \emph{Proceedings of the 2009 IEEE International Conference on
  Data Mining Workshops}, ICDMW '09, pages 138--143. IEEE Computer Society,
  Washington, DC, USA, 2009.

\bibitem[{Wang et~al.(2018)Wang, Kifer, Lee, and Karwa}]{Wang2018}
Y.~Wang, D.~Kifer, J.~Lee, and V.~Karwa.
\newblock Statistical approximating distributions under differential privacy.
\newblock \emph{Journal of Privacy and Confidentiality}, 8(1), 2018.

\bibitem[{{Wang} et~al.(2015){Wang}, {Lee}, and {Kifer}}]{Wang2015:Revisiting}
Y.~{Wang}, J.~{Lee}, and D.~{Kifer}.
\newblock {Revisiting Differentially Private Hypothesis Tests for Categorical
  Data}.
\newblock \emph{ArXiv e-prints}, 2015.

\bibitem[{Wasserman and Zhou(2010)}]{Wasserman2010:StatisticalFDP}
L.~Wasserman and S.~Zhou.
\newblock A statistical framework for differential privacy.
\newblock \emph{JASA}, 105:489:375--389, 2010.

\bibitem[{Wasserstein et~al.(2016)Wasserstein, Lazar et~al.}]{Wasserstein2016}
R.~L. Wasserstein, N.~A. Lazar, et~al.
\newblock The asa’s statement on p-values: context, process, and purpose.
\newblock \emph{The American Statistician}, 70(2):129--133, 2016.

\bibitem[{Xie and Singh(2013)}]{Xie2013}
M.-g. Xie and K.~Singh.
\newblock Confidence distribution, the frequentist distribution estimator of a
  parameter: A review.
\newblock \emph{International Statistical Review}, 81(1):3--39, 2013.

\end{thebibliography}

 \setcounter{section}{6}
\section{Detailed proofs and technical lemmas}\label{s:Appendix}
\begin{proof}[Proof of Theorem \ref{SuffStatThm}.]
  Define $\phi'$ by $\phi'(\ul x) = \frac{1}{n!} \sum_{\pi \in \sa(n)} \phi(\pi(\ul x))$, where $\sa(n)$ is the symmetric group on $n$ letters. First note that $\phi(\pi(\ul x))$ satisfies \eqref{DPBinary} for all $\pi \in \sa(n)$, and that $\int \phi(\pi(\ul x)) \ d\mu_\ta= \int\phi(\ul x)\ d\mu_\ta$. Then by exchangeability,
  \begin{align*}\int\phi'(\ul x)\ d\mu_\ta = \int \frac 1{n!} \sum_{\pi \in \sa(n)} \phi(\pi(\ul x))\ d\mu_\ta
    &= \frac 1{n!} \sum_{\pi \in \sa(n)} \int \phi(\pi(\ul x))\ d\mu_\ta\\
    &= \frac 1{n!} \sum_{\pi \in \sa(n)} \int\phi(\ul x)\ d\mu_\ta = \int \phi(\ul x)\ d\mu_\ta.
    \end{align*}
  To see that $\phi'$ satisfies $(\ep,\de)$-DP, we check condition \eqref{DPBinary}:
  \begin{align*}\phi'(\ul x) = \frac{1}{n!} \sum_{\pi \in \sa(n)} \phi(\pi (\ul x))
    &\leq \frac{1}{n!} \sum_{\pi \in \sa(n)} (e^\ep \phi(\pi(\ul x')) + \de)\\
    &= \frac{1}{n!} \sum_{\pi \in \sa(n)} e^\ep \phi(\pi(\ul x')) + \frac{1}{n!} \sum_{\pi \in \sa(n)} \de
    = e^{\ep}\phi'(\ul x') + \de
    \end{align*}
  \begin{align*}
    (1- \phi'(\ul x))&=\l(1- \frac{1}{n!} \sum_{\pi \in \sa(n)} \phi(\pi(\ul x))\r)
    = \frac{1}{n!} \sum_{\pi \in \sa(n)} (1- \phi(\pi(\ul x)))\\
    &\leq \frac{1}{n!} \sum_{\pi \in \sa(n)} \l(e^\ep (1- \phi(\pi(\ul x'))) + \de\r)
    =e^\ep \l(1- \phi'(\pi (\ul x'))\r) + \de.\qedhere
    \end{align*}
  \end{proof}
  
\begin{lem}\label{TulapLem}
  \begin{enumerate}[1)]
  \item Let $L\sim \mathrm{DLap}(b)$, $U\sim \mathrm{Unif}(-1/2,1/2)$, $G_1,G_2\iid \mathrm{Geom}(1-b)$, and $N_0\sim \mathrm{Tulap}(m,b,0)$, where the pmf of $L$ is $f_L(x) = \frac{1-b}{1+b}b^{|x|}$ for $x\in \ZZ$, and the pmf of $G_1$ is $f_{G_1}(x)=(1-p)^xp$ for $x\in \{0,1,2,\ldots\}$. Then $L+U+m\overset d= G_1-G_2+U+m\overset d= N_0$.
\item Let $N$ be the output of Algorithm \ref{SampleTulap} with inputs $m,b,q$. Then $N\sim \mathrm{Tulap}(m,b,q)$.
\item The random variable $N\sim \mathrm{Tulap}(m,b,q)$ is continuous and symmetric about $m$.
  \end{enumerate}
\end{lem}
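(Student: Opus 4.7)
\textbf{Plan for Lemma \ref{TulapLem}.}
My approach is to tackle the three parts in order: first establish part~1) by a direct distributional identity plus a CDF computation, then deduce part~2) by identifying the algorithm as rejection sampling, and finally derive part~3) as an easy consequence of the construction.

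For part~1), I would first show $G_1-G_2 \overset d= L$. Writing $P(G_1-G_2 = k) = \sum_{j \geq 0} P(G_1 = k+j)P(G_2 = j)$ for $k \geq 0$, the geometric series collapses to $\frac{1-b}{1+b}b^{|k|}$, which is the pmf of $\mathrm{DLap}(b)$; the $k < 0$ case follows by symmetry. So $L+U+m \overset d = G_1 - G_2 + U + m$. Next I would show $L+U+m \overset d= N_0$ by computing the CDF of $Y := L+U+m$ directly and matching it against the piecewise formula for $F_{N_0}$ in Definition~\ref{TulapDefn}. Setting $k = [x-m]$, the convolution $P(Y \leq x) = \sum_j P(L=j)P(U \leq x-m-j)$ collapses: for every $j \leq k-1$ the uniform factor is $1$, for $j=k$ it equals $(x-m-k+\tfrac 12)$, and for $j \geq k+1$ it is $0$. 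Summing the geometric tail of the $\mathrm{DLap}(b)$ pmf and then plugging in yields
\[
P(Y \leq x) = \frac{b^{-k}}{1+b}\bigl(b + (1-b)(x-m-k+\tfrac 12)\bigr)
\]
for $x \leq [m]$, which is exactly the first branch of $F_{N_0}$; the branch $x > [m]$ is handled identically (or by the symmetry I prove in part~3). I expect this CDF computation to be the main bookkeeping step of the lemma, especially the case split at $x = [m]$.

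For part~2), by part~1) the variable $N$ produced on line~2 of Algorithm~\ref{SampleTulap} (before the rejection step) is distributed as $\mathrm{Tulap}(m,b,0)$, i.e., as $N_0$. The remainder of the algorithm is rejection sampling on the event $A = \{F_{N_0}(N_0) \in [q/2, 1-q/2]\}$. Since $N_0$ is continuous (part~3), $F_{N_0}(N_0) \sim \mathrm{Unif}(0,1)$, so $P(A) = 1-q$. Computing the conditional CDF,
\[
P(N_0 \leq x \mid A) = \begin{cases} 0 & F_{N_0}(x) < q/2, \\ \dfrac{F_{N_0}(x) - q/2}{1-q} & q/2 \leq F_{N_0}(x) \leq 1-q/2, \\ 1 & F_{N_0}(x) > 1-q/2, \end{cases}
\]
which is exactly $F_N$ from Definition~\ref{TulapDefn}. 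Hence the algorithm's output has the correct distribution.

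For part~3), continuity of $N_0 = L+U+m$ follows because $U$ is absolutely continuous and independent of $L$, so any atom of $N_0$ would force an atom of $U$; explicitly, $P(N_0 = t) \leq \sum_j P(L=j)P(U = t-j-m) = 0$. The truncated variable $N$ is $N_0$ conditioned on a positive-probability event, so it inherits continuity. For symmetry, $G_1 \overset d = G_2$ implies $G_1 - G_2 \overset d = -(G_1-G_2)$, and $U \overset d = -U$, so $N_0 - m \overset d= -(N_0 - m)$, giving $N_0$ symmetric about $m$. Because continuity and symmetry about $m$ imply $F_{N_0}(m+t)+F_{N_0}(m-t)=1$, the rejection event $A$ is invariant under $N_0 \mapsto 2m - N_0$, and so $N$ remains symmetric about $m$.
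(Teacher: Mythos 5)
Your proposal is correct and follows essentially the same route as the paper's proof: establish $L\overset d= G_1-G_2$, compute the cdf of $L+U+m$ by convolution and match it to $F_{N_0}$ (with the $x>[m]$ branch handled by symmetry), identify the algorithm's loop as rejection sampling onto the event $\{q/2\leq F_{N_0}(N_0)\leq 1-q/2\}$, and deduce continuity and symmetry from the construction. The only cosmetic differences are that you prove the geometric-difference identity directly where the paper cites a reference, and you convolve cdfs where the paper first writes down the density of $L+U$ and then integrates.
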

\begin{proof}[Proof of Lemma \ref{TulapLem}.]
  \begin{enumerate}[1)]
\item  We know that $L\overset d= G_1-G_2$, as shown in \citet{Inusah2006}. Let $f_U(\cdot)$ denote the pdf of $U$, and $F_U$ denote the cdf of $U$. We will use the property that $f_U(x)=f_U(-x)$ and $F_U(-x) =1-F_U(x)$. Then the pdf of $L+U$ is 
  \begin{align*}
    f_{L+U}(x) &= f_U(x-[x])\l(\frac{1-b}{1+b}\r) b^{|[x]|}= \begin{cases}
f_U(x-[x]) \l(\frac{1-b}{1+b}\r)b^{-[x]}&[x]\leq 0\\
f_U(x-[x])\l(\frac{1-b}{1+b}\r) b^{[x]}&[x]>0.
\end{cases}
  \end{align*}
If $[x]\leq 0$, then we have 
\begin{align*}
  F_{L+U}(x) &= \int_{-\infty}^x f_U(t-[t]) \l( \frac{1-b}{1+b}\r) b^{-[t]} \ dt\\
&= \int_{-\infty}^{[x]-1/2} f_U(t-[t])\l( \frac{1-b}{1+b}\r) b^{-[t]} \ dt + \int_{[x]-1/2}^xf_U(t-[x])\l( \frac{1-b}{1+b}\r) b^{-[x]} \ dt\\
&= \sum_{t=-\infty}^{[x]-1}\l(\frac{1-b}{1+b}\r) b^{-t} +  \int_{[x]-1/2}^xf_U(t-[x])\l( \frac{1-b}{1+b}\r) b^{-[x]} \ dt\\
&= \frac{b^{-[x]+1}}{1+b}+F_U(x-[x])\l(\frac{1-b}{1+b}\r) b^{-[x]}\\&= \frac{b^{-[x]}}{1+b}(b+F_U(x-[x])(1-b)).
\end{align*}
Since, $L+U$ is symmetric about zero, as both $L$ and $U$ are symmetric about zero, for $[x]\geq 0$ we have $F_{L+U}(x) = 1-F_{L+U}(-x)$. The rest follows by replacing $x$ with $x-m$, and $F_U(x) = x+1/2$.
\item If $q=0$, then by part 1), it is clear that the output of Algorithm \ref{SampleTulap} has the correct distribution. If $q>0$, then by rejection sampling, we have that $N\sim \mathrm{Tulap}(m,b,q)$. For an introduction to rejection sampling, see \citet[Chapter 11]{Bishop2006}.
\item This property follows immediately from 1), and that $\mathrm{Tulap}(m,b,q)$ is truncated equally on both sides of $m$.\qedhere
\end{enumerate}
\end{proof}

 \begin{lem}\label{CLem}
Let $N\sim \mathrm{Tulap}(m,b,q)$ and let $t\in \ZZ$. Then 
$\ds F_{N}(t)= \begin{cases}
b^{-t}C(m)&t\leq [m]\\
1-b^tC(-m)&t>[m],
\end{cases}$
where $C(m) = (1+b)^{-1} b^{[m]} (b+([m]-m+1/2)(1-b))$.
 $C(m)$ is positive, monotone decreasing, and continuous in $m$. Furthermore, $b^{-[m]}C(m) = 1-b^{[m]}C(-m)$.
\end{lem}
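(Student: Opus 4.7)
\textbf{Proof proposal for Lemma \ref{CLem}.} The claim is essentially a direct computation with the piecewise cdf of Definition \ref{TulapDefn}, together with some elementary verifications about $C(m)$. Since the asserted formula does not involve $q$, and $F_N$ agrees with $F_{N_0}$ on the support (which contains all integers $t$ whenever $q$ is small enough for the statement to make sense), the plan is to reduce immediately to evaluating $F_{N_0}(t)$ at integer $t$, for $N_0\sim\mathrm{Tulap}(m,b,0)$.

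The key arithmetic observation is: for $t\in\ZZ$ and $m\in\RR$, one has $[t-m]=t-[m]$. Indeed, writing $m=[m]+r$ with $r\in[-1/2,1/2]$, we get $t-m=(t-[m])-r$ with $-r\in[-1/2,1/2]$, so the nearest integer to $t-m$ is $t-[m]$. Using this and $[-m]=-[m]$, I would substitute into the two branches of $F_{N_0}$ from Definition \ref{TulapDefn}. In the branch $t\leq[m]$ this gives
\[F_{N_0}(t)=\frac{b^{-(t-[m])}}{1+b}\bigl(b+([m]-m+\tfrac12)(1-b)\bigr)=b^{-t}\,C(m),\]
and symmetrically in the branch $t>[m]$ one obtains $F_{N_0}(t)=1-b^t C(-m)$ after recognising that the bracketed factor, with $m\mapsto -m$, is exactly the defining expression of $C(-m)$.

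For the properties of $C(m)$, I would argue as follows. Positivity is immediate because $[m]-m+\tfrac12\in[0,1]$, so the inner factor lies in $[b,1]$, and $b^{[m]}/(1+b)>0$. On each interval $\{m:[m]=k\}$ the function $C(m)$ is linear in $m$ with slope $-(1+b)^{-1}b^k(1-b)<0$, so strict decrease on each piece is clear. Continuity across the half-integer transition points $m=k+\tfrac12$ I would check by computing the one-sided limits: from the left, $[m]=k$ gives $C\to b^{k+1}/(1+b)$; from the right, $[m]=k+1$ gives $C\to(1+b)^{-1}b^{k+1}(b+(1-b))=b^{k+1}/(1+b)$. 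These agree, yielding continuity, and combined with piecewise strict decrease this gives monotone decrease globally.

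The identity $b^{-[m]}C(m)=1-b^{[m]}C(-m)$ is a one-line computation: multiplying the definition through,
\[b^{-[m]}C(m)+b^{[m]}C(-m)=\frac{1}{1+b}\Bigl(2b+\bigl(([m]-m+\tfrac12)+(m-[m]+\tfrac12)\bigr)(1-b)\Bigr)=\frac{2b+(1-b)}{1+b}=1.\]
There is no real obstacle here; the only thing to be careful about is the tie-breaking convention for $[\cdot]$ at half-integers, but since $t\in\ZZ$, the ambiguity only affects the boundary case where $m$ is itself a half-integer, which is reconciled precisely by the continuity check above. Thus the whole lemma reduces to the integer-floor identity $[t-m]=t-[m]$ plus routine algebra.
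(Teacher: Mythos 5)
Your proof is correct and follows essentially the same route as the paper's: direct evaluation of the cdf at integer arguments (the paper routes this through the representation in Lemma \ref{TulapLem}, you substitute into Definition \ref{TulapDefn} via $[t-m]=t-[m]$, which amounts to the same computation), followed by the identical checks of positivity, piecewise-linear strict decrease, matching one-sided limits at the half-integer transition points, and the complementary identity via $([m]-m+\tfrac12)+(m-[m]+\tfrac12)=1$. One small caveat: for $q>0$ the truncated cdf $F_N$ is the rescaling $(F_{N_0}-q/2)/(1-q)$ rather than being equal to $F_{N_0}$ on the support, so the displayed formula is really a statement about $F_{N_0}$ (i.e.\ $q=0$), which is how both your argument and the paper's own proof (and all its subsequent uses of the lemma) implicitly read it.
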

\begin{proof}[Proof of Lemma \ref{CLem}.]
The form of the cdf at integer values is easily verified from Lemma \ref{TulapLem}. 
It is clear that $C(m)$ is positive. It is also clear that $C(m)$ is continuous and monotone decreasing for all $m\in \RR\setminus \{z+1/2\mid z\in \ZZ\}$. So, we will check that $C$ is continuous at $m=z+1/2$ for $z\in \ZZ$:
\[\lim_{\ep \downarrow 0} (1+b) C(z+1/2+\ep) = \lim_{\ep \downarrow 0} b^{z+1}(b+(1-\ep)(1-b)) = b^{z+1}\]
\[\lim_{\ep \downarrow 0} (1+b)C(z+1/2-\ep) = \lim_{\ep \downarrow 0} b^{z}(b+\ep(1-b)) = b^{z+1}.\]
Since $C$ is continuous on $\RR$ and monotone decreasing almost everywhere, it follows that $C$ is monotone decreasing on $\RR$ as well.

Call $\al(m) = [m]-m+1/2$, which lies in $[0,1]$. Note that $\al(-m) = -[m]+m+1/2=1-\al(m)$. Then 
\begin{align*}
  (1+b)b^{-[m]}C(m)&=b+\al(m)(1-b)
=b+(1-\al(-m)(1-b))
=b+(1-b)-\al(-m)(1-b)\\
&=(1+b)-(b+\al(-m)(1-b))
=(1+b)(1-b^{[m]}C(-m)).\qedhere
\end{align*}
\end{proof}

\begin{proof}[Proof of Lemma \ref{RecurrenceLem1}.]
First we show that 1) and 2) are equivalent. Clearly the $m$ is the same for both. We must show that for $p\in (0,1)$, $e^\ep p\leq 1-e^{-\ep}(1-p)$ whenever $p\leq \frac{1}{1+e^{\ep}}$, and $e^{\ep}p>1-e^{-\ep}(1-p)$ when $p>\frac{1}{1+e^\ep}$. Setting equal $e^\ep p = 1-e^{-\ep}(1-p)$ we find that $p =\frac{1}{1+e^\ep}$. As $p\rightarrow 1$, we have that $e^\ep p > 1- e^{-\ep}(1-p)$ and as $p\rightarrow 0$, we have $e^\ep p <1-e^{-\ep}(1-p)$. We conclude that 1) and 2) are equivalent.

Next we show that 2) and 3) are equivalent. First we show that $F_{N_0}(x-m)$ satisfies the recurrence relation in 2). Set $b=e^{-\ep}$. First we show that for $t\in \ZZ$ such that $t\leq [m]-1$, $F_{N_0}(t-m)\leq \frac{1}{1+e^\ep}$ and for $t\geq [m]$, $F_{N_0}(t-m)\geq \frac{1}{1+e^\ep}$. Since, $F_{N_0}(\cdot-m)$ is increasing, it suffices to check $t=[m]-1$ and $t=[m]$:
\[F_{N_0}([m]-1-m) = b^{-[m]+1+[m]} \frac{(b+([m]-m+1/2)(1-b))}{1+b}\leq \frac{b}{1+b}=\frac{1}{1+e^\ep}\]
\[F_{N_0}([m]-m) = b^{-[m]+[m]}\frac{(b+([m]-m+1/2)(1-b))}{1+b} \geq \frac{b}{1+b} = \frac{1}{1+e^\ep},\]
where we use the fact that $0\leq [m]-m+1/2\leq 1$. 
Now, let $t\in \ZZ$ and check three cases:
\begin{itemize}
\item Let $t< [m]$, then $e^\ep F_{N_0}(t-m)=e^\ep b^{-t}C(m)=b^{-(t+1)}C(m) = F_{N_0}(t+1-m)$.
\item Let $t=[m]$. Using Lemma \ref{CLem}, 
$1-e^{-\ep}(1-F_{N_0}(t-m)) 
= 1-b(1-b^{-[m]}C(m))
=1-b+b(1-b^{[m]}C(-m))
=1-b+b-b^{[m+1]}C(-m)
=F_{N_0}(t+1-m)$.
\item Let $t>m$. Then 
$1-e^{-\ep}(1-F_{N_0}(t-m))
=1-b(b^{t}C(-m))
=1-b^{t+1}C(-m)
=F_{N_0}(t+1-m)$.
\end{itemize}
Finally, for any value $c\in (0,1)$, we can find $m$ such that $F_{N_0}(0-m)=c$, by the intermediate value theorem. On the other hand, given $m$, set $\phi(0) = F_{N_0}(0-m)$.
\end{proof}

\begin{proof}[Proof of Lemma \ref{BingLem}.]
  Note that $(\phi_1 - \phi_2)(g-kf)\geq 0$ for almost all $x\in \mscr X$ (with respect to $\mu$). Then $\int (\phi_1 - \phi_2)(g-kf) \ d\mu \geq 0$. Hence, $\int \phi_1 g \ d\mu - \int \phi_2 g \ d\mu \geq k(\int \phi_1 f\ d\mu - \int \phi_2 f \ d\mu) \geq 0$.
\end{proof}

\begin{proof}[Proof of Theorem \ref{UMP1}.]
 First note that $\phi^*\in \mscr D_{\ep,0}^n$, since by Lemma \ref{RecurrenceLem1}, $\phi^*(x) =\min\{e^\ep \phi^*(x-1),1-e^{-\ep}(1-\phi^*(x-1))\}$. So, $\phi^*$ satisfies \eqref{DP1}-\eqref{DP4}. Next, since by Lemma \ref{CLem}, $F_{N_0}(x-m)$ is a continuous, decreasing function in $m$ with $\lim_{m\uparrow \infty} F_{N_0}(x-m)=0$ and $\lim_{m\downarrow -\infty} F_{N_0}(x-m)=1$, we can find $m$ such that $E_{\ta_0} \phi^*(x)=\al$ by the Intermediate Value Theorem. 

Now that we have argued that $\phi^*$ is a valid test, the rest of the result is an application of Lemma \ref{BingLem}. It remains to show that the assumptions are satisfied for the lemma to apply. Let $\phi\in \mscr D_{\ep,0}^n$ such that $E_{\ta_0} \phi(x)\leq \al$. 

We claim that either $\phi(x)=\phi^*(x)$ for all $x\in \{0,1,2,\ldots, n\}$ or there exists $y$ such that $\phi(y)<\phi^*(y)$. To the contrary, suppose that $\phi^*(x)\leq \phi(x)$ for all $x$ and there exists $z$ such that $\phi^*(z)<\phi(z)$. But this implies that $\phi^*(0)<\phi(0)$  (as we implied by the following paragraphs, by setting $y=0$). Then $E_{\ta_0} \phi^*(x)<E_{\ta_0} \phi(x)\leq \al$ since the pmf of $\mathrm{Binom}(n,\ta)$ is nonzero at $x=0$, contradicting the fact that $E_{\ta_0} \phi^*(x)=\al$. We conclude that there exists $y$ such that $\phi^*(y)>\phi(y)$. 

Let $y$ be the smallest point in $\{0,1,2,\ldots, n\}$ such that $\phi^*(y)\geq\phi(y)$. We claim that for all $x\geq y$, we have $\phi^*(x)\geq \phi(x)$. We already know that for $y=x$, the claim holds. For induction, suppose the claim holds for some $x\geq y$. By Lemma \ref{RecurrenceLem1}, we know that $\phi^*(x+1) = \min\{e^{\ep} \phi^*(x), 1-e^{-\ep} (1-\phi^*(x))\}$, and by constraints \eqref{DP1}-\eqref{DP4}, we know that $\phi(x+1) \leq \min\{e^{\ep} \phi(x), 1-e^{-\ep}(1-\phi(x))\}$. 
\begin{itemize}
\item Case 1: If $\phi^*(x)\leq \frac{1}{1+e^{\ep}}$, then by Lemma \ref{RecurrenceLem1},
$\phi^*(x+1) = e^\ep \phi^*(x)\geq e^\ep \phi(x)\geq \phi(x+1).$
\item Case 2: If $\phi^*(x)> \frac{1}{1+e^\ep}$, then by Lemma \ref{RecurrenceLem1},
$\phi^*(x+1) = 1-e^{-\ep}(1-\phi^*(x)) \geq 1-e^{-\ep}(1-\phi(x))\geq \phi(x+1).$
\end{itemize}
 We conclude that $\phi^*(x+1)\geq \phi(x+1)$. By induction, the claim holds for all $x\in \{y,y+1,y+2,\ldots, n\}$. So, we have that $\phi^*(x)\geq \phi(x)$ for $x\in \{y,y+1,y+2,\ldots, n\}$ and $\phi^*(x)\leq \phi(x)$ for $x\in \{0,1,2,\ldots, y-1\}$. Since $\mathrm{Binom}(n,\ta)$  has a monotone likelihood ratio in $\ta$, by Lemma \ref{BingLem} we have that $E_{\ta_1} \phi^*(x) \geq E_{\ta_1}\phi(x)$. We conclude that $\phi^*$ is UMP-$\al$ among $\mscr D_{\ep,0}^n$ for the stated hypothesis test.
\end{proof}

\begin{proof}[Proof of Lemma \ref{RecurrenceLem2}.]
  We will abbreviate $F(x)\defeq F_{N_0}(x-m)$, where $N_0 \sim \mathrm{Tulap}(0,b=e^{-\ep},0)$ to simplify notation. First we will show that 1) and 2) are equivalent. It is clear that $y$ and $m$ are the same in both. Next consider 
  $1-e^{-\ep}(1-p) + e^{-\ep}\de = e^\ep p+\de$, solving for $p$ gives $p=\frac{1-\de}{1+e^{\ep}}$. Considering as $p\rightarrow 0$ and $p\rightarrow 1$, we see that $1-e^{-\ep}(1-p)+e^{-\ep} \de \geq e^\ep p + \de$ when $p \leq \frac{1-\de}{1+e^{\ep}}$ and $1-e^{-\ep}(1-p)+e^{-\ep} \de \leq e^\ep p + \de$ when $p \geq \frac{1-\de}{1+e^{\ep}}$. 

Next solving $1-e^{-\ep}(1-p)+e^{-\ep}\de=1$ for $p$ gives $p=1-\de$. So, $1-e^{-\ep}(1-p)+e^{-\ep}\de\leq 1$ when $p\leq 1-\de$ and $1-e^{-\ep}(1-p)+e^{-\ep}\de\geq 1$ when $p\geq 1-\de$. Lastly, solving $e^{\ep}p+\de=1$ for $p$ gives $p = \frac{1-\de}{e^{\ep}}\geq \frac{1-\de}{1+e^{\ep}}$. Combining all of these comparisons, we see that 1) is equivalent to 2).

Before we justify the equivalence of 2) and 3), we argue the following claim. Let $\phi(x)$ be defined as in 3). Then $\phi(x)\leq\frac{1-\de}{1+e^{\ep}}$ if and only if $F(x)\leq\frac{1}{1+e^{\ep}}$. Suppose that $\phi(x) \leq \frac{1-\de}{1+e^{\ep}}$. Then $\frac{F(x)-q/2}{1-q} \leq \frac{1-\de}{1+e^{\ep}}$. Thus,
\begin{align*}
F(x)&\leq \frac{(1-q)(1-\de)}{1+e^{\ep}} + \frac q2\\
 &=\frac{1}{1+e^{\ep}}\l((1-q)(1-\de) + \l(\frac{b+1}{b}\r) \frac{q}{2}\r)\\
 &=\frac{1}{1+e^{\ep}} \l(\frac{(1-b)(1-\de)}{1-b+2\de b} + \l(\frac{b+1}{b}\r)\frac{\de b}{1-b+2\de b}\r)\\
 &=\frac{1}{1+e^{\ep}} (1-b+2 \de b)^{-1}((1-b)(1-\de) + (b+1)\de)\\
 &=\frac{1}{1+e^{\ep}}.
 \end{align*}
We are now ready to show that $\phi(x)$ as described in 3) fits the form of 2). 
\begin{itemize}
\item Suppose that $0<\phi(x)<\frac{1-\de}{1+e^{\ep}}$. By the above, we know that $F(x)\leq \frac{1}{1+e^{\ep}}$. By Lemma \ref{RecurrenceLem1},
  \begin{align*}
    e^\ep \phi(x) + \de = \frac{e^{\ep} F(x) - \frac{q}{2b}}{1-q} + \de
&=\frac{F(x+1) - \frac{q}{2}}{1-q} + \frac{\frac{q}{2}-\frac{q}{2b}}{1-q}+\de\\
&=\phi(x+1) + \frac{\de b}{1-b}\l(1-\frac{1}{b}\r) + \de
    =\phi(x+1).
    \end{align*}
\item Suppose that $\frac{1-\de}{1+e^{\ep}}<\phi(x) \leq 1-\de$. Then we have $F(x) >\frac{1}{1+e^{\ep}}$. Then 
  \begin{align*}
    1-e^{-\ep}(1-\phi(x)) + e^{-\ep} \de
&=1-e^{-\ep}\l(1-\frac{F(x) - q/2}{1-q}\r) + e^{-\ep} \de\\
&=(1-q)^{-1}\l(1-q - e^{-\ep} \l( 1-F(x) - q/2\r)\r) + e^{-\ep} \de\\
&= (1-q)^{-1}(1-e^{-\ep}(1-F(x)) + bq/2-q) + b\de\\
&= (1-q)^{-1}(F(x+1) - q/2) + \frac{(b-1)q/2}{1-q} + b\de\\
&= \phi(x+1) + \frac{\de b(b-1)}{1-b} + b\de\\
&= \phi(x+1).
  \end{align*}
\item Finally, we must show that if $\phi(x)=1$ then $\phi(x-1)\geq 1-\de$. It suffices to show that $F(x)\geq 1-q/2$ implies that $F(x-1) \geq (1-\de)(1-q)+q/2=1-(1/b)(q/2)$. We prove the contrapositive. Suppose that $F(x-1)<1-(1/b)(q/2)$. Then since $F$ satisfies property \eqref{DP3}, we know that
  \begin{align*}
    F(x) \leq 1-e^{-\ep}(1-F(x-1))&< 1-b(1-(1-(1/b)(q/2)))\\
    &= 1-b(1-1+(1/b)(q/2)) = 1-q/2.\end{align*}
    \end{itemize}

We have justified that $\phi(x)$ in 3) satisfies the recurrence relation in 2). Given $\phi'$ of the form in 2), with first non-zero entry at $y$, by Lemma \ref{CLem} and Intermediate Value Theorem, we can find $m\in \RR$ such that $\phi(y)=\phi'(y)$. We conclude that 1), 2), and 3) are all equivalent.
\end{proof}

\begin{proof}[Proof of Corollary \ref{OneSide1}.]
  First we show that $\phi^*$ is UMP-$\al$ for $H_0: \ta\leq \ta_0$ versus $H_1: \ta>\ta_0$. Since $\phi^*(x)$ is increasing and $\mathrm{Binom}(n,\ta)$ has a monotone likelihood ratio in $\ta$, $E_\ta \phi^*\leq E_{\ta_0}\phi^*=\al$ for all $\ta\leq \ta_0$ (property of MLR). By Theorem \ref{UMP1}, we know that $\phi^*(x)$ is most powerful for any alternative $\ta_1>\ta_0$ versus the null $\ta_0$. So, $\phi^*$ is UMP-$\al$.

  Next we show that $\psi^*$ is UMP-$\al$ for $H_1: \ta\geq \ta_0$ versus $H_1: \ta<\ta_0$. First note that $\sup_{\ta\geq\ta_0} \EE_{\ta} \psi^*=\al$. Let $\psi$ be another test with $\sup_{\ta\geq \ta_0}\EE_{\ta} \psi\leq \al$. Let $\ta_1<\ta_0$, we will show that $\EE_{\ta_1} \psi^*\geq \EE_{\ta_1} \psi$. Define $\twid \psi^*(x) = \psi^*(n-x) = 1-F_{N_0}(n-x-m_2) =F_{N_0}(x+m_2-n)$ and $\twid \psi(x) =\psi(n-x)$.
  Then using the map $(x,\ta) \mapsto (n-x,1-\ta)$, we have that $\EE_{X\sim (1-\ta_0)} \twid \psi^*(X) = \EE_{X\sim (1-\ta_0)} \psi^*(n-X)=\EE_{Y\sim \ta_0}\psi^*(Y)=\al$. By a similar argument for $\psi$, we have that both $\twid \psi^*$ and $\twid \psi$ are level $\al$ for $H_0: \ta\leq 1-\ta_0$ versus $H_1: \ta>1-\ta_0$. Since $\EE_{(1-\ta_0)}\twid\psi^* = \al$, and $\twid \psi^*(x) =F_{N_0}(x-m')$, we have that $\twid \psi^*$ is UMP-$\al$ for $H_0: \ta\leq (1-\ta_0)$ versus $H_1: \ta>(1-\ta_0)$. Then for $\ta_1<\ta_0$, 
  \[\EE_{X\sim \ta_1} \psi^*(X) = \EE_{Y\sim (1-\ta_1)} \twid\psi^*(Y)
    \geq \EE_{Y\sim (1-\ta_1)} \twid \psi(Y) = \EE_{X\sim \ta_1} \psi(X).\]
  We conclude that $\psi^*$ is UMP-$\al$ for $H_1: \ta\geq \ta_0$ versus $H_1: \ta<\ta_0$.\qedhere
\end{proof}

\begin{lem}\label{CDFLem}
  Observe $\ul x\in \mscr X^n$. Let $T: \mscr X^n \rightarrow \RR$, and let $\{\mu_{\ul x}\mid \ul x\in \mscr X^n\}$ be a set of probability measures on $\RR$, dominated by Lebesgue measure. Suppose that $\mu_{\ul x}$ is parameterized by $T(\ul x)$ and $\mu_{\ul x}$ has MLR in $T(\ul x)$. Then $\{\mu_{\ul x}\}$ satisfies $(\ep,\de)$-DP if and only if for all $H(\ul x_1,\ul x_2)=1$ and all $t\in \RR$,
  \begin{align}
    \mu_{\ul x_1}((-\infty,t)) &\leq e^{\ep} \mu_{\ul x_2}((-\infty,t))+\de,\\
\mu_{\ul x_1}((t,\infty))&\leq e^{\ep} \mu_{\ul x_2}((t,\infty)) + \de.
  \end{align}
\end{lem}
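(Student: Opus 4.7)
The forward direction is immediate: if $\{\mu_{\ul x}\}$ satisfies $(\ep,\de)$-DP, then the stated half-line inequalities are simply the DP condition specialized to sets of the form $(-\infty,t)$ and $(t,\infty)$.

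For the reverse direction, my plan is a Neyman--Pearson style trimming argument. Fix neighboring $\ul x_1,\ul x_2$ with $H(\ul x_1,\ul x_2)=1$, and let $f_1,f_2$ denote the Lebesgue densities of $\mu_{\ul x_1},\mu_{\ul x_2}$. Without loss of generality assume $T(\ul x_1) \geq T(\ul x_2)$; the opposite case is symmetric, and equality is trivial since the two measures then coincide by the parameterization hypothesis. MLR in $T(\ul x)$ then says that the likelihood ratio $f_1/f_2$ is monotone non-decreasing on the support.

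To establish $\mu_{\ul x_1}(B) \leq e^\ep \mu_{\ul x_2}(B)+\de$ for an arbitrary measurable $B \subset \RR$, I would introduce the super-level set $E = \{y : f_1(y) > e^\ep f_2(y)\}$. By monotonicity of $f_1/f_2$, $E$ is, up to a Lebesgue-null set, a right half-line of the form $(c,\infty)$. The pointwise observation $(\mathbf 1_E - \mathbf 1_B)(f_1 - e^\ep f_2) \geq 0$ (non-negative on $E$ because both factors are $\geq 0$ there, and non-negative on $E^c$ because both factors are $\leq 0$) integrates to
\[
\mu_{\ul x_1}(B) - e^\ep \mu_{\ul x_2}(B) \;\leq\; \mu_{\ul x_1}(E) - e^\ep \mu_{\ul x_2}(E),
\]
and the right-hand side is at most $\de$ by the assumed half-line inequality applied to $E=(c,\infty)$. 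For the other DP direction $\mu_{\ul x_2}(B) \leq e^\ep \mu_{\ul x_1}(B)+\de$, I would run the same trimming with the sub-level set $E' = \{y : e^\ep f_1(y) < f_2(y)\}$, which by MLR is a left half-line $(-\infty,c')$, and invoke the assumed inequality on left half-lines.

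I do not expect a serious obstacle here; MLR is exactly the structural hypothesis that reduces the optimization of $\mu_{\ul x_1}(B) - e^\ep \mu_{\ul x_2}(B)$ over arbitrary measurable sets to optimization over half-lines, which is what the lemma is really asserting. The only bookkeeping concern is that $E$ may differ from a genuine half-line on the Lebesgue-null set $\{f_1/f_2 = e^\ep\}$, but since both $\mu_{\ul x_i}$ are absolutely continuous this discrepancy does not affect $\mu_{\ul x_i}(E)$, nor does it affect the validity of the half-line inequality at $E$.
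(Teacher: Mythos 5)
Your proof is correct and is essentially the paper's argument: both reduce the DP condition on arbitrary measurable sets to the stated half-line inequalities via a Neyman--Pearson-type argument, with MLR identifying the extremal sets as half-lines. The only difference is one of packaging: the paper fixes the level $\al=\mu_{\ul x_2}(B)$ and invokes the classical Neyman--Pearson lemma (hence its remark that continuity of $t\mapsto \mu_{\ul x_2}((-\infty,t))$ is needed to attain level $\al$ exactly), whereas you maximize the functional $B\mapsto \mu_{\ul x_1}(B)-e^{\ep}\mu_{\ul x_2}(B)$ directly via its super-level set, which sidesteps that attainability step.
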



\begin{proof}[Proof of Lemma \ref{CDFLem}.]
  Let $\al \in [0,1]$ be given. We will only consider $B\subset \RR$ (Lebesgue measurable) such that $\mu_{\ul x_2}(B)=\al$. Then demonstrating $(\ep,\de)$-DP requires $\sup\limits_{\{B\mid \mu_{\ul x_2}(B)=\al\}} \mu_{\ul x_1}(B) \leq e^{\ep} \al + \de$. We interpret this problem as testing the hypothesis $H_0: \ul x=\ul x_2$ versus $H_1: \ul x=\ul x_1$, using the rejection region $B$, where $\al$ is the type I error, and $\mu_{\ul x_1}(B)$ is the power. We know that $\sup\limits_{\{B\mid \mu_{\ul x_2}(B)=\al\}} \mu_{\ul x_1}(B)$ is achieved by the Neyman-Pearson Lemma. Since $\mu_{\ul x}$ has an MLR in $T(\ul x)$, $\arg\sup_{\{B\mid \mu_{\ul x_2}(B)=\al\}} \mu_{\ul x_1}(B)$ is either of the form $(-\infty,t)$ or $(t,\infty)$, depending on whether $T(\ul x_1)$ is greater or lesser than $T(\ul x_2)$. Since $\mu_{\ul x_1}$ is dominated by Lebesgue measure for all $\ul x_1$, $\mu_{\ul x_2}((-\infty,t))$ is continuous in $t$, which allows us to achieve exactly $\al$ type I error.
\end{proof}

\begin{proof}[Proof of Theorem \ref{TulapDP}.]
Let $Z \sim \mathrm{Tulap}\l(T(x),b=e^{-\ep},\frac{2\de b}{1-b+2\de b}\r)$. We know that the distribution of $Z$ is symmetric with location $T(x)$, and the pdf $f_Z(t)$ is increasing as a function of $|t-T(x)|$. It follows that $f_Z(t)$ has a MLR in $T(x)$. By Lemma \ref{RecurrenceLem2}, we know that $\phi(x)=F_Z(m)$ satisfies \eqref{DP1}-\eqref{DP4}, so by Lemma \ref{CDFLem}, we have the desired result. 
\end{proof}


\begin{proof}[Proof of Theorem \ref{PValueResult}.]
  We denote by $F_{Z\sim \ta_0}(\cdot)$ the cdf of the random variable $Z$, distributed as $Z\mid X\sim \mathrm{Tulap}(X,b,q)$ and $X\sim \mathrm{Binom}(n,\ta_0)$.
  \begin{enumerate}
  \item First we show that $p(\ta_0,Z)$ is a $p$-value, according to Definition \ref{def:pvalue}. 
    To this end, consider
    
    \begin{align*}
      \sup_{\ta\leq \ta_0} P_{\substack{Z| X \sim \mathrm{Tulap}(X,b,q)\\X\sim \mathrm{Binom}(n,\ta)}}(p(\ta,Z) \leq \al)
&= P_{\substack{Z| X \sim \mathrm{Tulap}(X,b,q)\\X\sim \mathrm{Binom}(n,\ta_0)}}(p(\ta_0,Z)\leq \al)\\
    \end{align*}
    using the fact that $X$ has a monotone likelihood ratio in $\ta$. Note that $p(\ta_0,Z) =1-F_{Z\sim \ta_0}(Z)$.
    When $X\sim \mathrm{Binom}(n,\ta_0)$, we have that $p(\ta_0,Z) = 1-F_{Z\sim \ta_0}(Z)\sim \mathrm{Unif}(0,1)$. So,
\[P_{\substack{Z| X \sim \mathrm{Tulap}(X,b,q)\\X\sim \mathrm{Binom}(n,\ta_0)}}(p(\ta_0,Z)\leq \al)
  =P_{U\sim \mathrm{Unif}(0,1)}( U\leq \al)=\al.\]
\item Let $N\sim \mathrm{Tulap}(0,b,q)$, and recall from Theorem \ref{UMP2} that the UMP-$\al$ test for $H_0: \ta\leq \ta_0$ versus $H_1: \ta>\ta_0$ is $\phi^*(x) = F_N(x-m)$, where $m$ satisfies $E_{\ta_0}\phi^*(x) =\al$. We can write $\phi^*$ as 
  \begin{align*}
    \phi^*(x) = F_N(x-m)
&= P_{N\sim \mathrm{Tulap}(0,b,q)}(N\leq X-m\mid X)\\
&= P_N(X+N\geq m\mid X)
= P_{Z| X \sim \mathrm{Tulap}(X,b,q)} (Z\geq m\mid X)
  \end{align*}
where $m$ is chosen such that 
\begin{align*}
\al = E_{X\sim \ta_0} \phi^*(X)
 &= E_{X\sim \ta_0} P_{Z| X \sim \mathrm{Tulap}(X,b,q)}(Z\geq m \mid X) \\
&= P_{\substack{Z| X\sim \mathrm{Tulap}(X,b,q)\\X\sim \mathrm{Binom}(n,\ta_0)}} (Z\geq m)
=1-F_{Z\sim \ta_0}(m),
\end{align*}
where $F$ is the cdf of the marginal distribution of $Z$, where $Z| X \sim \mathrm{Tulap}(X,b,q)$ and $X\sim \mathrm{Binom}(n,\ta_0)$. From this equation, we have that $m$ is the $(1-\al)$-quantile of the marginal distribution of $Z$. 

Let $R| X \sim \mathrm{Bern}(\phi^*(X))$ and $Z|X\sim \mathrm{Tulap}(X,b,q)$. Then 
\begin{align*}
  R| X\overset d = I(Z\geq m) \mid X&\overset d=I(F_{Z\sim \ta_0}(Z) \geq F_{Z\sim \ta_0}(m))\mid X \\
  &\overset d=I\l(1-\al \leq F_{Z\sim \ta_0} (Z)\r)  | {X}\\
&\overset d= I\l( p(\ta_0,Z)\leq \al\r)|X.\\
\end{align*}
Taking the conditional expected value $\EE(\cdot\mid X)$  of both sides gives
\[\phi^*(x) = E(R\mid X) = P_{Z| X\sim \mathrm{Tulap}(X,b,q)}(p(\ta_0,Z)\leq \al\mid X).\]
\item Let $p'(X)$ be any other $(\ep,\de)$-DP $p$-value, and let $\ta_1>\ta_0$. We wish to show that
  \[P_{X\sim \ta_1,N}(p(\ta_0,X+N)\leq \alpha)\geq P_{X\sim \ta_1}(p'(X)\leq \alpha)\]
  However, the left side is just $\EE_{X\sim \ta_1}\phi^*(X)$, the power of the DP-UMP test, and the right is the power of the corresponding DP test of $p'$. Since $\phi^*$ is uniformly most powerful among $(\ep,\de)$-DP tests, the inequality is justified.

\item We can express $p(\ta_0,Z)$ in the following way:
  \begin{align*}
    p(\ta_0,Z)&= P_{\substack{X\sim \mathrm{Binom}(n, \ta_0)\\ N\sim \mathrm{Tulap}(0,b,q)}} (X+N \geq Z)
= P_{X,N}(-N\leq X-Z)\\
&= E_{X\sim \mathrm{Binom}(n, \ta_0)} P_{N}(N\leq X-Z\mid X)
= E_{X\sim \mathrm{Binom}(n,\ta_0)}F_N(X-Z)\\
&= \sum_{x=0}^n F_N(x-Z)\binom nx \ta_0^x(1-\ta_0)^{n-x},
  \end{align*}
which is just the inner product of the vectors $\ul F$ and $\ul B$ in algorithm \ref{PValueAlgorithm}.\qedhere
  \end{enumerate}
\end{proof}

\begin{proof}[Proof of Theorem \ref{UMPU}.]
  We must show that there exists $k$ and $m$ which solve the two equations, and then argue that $\phi^*$ is UMP among all level $\al$ tests in $\mscr D_{\ep,\de}^n$. The proof is inspired by the Generalized Neyman Pearson Lemma \citet[Theorem 3.6.1]{Lehmann2008}, and has a similar strategy as Theorem \ref{OneSide1}.

  Let $\ta_1 \neq \ta_0$. We will show that $\phi*$ is most powerful among unbiased size $\al$ tests in $\mscr D_{\ep,\de}^n$ for testing $H_0: \ta=\ta_0$ versus $H_1: \ta=\ta_1$.  Set $f_1(x) = \binom nx \ta_0^x(1-\ta_0)^{n-x}$, $f_2(x) = (x-n\ta_0) \binom nx \ta_0^x(1-\ta_0)^{n-x}$, and $f_3(x) = \binom nx \ta_1^x(1-\ta_1)^{n-x}$. Let $\phi\in \mscr D_{\ep,\de}^n$ be any unbiased size $\alpha$ test, not identical to $\phi^*$.

  \begin{enumerate}
  \item There exists $k,m\in \RR$ such that $\phi^*$ satisfies  $\EE_{X\sim \ta_0} (X-n\ta_0) \phi(X)=0$ and $\EE_{X\sim \ta_0} \phi(X) = \al$.
    \begin{proof}
      Set $g_1(k,m) = \EE_{X\sim\ta_0}\phi^*(X) - \al$ and $g_2(k,m) = \EE_{X\sim \ta_0}(X-n\ta_0) \phi^*(X)$. We need to show that there exists $k$ and $m$ such that $g_1(k,m)=g_2(k,m)=0$.



      Then, $g_2$ partitions $\RR^2$ into two disjoint regions: the pairs $(k,m)$ such that $g_2\leq 0$ and those such that $g_2>0$. We claim that the solutions to $g_1=0$ form a curve. To see this, notice that for any $k\in \RR$, there exists a unique $m\in \RR$ such that $g_1(k,m)=0$. In particular, when $k=-1$, and $m(0)$ is the value such that $g_1(0,m(0))=0$, then $g_2(0,m(0))>0$ (check). Similarly, for $k=n+1$, $g_2(n,m(n))<0$.

      Since, both $g_1$ and $g_2$ are continuous functions, by the intermediate value theorem, there exists $-1\leq k\leq n+1$ and $m\in \RR$ such that $g_1(k,m)=g_2(k,m)=0$. 
    \end{proof}

  \item  Since $\phi$ is unbiased, it's power must have a local minimum at $\ta_0$ so, $\frac d {d\theta} \EE_\ta \phi\Big|_{\ta=\ta_0}=0$. This is equivalent to requiring that $\sum \phi f_2=0$.
    \begin{proof}
      We calculate the derivative of the power:
      \begin{align*}
        \frac{d}{d\theta} \beta_\phi&= \frac d {d\theta} \sum_{x=0}^n \binom nx \ta^{x}(1-\ta)^{n-x}\phi(x)\\
                                    &=\sum_{x=0}^n \binom nx \ta^x(1-\ta)^{n-x} \phi(x)\l( \frac{x}{\ta} + \frac{x-n}{1-\ta}\r)\\
                                    &=\frac{1}{\ta(1-\ta)} \EE_{\ta}(X-n\ta)\phi(X)\\
        &= \frac{1}{\ta(1-\ta)} \sum \phi f_2
      \end{align*}
    \end{proof}
    
  \item 
    There exists $y_l \leq k\leq y_u$ (integers) such that $\phi^*(x) \geq \phi(x)$ when $x\geq y_u$ or $x\leq y_l$, and $\phi^*(x)\leq \phi(x)$ when $y_l\leq x\leq y_u$.
    \begin{proof}
      Since $\phi$ is not identical to $\phi^*$, there exists $x\in\{0,\ldots, n\}$ such that $\phi^*(x) \neq \phi(x)$. 
      If $\phi^*(x)> \phi(x)$ for all $0\leq x\leq n$, then set $y_l=\floor(k)$ and $y_u=\ceil(k)$. If $\phi^*(x)\leq\phi(x)$ for all $0\leq x\leq n$, then it cannot be that $\phi$ is size $\al$. We conclude that there exists a value $y$ such that $\phi^*(y)> \phi(y)$. If $y>k$, then for every $x\geq y$, $\phi^*(x)> \phi(x)$, since $\phi^*$ increases as much as possible. Alternatively, if $y<k$ then for all $x\leq y$, $\phi^*(x)< \phi(x)$. So, we have that either $y_l\leq k$ exists or $y_u\geq k$ exists. We need to show that both exist.

      Suppose without loss of generality that $y_u\geq k$ exists, and suppose to the contrary that $y_l\leq k$ does not exist. Then it is the case that $\phi^*(x)> \phi(x)$ when $x\geq y_u$ and $\phi^*(x)\leq \phi(x)$ when $x<y_u$. Notice that $\frac{f_2(x)}{f_1(x)} = x-n\ta_0$. So, $\phi^*(x)\geq \phi(x)$ if and only if $f_2(x)\geq cf_1(x)$ for the constant $c=y_u-\frac 12 -n\theta_0$. Then for all $x=0,1,2,\ldots, n$,
      \[  (\phi^*(x) - \phi(x))(f_2(x) - cf_1(x))\geq 0,\]
      and $(\phi^*(y_u) - \phi(y_u))(f_2(y_u) - cf_1(y_u))>0$. 
      Summing over $x$ gives
      \begin{align*}
        \sum_{x=0}^n (\phi^*(x) - \phi(x))(f_2(x) - cf_1(x))&>0\\
        \sum \phi^*f_2 - \sum \phi f_2 - c\sum \phi^*f_1 + c\sum \phi f_1&>0\\
        -\sum \phi f_2&> 0\\
        \sum \phi f_2&<0,
      \end{align*}
      We see that $\phi$ is not unbiased, contradicting our initial assumption. We conclude that both $y_l$ and $y_u$ exist.
      
    \end{proof}
  \item There exists $k_1,k_2\in \RR$ such that $f_3(x)\geq k_1f_1(x) + k_2f_2(x)$ when $x\not\in (y_l,y_u)$ and $f_3(x)\leq k_1f_1(x) + k_2f_2(x)$ when $x\in (y_l,y_u)$.
    \begin{proof}
      We need to consider what forms the set $\{x\mid f_3(x)\geq k_1f_1(x)+k_2f_2(x)\}$ can take on. These are solutions to
      \begin{align*}
        1&\geq \frac{k_1f_1(x)+k_2f_2(x)}{f_3(x)}\\
        1&\geq (k_1+k_2(x-n\ta_0)) \frac{(1-\ta_0)^n}{(1-\ta_1)^n} \frac{\ta_0^x(1-\ta_0)^x}{\ta_1^x(1-\ta_1)^x}\\
        \left(\frac{1-\ta_1}{1-\ta_0}\right)^n\left( \frac{\ta_1(1-\ta_1)}{\ta_0(1-\ta_0)}\right)^x
        &\geq k_1-k_2n\ta_0 + k_2x.
      \end{align*}
      The left side is either convex or constant. The right side is linear. If the left is strictly convex (when $\ta_1 \neq 1-\ta_0$), we can always choose $k_1$ and $k_2$ such that the set of solutions is of the form $(-\infty,y_l]\cup[y_u,\infty)$. If $\ta_1 = 1-\ta_0$, set $k_1 = \l(\frac {1-\ta_1}{1-\ta_0}\r)^n$ and $k_2=0$.
    \end{proof}
  \item The test $\phi^*$ is more powerful than $\phi$ at any $\ta_1$.
    \begin{proof}
      We have established that $\phi^*(x)\geq \phi(x)$ for $f_3(x)\geq k_1f_1(x)+k_2f_2(x)$ and $\phi^*(x)\leq \phi(x)$ for $f_3(x) \leq k_1f_1(x)+k_2f_2(x)$. Then
g      \[(\phi^*(x) - \phi(x))(f_3(x)-k_1f_1(x)-k_2f_2(x))\geq 0\]
      Then
      \begin{align*}
        \sum_{x=0}^n (\phi^*(x) - \phi(x))(f_3(x)-k_1f_1(x)-k_2f_2(x))&\geq0\\
        \sum_{x=0}^n \phi^*(x)f_3(x)-\sum_{x=0}^n \phi(x) f_3(x) &\geq 0\\
        \EE_{X\sim \ta_1}\phi^*(X)\geq \EE_{X\sim \ta_1}\phi(X).
      \end{align*}
      Since our argument does not depend on the choice of $\ta_1$, we conclude that $\phi^*$ is more powerful than any other size $\al$ unbiased test in $\mscr D_{\ep,\de}^n$. Finally, noting that by taking $\phi(x)\defeq \al$, we see that $\phi^*$ is indeed unbiased. Hence, $\phi^*$ is the UMP-$\al$ among unbiased tests in $\mscr D_{\ep,\de}^n$. 
    \end{proof}
  \end{enumerate}
\end{proof}

\begin{proof}[Proof of Corollary \ref{UMPUhalf}.]
  We have to show that when using $k=\frac n2$, $\phi$ is unbiased. Call $A = \EE_{X\sim 1/2} (X-\frac n2) \phi(X)$. Then
  \begin{align*}
    A&= \l(\frac 12 \r)^{n}\sum_{x=0}^n \binom nx \l(x-\frac n2\r) \phi(x)\\
     &= \l(\frac 12 \r)^n \sum_{y=0}^n \binom ny \l(\frac n2 -y\r) \phi(n-y)\\
     &= \l(\frac 12 \r)^n \sum_{y=0}^n \binom ny \l(\frac n2 - y\r) \phi(y)\\
    &=-A
  \end{align*}
  where we made the substitution $y=n-x$, and used the fact that both $\binom{n}{\cdot}$ and $\phi(\cdot)$ are symmetric about $k=\frac n2$. We see that $A = -A$, which implies that $A = 0$. 
\end{proof}

\begin{proof}[Proof of Proposition \ref{prop:Nearly}.]
  Call $p^*(\ta_0, Z)$ the output of Algorithm \ref{NearlyUnbiasedPvalue}. First we will understand the distribution of $p^*(\ta_0, Z)$ when $\ta = \ta_0$:
  \begin{align*}
    p^*(\ta_0, Z)
    &=p(\ta_0, T+n\ta_0) + 1-p(\ta_0, n\ta_0-T)\\
    &=P_{Z\sim \ta_0}(Z\geq T+n\ta_0) + P_{Z\sim \ta_0}(Z\leq n\ta_0-T)\\
    &=P_{Z\sim \ta_0} (Z-n\ta_0\geq T \text{ or } Z-n\ta_0\leq -T)\\
    &= P_{Z\sim \ta_0}(|Z-n\ta_0|\geq T)\\
    &=1-F_T(T)\\
    &\sim U(0,1).
  \end{align*}

  Since $p^*(\ta_0, Z) \sim U(0,1)$, we have that
  $P_{\ta_0}(p^*(\ta_0, Z)\geq \al) = \al.$ 
 The $p$-value satisfies $(\ep,\de)$-DP since it is a post-processing of $Z$.
\end{proof}

\begin{proof}[Proof of Proposition \ref{prop:Unbiased}.]
  In the full proof of Theorem \ref{UMPU}, we saw that if $\phi$ is of the form in Theorem  \ref{UMPU} and  $\EE_{\ta_0} (X-n\ta_0)\phi(X)=0$, then $\phi$ is unbiased. Let $\phi$ be the test in Proposition \ref{prop:Nearly}. Then it suffices to show that $\lim_{n\rightarrow \infty} \EE_{\ta_0}\frac{X-n\ta_0}{\sqrt {n\ta_0(1-\ta_0) }}\frac{\phi(X)}{\sqrt n} = 0$. We begin by recalling that if $X\sim \mathrm{Binom}(n,\ta_0)$ then by the Central Limit Theorem, we have that
  \[\frac{X-n\ta_0}{\sqrt{n\ta_0(1-\ta_0)}} \overset d\rightarrow N(0,1).\]
Using this we have
  \begin{align*}
    \lim_{n\rightarrow \infty} \EE_{\ta_0}\frac{X-n\ta_0}{\sqrt {n\ta_0(1-\ta_0) }}\frac{\phi(X)}{\sqrt n}
    &=\lim_{n\rightarrow \infty} \EE_{Z\sim N(0,1)} Z \frac{\phi(Z\sqrt{n\ta_0(1-\ta_0)} + n\ta_0)}{\sqrt n}\\
    &= \lim_{n\rightarrow \infty} \EE_{Z\sim N(0,1)} Z \phi'(Z) 
  \end{align*}
  where
  \[\phi'(z)= \frac{1}{\sqrt n} \phi(z\sqrt{n\ta_0(1-\ta_0)} + n\ta_0)
    =\frac{1}{\sqrt n } \begin{cases}
      F_N(z\sqrt {n\ta_0(1-\ta_0)} - m)&\text{if } z\geq 0\\
      F_N(-z\sqrt {n \ta_0(1-\ta_0)} - m)&\text{if } z<0
    \end{cases}\]
  (We are assuming that $k = n\ta_0$) Notice that $\phi'$ is symmetric about $0$. So,
  \begin{align*}
    \EE_{Z\sim N(0,1)} Z\phi'(Z)
    &= \EE(Z\phi'(Z)I(Z\leq 0) + \EE(Z\phi'(Z)I(Z\geq 0))\\
                               &=-\EE(Z\phi'(Z)I(\geq 0) + \EE(Z\phi'(Z)I(Z\geq 0))\\
                               &=0
  \end{align*}
\end{proof}

\begin{proof}[Proof of Proposition \ref{UMAU}.]
  It is easy to verify that $C^*$ is unbiased, and has the appropriate coverage. Suppose to the contrary that $C^*$ is not UMA among DP unbiased confidence intervals. Then there exists two values $\ta\neq \ta_0$ and another DP confidence interval $C'$, which is unbiased with coverage $(1-\alpha)$ such that
  \[P_\ta(\ta_0 \in C')<P_{\ta}(\ta_0\in C^*)]\]
  or equivalently,
  \begin{equation}\label{FalseCoverage}
    P_\ta(\ta_0 \not \in C')>P_\ta(\ta_0 \not \in C^*).
  \end{equation}
  At this point, note that $\phi'(x) = P(\ta_0\not \in C'\mid X)$ is a size $\alpha$, unbiased DP test for $H_0: \ta=\ta_0 $ versus $H_1: \ta\neq \ta_0$. Furthermore, $\phi^*(x) = P(\ta_0 \not \in C^*\mid X)$ is the size $\alpha$ DP-UMPU test from Theorem \ref{UMPU}. But then equation \eqref{FalseCoverage} is equivalent to $\EE_\ta \phi'>\EE_\ta\phi^*$, which implies that $\phi^*$ is not the DP-UMPU. 
\end{proof}

  \begin{proof}[Proof of Corollary \ref{BonferroniCI}.]
    Suppose to the contrary that there exist $\ta_0\neq \ta_1$ and a $(\ep,\de)$-DP confidence interval $C'$ with coverage $1-\alpha/2$ such that
    \[P_{\ta_1}(\ta_0 \in C')<P_{\ta_1}(\ta_0 \in C^1_\alpha),\]
    or equivalently,
    \begin{equation}\label{falseCoverage1}
      P_{\ta_1}(\ta_0 \not \in C')>P_{\ta_1}(\ta_0 \not \in C^1_\alpha).
      \end{equation}
    We can then construct two hypothesis tests $\phi^1(x) = P(\ta_0 \not \in C^1_\alpha(x)\mid x)$, and $\phi'(x) = P(\ta_0 \not \in C'(x)\mid x)$ for $H_0: \ta = \ta_0$ versus $H_1: \ta \neq \ta_0$. Note that $\phi^1(x)$ is the test from Proposition \ref{prop:Bonferroni} at size $\alpha$, and $\phi'$ has size $\alpha/2$. Now,  \eqref{falseCoverage1} implies that $\EE_{\ta_1}\phi' >\EE_{\ta_1}\phi^1_\alpha$, which contradicts Proposition \ref{prop:Bonferroni}, which states that  $\phi'$ must be uniformly more powerful than $\phi'$. 
  \end{proof}
\end{document}